\newcommand{\R}{\mathbf{R}}
\newcommand{\x}{\mathbf{x}}
\newcommand{\tx}{\tilde{\mathbf{x}}}
\newcommand{\f}{\mathbf{f}}
\newcommand{\xx}{\mathbb{x}}
\newcommand{\dmat}{\mathbf{d}}
\newcommand{\tht}{{\scriptscriptstyle{HT}}}
\newcommand{\tas}{{\scriptscriptstyle{AS}}}
\newcommand{\mse}{{\scriptscriptstyle{M}}}
\newcommand{\tl}{{\scriptscriptstyle{L}}}
\newcommand{\tm}{{\scriptscriptstyle{M}}}
\newcommand{\tr}{{\scriptscriptstyle{R}}}
\newcommand{\matc}{\mathbf{c}}
\newcommand{\matp}{\mathbf{p}}
\newcommand{\m}{\mathbf{m}}
\newcommand{\e}{\mathbf{e}}
\newcommand{\colsxx}{l}  
\newcommand{\I}{\mathbf{i}}
\newcommand{\E}{\text{E}}
\newcommand{\V}{\text{V}}
\newcommand{\hththt}{3HT!}
\newcommand{\rara}{2R!}
\newcommand{\bfI}{\text{\textnormal{I}}}
\newcommand{\thththt}{\scriptscriptstyle{\emph{\textbf{3}}}\scriptscriptstyle{HT} \hspace{-.2mm}{\scriptscriptstyle \emph{\textbf{!}}}}
\newcommand{\trara}{\scriptscriptstyle{\emph{\textbf{2}}} \scriptscriptstyle{R} \hspace{-.2mm}{\scriptscriptstyle \emph{\textbf{!}}}}
\newcommand{\II}{{\scriptscriptstyle \text{\textnormal{I\hspace{-0.0mm}I}}}}
\newcommand{\sI}{{\scriptscriptstyle \text{\textnormal{I}}}}
\newcommand{\var}{\text{\textnormal{Var}}}
\newcommand{\cov}{\text{\textnormal{Cov}}}
\newcommand{\bpi}{\boldsymbol{\pi}}
\newcommand{\0}{\mathbf{0}}
\newcommand{\dipi}{\boldsymbol{\pi}^{-1}}
\newcommand{\diR}{\mathbf{R}}
\newcolumntype{L}[1]{>{\arraybackslash}p{#1}}
\newcolumntype{C}[1]{>{\centering \arraybackslash}p{#1}}
\providecommand{\@fourthoffour}[4]{#4}
\newcommand\fixstatement[2][\proofname\space of]{%
	\ifcsname thmt@original@#2\endcsname
	\AtEndEnvironment{#2}{%
		\xdef\pat@label{\expandafter\expandafter\expandafter
			\@fourthoffour\csname thmt@original@#2\endcsname\space\@currentlabel}%
		\xdef\pat@proofof{\@nameuse{pat@proofof@#2}}%
	}%
	\else
	\AtEndEnvironment{#2}{%
		\xdef\pat@label{\expandafter\expandafter\expandafter
			\@fourthoffour\csname #1\endcsname\space\@currentlabel}%
		\xdef\pat@proofof{\@nameuse{pat@proofof@#2}}%
	}%
	\fi
	\@namedef{pat@proofof@#2}{#1}%
}
\globtoksblk\prooftoks{1000}
\newcounter{proofcount}
	\edef\next{%
		\noexpand\begin{proof}[\pat@proofof\space\pat@label]%
			\unexpanded\expandafter{\BODY}}%
\def\printproofs{%
	\count@=\z@
	\loop
	\the\toks\numexpr\prooftoks+\count@\relax
	\ifnum\count@<\value{proofcount}%
	\advance\count@\@ne
	\repeat}
\declaretheorem[style=plain,name=Theorem, numberwithin=section]{theorem}
\newtheorem{corollary}{Corollary}[theorem]
\newtheorem{lemma}[theorem]{Lemma}
\newtheorem{algorithm}[theorem]{Algorithm}
\newtheorem{remark}{Remark}
\newtheorem{definition}[theorem]{Definition}
\begin{document}

	\title{A Unified Theory of Regression Adjustment \\ for Design-based Inference \\ 
		\small \vspace{5mm} WORKING PAPER}

	\author{Joel A. Middleton{\footnote{
				Charles and Louise Travers Department of Political Science, \textit{University of California, Berkeley.} \\ \emph{{email:} joel.middleton@gmail.com}}} }
	
\maketitle

\begin{abstract}
Under the Neyman causal model, it is well-known that OLS with treatment-by-covariate interactions cannot harm asymptotic precision of estimated treatment effects in completely randomized experiments. But do such guarantees extend to experiments with more complex designs? This paper proposes a general framework for addressing this question and defines a class of generalized regression estimators that are applicable to experiments of any design. The class subsumes common estimators (e.g., OLS). Within that class, two novel estimators are proposed that are applicable to arbitrary designs and asymptotically optimal. The first is composed of three Horvitz-Thompson estimators. The second recursively applies the principle of generalized regression estimation to obtain regression-adjusted regression adjustment. Additionally, variance bounds are derived that are tighter than those existing in the literature for arbitrary designs. Finally, a simulation study illustrates the potential for MSE improvements.

\end{abstract}

\section{Introduction}


In the analysis of randomized experiments, regression adjustment is common. Even though its classical assumptions are not justified under Neyman's (1923) causal model \citep[see][]{freedman08a,freedman08b}, regression's finite population and asymptotic properties may be nonetheless defensible. For completely randomized experiments for example, \cite{lin} demonstrates that asymptotic precision can not be harmed by ordinary least squares (OLS) regression if treatment-by-covariate interactions are included in the specification. And inference with heteroskedastic-consistent standard errors is asymptotically conservative.

Meanwhile, regression-type adjustments have been recommended for a variety of designs other than complete randomization. Within the framework of the Neyman-Rubin causal model, authors have considered adjustments for cluster-randomized designs \citep{hansenbowers, middletonaronow}, block-randomized designs \citep{atheyimbens}, arbitrary designs \citep{aronowmiddleton15}, two-stage designs with interference \citep{bassefeller17, sinclairetal}, arbitrary/complex designs with interference \citep{aronowsamii17}, and factorial designs \citep{lu}. \cite{atheyimbens} consider block-, cluster- and pair-randomized designs. 

With the notable exception of results for completely randomized designs \citep{lin, bloniarzetal}, however, few recommendations for practice have relied on guarantees of precision gains or claims of optimality under the Neyman-Rubin model, asymptotic or otherwise. Instead, justifications have tended to be heuristic or have relied on ``model assisted" arguments. And while heuristic arguments that regression should help more often than it hurts may be compelling, some amount of analysis is warranted to address concerns about regression adjustments for arbitrary designs of the sort originally raised by \cite{freedman08a, freedman08b}.

As such, a general framework for analysis of regression's properties for any design may be in order. This paper makes several contributions in that direction. First, a proposed framework allows for any design so that expressions can be maximally general. The re-representation of notation will attempt to make derivations compact and straightforward where they would otherwise be difficult. The benefit will be realized especially when expressions for variance are presented and when they are manipulated for the purpose of deriving expressions for optimal regression adjustment. Moreover, a benefit is realized when variance bounding is defined, allowing bounds to be derived that are tighter than those existing in the literature for arbitrary designs.

Second, the proposed class of generalized regression estimators, analogous to estimators in the sampling literature, are applicable to arbitrary designs. It will be demonstrated that the class subsumes common regression practice. Future research may take as a starting point variations on this class, but the proposed class is conceptually useful.


Third, the paper proposes two asymptotically optimal regression estimators that can be applied to arbitrary designs. To develop them, the two key estimation principles presented in the paper, namely, the Horvitz-Thompson principle (inverse propensity of treatment weighting) and the generalized regression principle, are applied recursively. This yields one asymptotically optimal estimator consisting of three Horvitz-Thompson estimators and another that is {regression-adjusted} regression adjustment. While the proposed estimators may have MSE higher than OLS in small samples, future work could introduce refinements.\footnote{However, for the sake of conceptual clarity, this paper seeks to limit the number of estimation principles introduced to the aforementioned two.}  Moreover, the latter may be applicable to larger experiments and may be uniquely optimal for very complex designs. An application could be, for example, experiments in networks where ``exposure" conditions are determined by a complex network structure \citep[e.g.,][]{aronowsamii17}. Sufficiently-large cluster-randomized experiments may be another application. 

Fourth, the paper illustrates how the proposed framework might be used for analysis of specific designs, in particular, completely randomized and cluster randomized designs. In the case of completely randomized designs, results are known \citep[see][]{lin}. However, the proofs are novel, and, as such, they serve as a bridge between existing results and the proposed framework. Moreover, results provided may be useful in the analysis of designs not considered here. 

\subsection{Plan of the paper}

In Section \ref{section.framework} the overall framework is presented. It will establish the context and notation for the discussion of regression adjustment.  In Section \ref{section.regression} a generalized regression estimator is introduced and its connection to regression as commonly used is clarified.  In Section \ref{section.Optimal} two asymptotically optimal estimators that can be applied to any design are proposed. Section $\ref{section.var.est}$ develops variance estimation for HT and generalized regression estimators.  In two subsequent sections, results for specific designs are derived as special cases within the established framework. The specific designs considered are complete randomization and cluster randomization.\footnote{A few comments on block randomization are given in the discussion.} Section \ref{section.sims} uses simulation to illustrate the potential for reductions in MSE using asymptotically optimal regression adjustment.

\section{Framework}\label{section.framework}

This section establishes the overall framework which will be necessary to develop generalized regression adjustment for arbitrary designs.\footnote{Throughout, it should be understood that ``arbitrary designs," is shorthand for ``pretty much any design within reasonable limits."  Limitations include that the design must be identified, i.e., every unit must have some chance of being in treatment and some chance of being in control. Additional limitations are required to ensure asymptotic properties. For example, a design where the treatment group has a fixed number of units as $n\rightarrow \infty$ is outside these limits. Within those limits, the framework is as general as possible.} In the next subsection, average treatment effect (ATE), the causal quantity of interest, is defined in the context of the Neyman-Rubin causal model (NRCM). A two-arm experiment is assumed. While generalization to multiple arms is straightforward, the added notation distracts from the core developments. Throughout the paper, footnotes will attempt to highlight insights about multi-arm extensions.

In the second subsection, the HT estimator is formally introduced.  The HT estimator serves as foundational estimator to which regression adjustments will be made. The estimator has the virtue of being unbiased for the ATE for any identified design, a property which implies that, asymptotically speaking, consistency requires only that its variance goes to zero. By contrast, an alternative estimator such as the difference-of-means could be badly biased and lack consistency in designs where assignment probabilities are unequal.  Moreover, limitations of the HT estimator, namely imprecision and a lack of location invariance, will be addressed by the regression adjustment.\footnote{Another obvious alternative would be to take Hajek estimator as foundational, but the random denominator introduces an inelegance that impedes the illustration of principles. It is also provably true that (under regularity conditions) generalized regression estimators that are HT-based obtain the equal asymptotic variance to Hajek-based counterparts. That said, for smaller samples, the development of a Hajek-based generalized regression estimator might be a worthwhile refinement. However, it is beyond the scope of this paper.} 
Importantly, the variance of the HT estimator will be discussed.  As it will be demonstrated later, this variance expression is directly relevant to variance expressions for the generalized regression estimator. The topic of variance {\it estimation} will be postponed until Section $\ref{section.var.est}$, when an overall strategy can be presented for both HT estimators and generalized regression estimators. 

\subsection{The Neyman Causal Model for treatment-control experiments}

Consider the Neyman-Rubin causal model (NRCM) and imagine a two-arm experiment. For convenience, refer to it as a treatment-control experiment, with one arm being referred to as the treatment and the other control. For the $i^{th}$ unit of the (finite) study population of $n$ units, there are two (fixed) potential outcomes: $y_{0i}$ and $y_{1i}$. Which of $i$'s outcomes is reveled is determined only by $i$'s treatment assignment indicator, $R_{1i}$, which is random, the only random component in the NCM. The researcher observes for each unit the outcome $\left(R_{1i},\hspace{1mm} R_{1i}y_{1i}, \hspace{1mm}(1-R_{1i})y_{0i}, \hspace{1mm} x_i \right)$, where $x_i$ is an additional vector of $k$ covariates which is observed for every unit irrespective of assignment. The parameter of interest is the average treatment effect (ATE), which can be written
\begin{align*}
\delta := & n^{-1}\sum_i \left(y_{1i}-y_{0i}\right).
\end{align*}
The ``fundamental problem of causal inference" \citep{holland} is that only one of the two potential outcomes can be observed for each unit.

For ease of derivations, it makes sense to re-represent the problem in a nonstandard way starting with the potential outcomes. First, represent the entire schedule of potential outcomes as the vector
\begin{align*}
y := & 
\left[ \begin{matrix}
-y_{01} \\ -y_{02} \\ \vdots 
\\ -y_{0n} \\ y_{11}\\ y_{12}\\ \vdots 
\\ y_{1n}
\end{matrix}\right]
\end{align*}
and note that $y$ has length $2n$ and that the control potential outcomes are multiplied by $-1$. This allows the ATE to be represented equivalently as 
\begin{align}\label{ate}
\delta = & n^{-1} {1}'_{{\scriptscriptstyle 2n}} y
\end{align}
where ${1}_{{\scriptscriptstyle 2n}}$ is a column vector of $2n$ ones.\footnote{Alternatively, one could represent a contrast matrix explicitly, but at a notational cost. For example, one could define $\mathbf{c}=\left[\begin{matrix}
-\I & 0 \\ 0 & \I
\end{matrix} \right]$, where $\I$ is an $n \times n$ identity matrix, and write $\mathbf{c} y$ throughout instead of using the more compact $y$ as defined above.  Explicitly representing contrast matrices is useful when generalizing these results to more than two treatment arms.}  This ``stacked" representation of the potential outcomes allows for mathematical manipulation of a single entity, where the canonical frameworks requires manipulation of two entities. The benefits become evident in variance expressions and as covariates are added to the picture. 


If, as above, $R_{1i}$ is the (random) treatment assignment indicator for the $i^{th}$ individual and defining the control assignment indicator as $R_{0i}:=1-R_{1i}$, then the $2n\times2n$ matrix
\begin{align*}
\R :=&
 \left[ \begin{matrix}
R_{01} \\ & R_{02} \\ & & \ddots \\& & &  R_{0n} \\ & & & & R_{11} \\ & & & & & R_{12} \\ & & & & & & \ddots \\ & & & & & & & R_{1n} 
\end{matrix}\right], \hspace{2mm}
\end{align*}
encodes on its diagonal which of the elements of $y$ is observed. In matrix form, the researcher can be said to observe $\R$ and $\R y$, as well as an $n \times k$ matrix of covariates, $\x$, which will be assumed to be zero-centered. Zero-centering will make straightforward the interpretation of intercept terms in regression coefficients, but will otherwise not affect the analysis.

\subsection{The Horvitz-Thompson Estimator}\label{section.ht}



As an experiment, the researcher controls the mechanism that determines the distribution of $\R$.  As such, the researcher is also able to compute quantities such as the probability of treatment assignment, $\pi_{1i} := \E[R_{1i}]$, and the probability of assignment to control, $\pi_{0i} := \E[R_{0i}]$ (either analytically or numerically to arbitrary precision). A $2n\times2n$ diagonal matrix of assignment probabilities can be written
\begin{align*}
\bpi := 
\left[ \begin{matrix}
\pi_{01} \\ & \pi_{02} \\ & & \ddots 
\\ & & & \pi_{0n} \\ & & & & \pi_{11} \\ & & & & & \pi_{12} \\& & & & & &  \ddots 
\\ & & & & & & & \pi_{1n}
\end{matrix}\right].
\end{align*}
Note that by definition $\bpi=\E[\R]$.

Next, the HT estimator of the average treatment effect can be defined as
\begin{align}\label{ht.est}
\widehat{\delta}^\tht := n^{-1} 1'_{{\scriptscriptstyle 2n}}  \bpi^{-1} \R y.
\end{align}
where $1_{{\scriptscriptstyle 2n}}$ is, as above, a column vector of $2n$ ones.\footnote{Equivalently, one could have written 
	\begin{align*}
	\widehat{\delta}^\tht = & n^{-1} 1_{\scriptscriptstyle n}' \bpi_1^{-1} \R_1 y_1- n^{-1} 1_{\scriptscriptstyle n}' \bpi_0^{-1} \R_0  y_0,
	\end{align*}
	an expression that has the pedagogical benefit of clarifying that the estimator is a difference between two HT estimators, one for each treatment arm. However, the more compact convention above streamlines.}{\textsuperscript,}\footnote{To the extent possible, an attempt was made to adhere to the following notational conventions: upper case signifies random variables (e.g., $R_{1i}, \R$); bold signifies a matrix (e.g., $\R, \dmat$); non-subscripted letters are vectors (e.g., $y$) or sometimes scalars (e.g., $n$, $k$, $c$); a subscript of 0 or 1 on a letter typically means the subvector associated with control or treatment outcomes, respectively, (e.g., $y_0$, $y_1$); letters with subscripts of $i, j, k$ or $l$ typically mean scalar elements of a vector or matrix (e.g., $y_{1i}$, $\pi_{0i}$, $\pi_{1i1j}$); and Greek typically signifies a quantity of interest or descriptive summary of a population characteristic (e.g., $\delta$).  An exception to this last rule, for reasons of tradition, is the use of $\pi$ for assignment probability which is not a quantity of interest but a parameter set by the design.}  
The diagonal matrix $\bpi^{-1}$ does the work of weighting the potential outcomes inversely proportional to the probability of being observed.  As such, the HT estimator has the virtue of being unbiased for any identified design, i.e., designs in which $0<\pi_{1i}<1$ for all $i$, because $\E[\R]=\bpi$.

An HT estimator is similar to an inverse propensity of treatment weighted (IPTW) estimator, but the ``propensity score" is known in this setting by way of knowledge of the design of the experiment. Also, the estimator can sometimes be equivalent to the difference-of-means, such as in completely randomized designs.



\subsection{Variance of the Horvitz-Thompson estimator}


To express the variance of a HT estimator of the average treatment effect, first note that in equation (\ref{ht.est}), $y$ can be seen as coefficients on the random vector $1_{\scriptscriptstyle 2n}' \bpi ^{-1} \R $.  Thus, if one defines the $2n\times2n$ ``design" matrix
\begin{align}
\dmat:=\V \left(1_{\scriptscriptstyle 2n}' \bpi ^{-1} \R \right)
\end{align}
where $\V \left(.\right)$ represents variance-covariance, then the variance of the HT estimator of the average treatment effect can be written compactly as
\begin{align}\label{HTvar}
\V\left(\widehat{\delta}^{\tht} \right) = n^{-2} y' \dmat y.
\end{align}
Equivalent, though much more cumbersome expressions are given by \cite{aronowmiddleton15} and \cite{aronowsamii17}.\footnote{For example, \cite{aronowmiddleton15} write the variance of the HT estimator as
	\begin{align*}
	\V\left(\widehat{\delta}^{\tht} \right)=& \sum_i \frac{\pi_{1i}(1-\pi_{1i})}{\pi_{1i}\pi_{1i}}y_{1i}^2+\sum_i\sum_{j \neq i} \frac{\pi_{1i1j}-\pi_{1i}\pi_{1j} }{\pi_{1i}\pi_{1j}}y_{1i}y_{1j} 
	\\& +\sum_i \frac{\pi_{0i}(1-\pi_{0i})}{\pi_{0i}\pi_{0i}}y_{0i}^2+\sum_i\sum_{j \neq i} \frac{\pi_{0i0j}-\pi_{0i}\pi_{0j} }{\pi_{0i}\pi_{0j}}y_{0i}y_{0j}
	\\ & -2\sum_i y_{1i}y_{0i}-2\sum_i\sum_{j \neq i} \frac{\pi_{1i0j}-\pi_{1i}\pi_{0j} }{\pi_{1i}\pi_{0j}}y_{1i}y_{0j}.
	\end{align*} } The compact representation of variance given here is essential to deriving a general expression for asymptotically optimal regression adjustment and results for variance bounds, below. 

Insight into how to construct $\dmat$ in practice may arise from exploring its elements further.  First define the joint probability that units $i$ and $j$ are both included in the treatment arm $\pi_{1i1j} :=\E\left[ R_{1i} R_{1j}\right]$, and note that the probability of assignment to treatment for unit $i$ could be written $\pi_{1i1i}$ or $\pi_{1i}$. Similarly, the joint probability of inclusion in the control group is $\pi_{0i0j} :=\E\left[ R_{0i}R_{0j} \right]$. Moreover, $\pi_{1i0j} :=  \E\left[ R_{1i} R_{0j}\right]$ is the probability that $i$ is in treatment and $j$ is in control, and $\pi_{0i1j} :=  \E\left[ R_{0i} R_{1j}\right]$ is the probability that $i$ is in control and $j$ is in treatment. 

Next, note that $\dmat$ can be partitioned into four $n \times n$ matrices. Write
\begin{align}\label{dmat.quad}
\dmat=\left[\begin{array}{ccc}
\dmat_{00} & \dmat_{01} 
\\ \dmat_{10} &\dmat_{11}\end{array}\right]
\end{align}
where, for example, the matrix $\dmat_{11}$ has $ij$ element $\frac{\pi_{1i1j}-\pi_{1i}\pi_{1j}}{\pi_{1i}\pi_{1j}}$ and the matrix $\dmat_{10}$ has $ij$ element $\frac{\pi_{1i0j}-\pi_{1i}\pi_{0j}}{\pi_{1i} \pi_{0j}} $. Sub-matrices $\dmat_{00}$ and $\dmat_{01}$ are defined analogously. The equivalence in (\ref{dmat.quad}) will be useful below. 

Since not all pairs of potential outcomes can be observed together for various reasons, not all terms in the quadratic in (\ref{HTvar}) are observable.  Beginning with Neyman (1923), this has led to the idea of bounding the variance in the design-based paradigm. A general approach to variance bounds and their estimation for both HT and regression estimators will be described in Section \ref{section.var.est}.
	
\section{Regression}\label{section.regression}

Now that the framework has been established for estimation and variance estimation under the NCM, this section turns to the main subject of the paper, regression adjustment.  

\subsection{Covariate specifications}

To discuss covariate specifications, it helps to move inductively from a familiar example. First, let ${\x}$ be the zero-centered matrix of covariates with $n$ rows and $k$ columns. The covariates themselves are taken as given (i.e., the question of how to arrive at the right covariates, how to transform them, etc., is left to another paper) and fixed (i.e., nonrandom and not affected by treatment). Now consider the analysis practice of regressing observed outcomes on separate intercepts for each treatment arm and on ${\x}$ using OLS. Using the current notational framework, this OLS coefficient estimator can be written
\begin{align}\label{b.est.ols}
	\widehat{b}^{ols}_\sI := \left({\xx_{\sI}}' \R \xx_{\sI} \right)^{-1} {\xx_{\sI}}' \R y
\end{align}
where
\begin{align*}
	{\xx}_\sI := & \left[
	\begin{matrix}
		-1_{\scriptscriptstyle n} & 0_{\scriptscriptstyle n} &  -{\x}
		\\ 0_{\scriptscriptstyle n} & 1_{\scriptscriptstyle n} &  {\x} 
	\end{matrix} \right]
\end{align*}
is a $2n \times (k+2)$ matrix.
Note that (\ref{b.est.ols}) is algebraically equivalent to the canonical OLS formulation that typically writes the estimator in terms of an $n \times (k+2)$ covariate matrix and a vector of observed outcomes that has length $n$. By contrast, however, the present formulation has the advantage that it separately represents the source of randomness ($\R$) and the fixed quantities ($\xx_{\sI}$ and $y$). Note that for convenience in later derivations, the leading column of $\xx_\sI$ is an intercept (constant) associated with the control group and the second column is an intercept associated with the treatment group. With only a slight change in interpretation of the coefficients, this could have instead been specified as a constant and a treatment indicator. Also, note that elements in the first $n$ rows of $\xx_\sI$ are multiplied by $-1$, to mirror the definition of the vector $y$ and thus ensuring that the elements of $\widehat{b}^{ols}_\sI$ have the expected signs. The subscript on matrix $\xx_\sI$ is given to distinguish it from an alternative specification given below, and, in later derivations, in the absence of such a subscript, $\xx$ will be taken to represent any arbitrary covariate specification. See also that $\widehat{b}^{ols}_\sI$ shares the subscript indicating the particular specification of $\xx$. The given specification will be referred to ``specification I" or alternatively the ``common slopes" specification.

By contrast, ``specification II" (or the ``separate slopes" specification) is given by,
\begin{align*}
\xx_\II := & \left[
\begin{matrix}
-1_{\scriptscriptstyle n} &  -\x & 0_{\scriptscriptstyle n} &  0_{\scriptscriptstyle n\times k}
\\ 0_{\scriptscriptstyle n} &   0_{\scriptscriptstyle n\times k} & 1_{\scriptscriptstyle n} & {\x} 
\end{matrix} \right]
\end{align*}
which is equivalent to including interactions between treatment and each covariate in $\x$.  \cite{lin}, for example, recommends this specification as a remedy to Freedman's (2008a,b) critique that for completely randomized designs OLS with specification I can in some cases hurt asymptotic precision. Note that, as in specification I above, there is an intercept for each treatment arm, rather than a specifying a common intercept and a treatment indicator. Again, this convention simplifies some exposition below. It does not affect the properties of estimators discussed.

It has yet to be said just what coefficient estimators, such as $\widehat{b}^{ols}_\sI$ in (\ref{b.est.ols}), estimate. For the time being suffice it to say that researchers will often interpret the difference between intercept coefficients in $\widehat{b}^{ols}_\sI$ as an estimate of the ATE, i.e., $[-1 \hspace{2mm} 1  \hspace{2mm}  0_{\scriptscriptstyle k}'\hspace{1mm}]\hspace{1mm}\widehat{b}^{ols}_\sI$ is often taken to be the ATE estimator. However, a generalized regression estimator can be defined which broadens the class of regression estimators to include those with coefficients that may not necessarily be directly interpretable in this fashion. Freeing regression coefficients from the burden of interpretable elements allow for the derivation of certain optimal estimators of the ATE that are not otherwise obvious. 

\subsection{Defining a class of generalized regression estimators}

Three equivalent forms for the proposed class of generalized regression estimators are given in the definition below. Sampling theorists have the longest history with the idea of generalized regression and the constructions that follow.\footnote{The approach herein differs from the ``GREG" estimator in the sampling literature in some key ways, however. First, their results were derived for the sampling setting rather than the causal inference context. Second, that literature has tended to focus on obtaining $\widehat{b}$ coefficients that are optimal under a model. This paper is fully design-based, so asymptotic optimality is considered from the design-based perspective.  Third, the $\widehat{b}$ coefficients considered in the GREG literature has been typically limited to the class $\widehat{b}^{greg}=\left(\tx \m \R \tx \right)^{-1}\tx \m \R y$ where $\m$ is a diagonal matrix with the $i,i$ entry involving $\pi_{i}$ (similar to WLS with $\bpi^{-1}$ weights) and often an estimate of (model) error variance. By contrast, this paper will propose estimators that have a somewhat different form in order to achieve asymptotic optimality in the design-based framework.} Doubly robust estimators also borrow the form from sampling theory.\footnote{There are three reasons not to refer to the generalized regression estimator as ``doubly robust", even though the latter term may be better known. First, the latter term was preceded by the term ``generalized regression estimator", first coined by the sampling theorists some years before. Moreover, unlike doubly robust estimation which were fashioned for observational studies, in the current framework $\bpi$ is given by the design. Hence, the estimator is not ``doubly robust" conditional on getting one or another set of modeling assumptions is correct; on the contrary, one could say that it is simply ``robust" because the treatment assignment probabilities are given and thus correct by design. Moreover, variance expressions in the doubly robust literature do not account for joint assignment probabilities, and hence, are not useful in the current framework. By contrast, variance expressions derived here lead to asymptotically optimal estimators that would not be conceived of in a tradition that assumes away the essential role that joint assignment probabilities play in variance.} The literature on control functions has apparently reinvented the generalized regression estimator as well.

\begin{definition}[Generalized regression estimators]\label{GenReg} Three equivalent forms for ``generalized regression estimators" of the average treatment effect (ATE) are given by
\begin{subequations}\label{greg}
	\begin{align}
\widehat{\delta}^{\tr} : = & n^{-1} 1'_{\scriptscriptstyle 2n}  (\bpi^{-1}\R y- \bpi^{-1}\R\xx \widehat{b} - \xx \widehat{b}  ) \label{greg.a}
\\ = &\widehat{\delta}^\tht - \widehat{\delta}_\xx^\tht \widehat{b} \label{greg.b}
\\ = & n^{-1} 1'_{\scriptscriptstyle 2n}\bpi^{-1} \R\widehat{u} +n^{-1} 1'_{\scriptscriptstyle 2n} \xx \widehat{b} \label{greg.c}
\end{align}
\end{subequations}
where $\widehat{\delta}^\tht$ is the HT estimator of the ATE, $\widehat{\delta}_\xx^{\tht}:= n^{-1} 1'_{\scriptscriptstyle 2n} \left( \bpi^{-1}\R - \I_{\scriptscriptstyle 2n} \right)\xx$ is a zero-centered vector of HT estimators of the column sums of $\xx$ divided by $n$, and $\widehat{u}:=y-\xx  \widehat{b}$.  Vector $\widehat{b}$ is an arbitrary coefficient estimator. Specific estimators in this class are distinguished by the particular $\widehat{b}$.
\end{definition}

The three forms of the generalized regression estimator in (\ref{greg}) are useful at different times in subsequent derivations.  Form (\ref{greg.a}) is the most disaggregated form. By grouping the terms inside the parentheses in different ways, the latter two forms can be derived.  

To arrive at (\ref{greg.b}), first factor out $\xx \widehat{b}$ from the second and third terms inside the parenthesis in (\ref{greg.a}). Then define the zero-centered HT estimator associated with $\xx$, $\widehat{\delta}_\xx^{\tht}:= n^{-1} 1'_{\scriptscriptstyle 2n} \left( \bpi^{-1}\R - \I_{\scriptscriptstyle 2n} \right)\xx$.  To see that $\widehat{\delta}_\xx^{\tht}$ is zero-centered, simply take its expectation, noting that $\bpi^{-1}\E[\R]=\I_{\scriptscriptstyle 2n}$. This form makes it clear that the generalized regression estimator is just the HT estimator of the ATE minus an adjustment term.  Its compactness will make it useful for deriving an asymptotic result below.

To arrive at (\ref{greg.c}), factor $\bpi^{-1}\R$ out of the first two terms inside the parenthesis in (\ref{greg.a}) and define $\widehat{u}:=y-\xx  \widehat{b}$, essentially a vector of residuals.  $\xx \widehat{b}$ is analogous to a vector of predicted values.  The equivalence shows that the generalized regression estimator can be thought of as an HT estimator of the mean of residuals plus an average of predicted values. In this form, which is also useful for certain variance derivations, some will see the connection to doubly robust estimators (but see footnote 10).

As previously mentioned, particular members of the generalized regression estimator class are determined by the corresponding definitions of $\widehat{b}$. Throughout, a superscript will be added to $\widehat{b}$ to signify a particular estimation method and a subscript will indicate covariate specifications. For example, $\widehat{b}^{ols}_\sI$ would be the ``common slopes" OLS regression as given in (\ref{b.est.ols}). The same superscript and subscript can be added to the corresponding ATE estimator, $\widehat{\delta}^{\tr, ols}_\sI$, as well. The pair ($\widehat{b}^{ols}_\sI$, $\widehat{\delta}^{\tr, ols}_\sI$) can be referred to as ``conjugates". Every unique $\widehat{b}$ implies a conjugate ATE estimator.

While the common use of regression involves interpreting the difference in intercept coefficients in $\widehat{b}$ as the ATE, the class of estimators defined here is broader. As such, one way to look at the utility of the generalized regression estimator is that it prescribes a general method of obtaining an estimate of the ATE from a broader array of specification-estimator-design combinations.\footnote{For example, suppose a researcher had block randomized with unequal assignment probabilities across blocks. Given the design, the first element of the OLS coefficient in ($\ref{b.est.ols}$) is not generally consistent for the ATE, and, hence, it can not be interpreted directly as such. However, (\ref{greg}) gives a general formulation for producing a consistent estimator of the ATE using a broad array of coefficient estimators, even those that are not directly interpretable given the design-specification combination. Consistency is discussed further, below.} This, for example, will allow us to define coefficient vectors with asymptotically optimal conjugates that would not otherwise be obvious (see Section \ref{section.Optimal}). 

In the next subsection, a general condition whereby the difference in intercept coefficients in $\widehat{b}$ will be algebraically equivalent to its conjugate ATE estimator, and hence directly interpretable, is given. This will show that the class of generalized regression estimators subsumes common regression practice of interpreting the difference in intercept coefficients as ATE estimates. Moreover, the result will lead to a few insights that may be familiar, but which all follow nicely from the one theorem.

\subsection{Common uses of regression are subsumed by the class of generalized regression estimators}

Since the most common regression practice is to interpret the difference in intercept terms in coefficient vectors as ATE estimates, it serves to connect that practice to the generalized framework. The following theorem shows that, in common practice, the difference in intercept coefficients is algebraically equivalent to the generalized regression estimator, thus explaining in what sense equation (\ref{greg}) ``generalizes" regression. 


The main thrust of the following theorem is to establish conditions under which the first term in (\ref{greg.c}) will be equal to zero, algebraically speaking.

\begin{theorem}\label{thrm.special} Let $\mathbf{m}$ be any symmetric, positive definite $2n \times 2n$ matrix and $\widehat{b}^\mathbf{m}=\left(\xx' \R' \mathbf{m}^{-1} \R \xx \right)^{-1} \xx' \R' \mathbf{m}^{-1} \R y$ (a class which encompasses GLS, WLS and OLS), then the conjugate generalized regression estimator, $\widehat{\delta}^{\tr,\mathbf{m}}$, is algebraically equivalent to $n^{-1} 1'_{\scriptscriptstyle 2n} \xx  \widehat{b}^{\mathbf{m}}$ if $\exists z$ such that 
	\begin{align}
	\R \xx  z =&(\R\mathbf{m}^{-1}\R)^{(-)}\bpi^{-1}\R {1_{\scriptscriptstyle 2n}}
	\end{align}
where $z$ is some vector of constants that combines the $x$'s, and $(.)^{(-)}$ is the Moore-Penrose generalized inverse.
\end{theorem}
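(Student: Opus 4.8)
The plan is to work from the third representation (\ref{greg.c}) of the generalized regression estimator. Setting $\widehat{b}=\widehat{b}^{\mathbf{m}}$ there and writing $\widehat{u}^{\mathbf{m}}:=y-\xx\widehat{b}^{\mathbf{m}}$, that form is $\widehat{\delta}^{\tr,\mathbf{m}}=n^{-1}1'_{\scriptscriptstyle 2n}\bpi^{-1}\R\widehat{u}^{\mathbf{m}}+n^{-1}1'_{\scriptscriptstyle 2n}\xx\widehat{b}^{\mathbf{m}}$, so the claimed algebraic equivalence holds precisely when the ``HT-of-residuals'' term $1'_{\scriptscriptstyle 2n}\bpi^{-1}\R\widehat{u}^{\mathbf{m}}$ vanishes. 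Because $\bpi$ and $\R$ are diagonal, hence symmetric and mutually commuting, and $\R^2=\R$, I would rewrite this scalar as the inner product $(\bpi^{-1}\R 1_{\scriptscriptstyle 2n})'\widehat{u}^{\mathbf{m}}$ and record that the fixed vector $\bpi^{-1}\R 1_{\scriptscriptstyle 2n}=\R(\bpi^{-1}1_{\scriptscriptstyle 2n})$ lies in $\operatorname{col}\R$.

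Next I would bring in the normal equations satisfied by $\widehat{b}^{\mathbf{m}}$: directly from its definition, $\xx'\R'\mathbf{m}^{-1}\R\widehat{u}^{\mathbf{m}}=\xx'\R'\mathbf{m}^{-1}\R y-\xx'\R'\mathbf{m}^{-1}\R\xx\widehat{b}^{\mathbf{m}}=\0$. Hence it is enough to produce a vector $z$ with $\R'\mathbf{m}^{-1}\R\xx z=\bpi^{-1}\R 1_{\scriptscriptstyle 2n}$, since then $(\bpi^{-1}\R 1_{\scriptscriptstyle 2n})'\widehat{u}^{\mathbf{m}}=z'\xx'\R'\mathbf{m}^{-1}\R\widehat{u}^{\mathbf{m}}=0$ and the estimator collapses to $n^{-1}1'_{\scriptscriptstyle 2n}\xx\widehat{b}^{\mathbf{m}}$. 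So the whole argument reduces to deriving the solvability of this linear system from the hypothesis of the theorem.

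To close that gap I would use the defining properties of the Moore--Penrose inverse. Put $\mathbf{P}:=\R\mathbf{m}^{-1}\R$, which is symmetric, positive semidefinite, and (since $\R'=\R$) equal to $\R'\mathbf{m}^{-1}\R$. Because $\mathbf{m}^{-1}$ is positive definite, $v'\mathbf{P}v=0$ forces $\R v=\0$, so $\ker\mathbf{P}=\ker\R$ and therefore $\operatorname{col}\mathbf{P}=\operatorname{col}\R$; in particular $\bpi^{-1}\R 1_{\scriptscriptstyle 2n}\in\operatorname{col}\mathbf{P}$, so the orthogonal projector $\mathbf{P}\mathbf{P}^{(-)}$ onto $\operatorname{col}\mathbf{P}$ fixes it. Now take the $z$ supplied by the hypothesis, $\R\xx z=\mathbf{P}^{(-)}\bpi^{-1}\R 1_{\scriptscriptstyle 2n}$, and left-multiply by $\mathbf{P}$: using $\R^2=\R$ the left side is $\mathbf{P}\R\xx z=\R\mathbf{m}^{-1}\R\xx z=\R'\mathbf{m}^{-1}\R\xx z$, while the right side is $\mathbf{P}\mathbf{P}^{(-)}\bpi^{-1}\R 1_{\scriptscriptstyle 2n}=\bpi^{-1}\R 1_{\scriptscriptstyle 2n}$. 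This is exactly the identity needed in the previous step, so the conclusion follows.

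The only step that needs genuine care, and the one I would flag as the main obstacle, is the column-space identity $\operatorname{col}(\R\mathbf{m}^{-1}\R)=\operatorname{col}(\R)$ together with its consequence that $\mathbf{P}\mathbf{P}^{(-)}$ acts as the identity on $\bpi^{-1}\R 1_{\scriptscriptstyle 2n}$: this is what licenses cancelling $\mathbf{P}\mathbf{P}^{(-)}$, and it is the reason the Moore--Penrose inverse rather than an arbitrary generalized inverse is what appears in the statement. Everything else is bookkeeping with transposes, the idempotency of $\R$, and the commuting of the diagonal matrices $\bpi$ and $\R$. The converse implication can also be obtained by running these identities backwards (using that $\mathbf{P}^{(-)}\mathbf{P}$ fixes $\operatorname{col}\R$, which contains $\operatorname{col}(\R\xx)$), but the theorem claims only sufficiency, so the forward direction above is all that is required.
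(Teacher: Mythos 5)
Your proof is correct and follows the same route as the paper's: reduce the claim to showing the HT-of-residuals term $1'_{\scriptscriptstyle 2n}\bpi^{-1}\R\widehat{u}^{\mathbf{m}}$ vanishes, invoke the normal equations $\xx'\R'\mathbf{m}^{-1}\R\,\widehat{u}^{\mathbf{m}}=0$, and deduce from the hypothesis that $\R\mathbf{m}^{-1}\R\xx z=\bpi^{-1}\R 1_{\scriptscriptstyle 2n}$ for the supplied $z$. The only substantive difference is that you explicitly justify the last step via the column-space identity $\operatorname{col}(\R\mathbf{m}^{-1}\R)=\operatorname{col}(\R)$ and the fact that $\mathbf{P}\mathbf{P}^{(-)}$ fixes $\bpi^{-1}\R 1_{\scriptscriptstyle 2n}$, a point the paper passes over silently when it converts $\R\xx z=(\R\mathbf{m}^{-1}\R)^{(-)}\bpi^{-1}\R 1_{\scriptscriptstyle 2n}$ back into the needed equation; your version supplies the missing justification and is the more careful of the two.
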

\begin{proof}
	First note that to prove the theorem, we need to show that the above condition implies that the first term in (\ref{greg.c}) equals zero, i.e., that
	\begin{align}\label{cond1}
	  \left(y-\xx  \widehat{b} ^\mathbf{m} \right)' \bpi^{-1}\R 1_{\scriptscriptstyle 2n}=&0.
	\end{align}
	To see when the equality in (\ref{cond1}) will hold, first note that from the definition of $\widehat{b}^\mathbf{m} $ given in the theorem we have
	\begin{align*}
	\left( y-\xx \widehat{b}^\mathbf{m} 
	\right)'  \R' \mathbf{m}^{-1} \R \xx   =0.
	\end{align*}
	Therefore, it must also be the case that
	\begin{align}\label{cond2}
	\left( y-\xx \widehat{b}^\mathbf{m} 
	\right)'  \R' \mathbf{m}^{-1} \R \xx z  =0
	\end{align}
	for any vector $z$ that linearly combines the $x$'s in some way. 
	Comparing the condition given in (\ref{cond1}) to the equality in (\ref{cond2}) we can see that the only need 
	\begin{align*}
	\R \mathbf{m}^{-1}\R \xx  z  =\bpi^{-1} \R {1_{\scriptscriptstyle 2n}}
	\end{align*}
	for some value of $z$.  This is satisfied when there exits a $z$ such that 
		\begin{align*}
		\R \xx  z  =(\R \mathbf{m}^{-1}\R)^{(-)} \bpi^{-1} \R {1_{\scriptscriptstyle 2n}},
		\end{align*}
	completing the proof. 
\end{proof}

\begin{remark}[Algebraic equivalences for OLS in equal-$\pi$ designs]
	For any identified design with equal $\pi_{1i}$ for all $i$ (such as a completely randomized design) when using OLS (i.e., $\m^{-1}$ is an identity matrix), the condition in Theorem \ref{thrm.special} reduces to $\R \xx  z = \bpi^{-1} \R1_{\scriptscriptstyle 2n}$. This is trivially satisfied in specifications with an intercept for each treatment arm (such as specification I and specification II) and for equivalent specifications (such as a common intercept with a treatment indicator). For specification \emph{I}, this means that the generalized regression estimator \emph{$\widehat{\delta}^{\tr,ols}_{\sI}$} is algebraically equivalent to the difference in intercept terms its conjugate coefficient estimator, \emph{$\widehat{b}^{ols}_{\sI}$}.  For specification \emph{II} this means that, if the columns of ${\x}$ have mean zero, then the generalized regression estimator \emph{$\widehat{\delta}^{\tr,ols}_{\II}$} is algebraically equivalent to the difference of intercept terms in its conjugate coefficient estimator, \emph{$\widehat{b}^{ols}_{\II}$}.
\end{remark}

\begin{remark}[Algebraic equivalences for WLS with $\bpi^{-1}$ weights]
	{For any identified design when using WLS with $\bpi^{-1}$ weights (i.e., when $\mathbf{m}^{-1}=\bpi^{-1}$) the condition in Theorem \ref{thrm.special} reduces to $\R \xx  z = \R  1_{2n}$. This is trivially satisfied in specifications with a separate intercept for each treatment arm (such as specification \emph{I} and specification \emph{II}) and for equivalent specifications (such as a common intercept with a treatment indicator). }
	{For specification \emph{I} this means that the generalized regression estimator \emph{$\widehat{\delta}^{\tr,\pi wls}_\sI$} is algebraically equivalent to the difference in intercept terms in the conjugate coefficient estimator, \emph{$\widehat{b}^{\pi wls}_\sI$}.}
	{For specification \emph{II} this means that, if the columns of ${\x}$ have mean zero, then the generalized regression estimator \emph{$\widehat{\delta}^{\tr,\pi wls}_\II$} is algebraically equivalent to the difference in intercept terms its conjugate coefficient estimator, \emph{$\widehat{b}^{\pi wls}_\II$}.}
\end{remark}

\begin{corollary}\label{AddPiToXX}
	One can ensure that the condition in Theorem \ref{thrm.special} holds by including a vector, $v$, in $\xx$ that satisfies $\R v =(\R \mathbf{m}^{-1}\R)^{(-)} \bpi^{-1} \R {1}$.  
\end{corollary}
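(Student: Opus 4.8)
The plan is to read the corollary straight off Theorem~\ref{thrm.special}: it merely supplies an explicit construction of a vector $z$ of the kind the theorem's hypothesis demands. Recall that Theorem~\ref{thrm.special} concludes $\widehat{\delta}^{\tr,\mathbf{m}} = n^{-1}1'_{\scriptscriptstyle 2n}\xx\widehat{b}^{\mathbf{m}}$ provided there exists some constant vector $z$ with $\R\xx z = (\R\mathbf{m}^{-1}\R)^{(-)}\bpi^{-1}\R 1_{\scriptscriptstyle 2n}$. So it is enough to exhibit one such $z$ once the stipulated column $v$ has been adjoined to $\xx$.

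Concretely, I would let $\xx$ denote the augmented covariate matrix --- the original specification with $v$ appended, say as its $j$-th column --- and let $\widehat{b}^{\mathbf{m}}$ and $\widehat{\delta}^{\tr,\mathbf{m}}$ be the conjugate pair built from this augmented $\xx$. Taking $z = e_j$, the $j$-th standard basis vector (of length equal to the number of columns of $\xx$), gives $\xx z = v$, hence $\R\xx z = \R v$, which by the hypothesis placed on $v$ equals $(\R\mathbf{m}^{-1}\R)^{(-)}\bpi^{-1}\R 1_{\scriptscriptstyle 2n}$. The condition of Theorem~\ref{thrm.special} is therefore satisfied, and the theorem delivers the claimed algebraic equivalence.

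There is no genuinely hard step here, so the ``obstacle'' is really just a couple of points to state with care. First, the defining equation only constrains $\R v$, i.e., it pins $v$ down only on the (random) support of $\R$; this is what allows a single fixed $v$ to meet it for every realization of the design, and it is what makes the corollary non-vacuous --- for instance, when $\mathbf{m}^{-1} = \bpi^{-1}$ the right-hand side collapses to $\R 1_{\scriptscriptstyle 2n}$, so $v = 1_{\scriptscriptstyle 2n}$ works, recovering the remark on WLS with $\bpi^{-1}$ weights. Second, since appending $v$ alters $\widehat{b}^{\mathbf{m}}$ and its conjugate, the asserted equivalence is between the two objects formed from the \emph{augmented} $\xx$, not the original one. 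If one wishes the displayed inverse $(\xx'\R'\mathbf{m}^{-1}\R\xx)^{-1}$ to remain well defined, one should also observe that any rank deficiency introduced by $v$ is harmless, since the proof of Theorem~\ref{thrm.special} uses only the normal equations and hence goes through verbatim with a generalized inverse in place of that inverse.
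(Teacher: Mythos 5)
Your proof is correct and matches the paper's (implicit) argument: the paper states this corollary without proof, treating it as immediate from Theorem \ref{thrm.special}, and your choice $z=e_j$ so that $\R\xx z=\R v$ is exactly the instantiation being relied upon. Your side remarks (the equivalence holding for the augmented specification, and handling rank deficiency via the normal equations) are accurate but not required.
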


\begin{remark}
	Corollary \ref{AddPiToXX} shows that for any identified design the condition is satisfied for OLS when the reciprocal of probability of assignment, $\bpi^{-1} 1_{\scriptscriptstyle 2n}$, is included as a covariate in $\xx$. Moreover, including a zero-centered version of $\bpi^{-1} 1_{\scriptscriptstyle 2n}$ in $\xx$ would allow for interpretation of the difference in intercept terms.  
\end{remark}

\begin{remark}
	Note that in spite of Corollary \ref{AddPiToXX}, $(\R\mathbf{m}^{-1}\R )^{(-)}\bpi^{-1}\R1$ could effectively be a random variable if $\m$ is not a diagonal matrix. Thus adding it to the matrix $\xx$ could have unexpected consequences for variance and bias of the ATE estimator.
\end{remark}

Remarks 1-3 show that in some cases it is possible to directly interpret the difference in intercept coefficients as an estimated ATE. These are special cases of the generalized regression estimator in Definition \ref{GenReg}, and these relationships reveal in what sense it ``generalizes" (i.e., subsumes) regression approaches in common use. The generalization, however, will allow for the definition of ATE estimators that obtain asymptotic optimality but for which the difference in intercept terms in the conjugate may not be readily interpretable.

Even when using an estimator-design-specification combination where the condition in Theorem \ref{thrm.special} holds, knowing the point estimate of the ATE is $n^{-1} 1'_{\scriptscriptstyle 2n} \xx  \widehat{b}^{\mathbf{m}}$ can be useful. For example, it leads to the well-known maxim that the difference in intercept terms can only be directly interpreted in specification II if covariates, $\x$, are transformed to have mean zero. However, should the researcher forgo zero-centering, it still algebraically defines the process for arriving at an ATE estimate.

\subsection{Variance of the generalized regression estimator when $\widehat{b}$ is fixed \\ (-and- A post-hoc test of improved precision)}\label{section.fixedb}

An exact expression for the variance of the generalized regression estimator is straightforward when $\widehat{b}$ is a fixed vector of constants, call it $b^{f}$.\footnote{In sampling theory, when the $b$ coefficients are fixed constants the corresponding estimator is called a ``difference estimator".}  In theory, a researcher might obtain this fixed vector through examination of an auxiliary data set, or by way of conjecture, insight or divination. In practice, researchers will likely estimate coefficient values from the data at hand. Nonetheless, the variance of $\widehat{\delta}^{\tr,f}$ (the conjugate of the fixed coefficient $b^f$) is useful to consider for the following reasons. First, the variance expression for $\widehat{\delta}^{\tr,f}$ will help to establish the asymptotic variance expression for generalized regression estimators (see section \ref{section.asymptotic.arguments}). Second, a value of $b^{f}$ that is finite sample optimal is a quantity that a coefficient estimator might target to obtain {\it asymptotic} optimality (see section \ref{section.Optimal}). This section also provides the basis for a test of the null hypothesis that adjustment does not help precision.

\begin{definition}[Fixed-coefficient generalized regression estimators] ``Fixed-coefficient generalized regression estimators" are in a subclass of generalized regression estimators defined in $(\ref{greg})$ where $\widehat{b}=b^f$ and $b^f \in \mathds{R}^{l}$ is a vector of constants.
\end{definition}

\begin{lemma}\label{lemma.finiteVar}
	The finite sample variance of the fixed-coefficient generalized regression estimator, $\widehat{\delta}^{\tr,f}$, with conjugate $b^{f}$ being a fixed constant, is 
	\begin{align}\label{eq.finiteVar}
	\emph{V} \left( \widehat{\delta}^{\tr,f} \right)=n^{-2}u'\dmat u
	\end{align}
	where $u:=y-\xx b^{f}$.
\end{lemma}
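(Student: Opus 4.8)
The plan is to work from the third representation of the generalized regression estimator, equation (\ref{greg.c}), which writes $\widehat{\delta}^{\tr} = n^{-1} 1'_{\scriptscriptstyle 2n}\bpi^{-1} \R\widehat{u} + n^{-1} 1'_{\scriptscriptstyle 2n} \xx \widehat{b}$. The whole point of specializing to a fixed coefficient vector $b^f$ is that it makes the ``residual'' $\widehat{u} = y - \xx b^f = u$ a vector of constants (since $y$, $\xx$, and $b^f$ are all nonrandom), and likewise makes the predicted-value term $n^{-1} 1'_{\scriptscriptstyle 2n} \xx b^f$ a nonrandom scalar. So the first step is simply to substitute $\widehat{b} = b^f$ into (\ref{greg.c}) and observe that $\widehat{\delta}^{\tr,f}$ equals a constant plus the single random term $n^{-1} 1'_{\scriptscriptstyle 2n}\bpi^{-1} \R u$.

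The second step is to note that additive constants do not affect variance, so $\V(\widehat{\delta}^{\tr,f}) = \V\!\left(n^{-1} 1'_{\scriptscriptstyle 2n}\bpi^{-1} \R u\right)$. The third step is to recognize that this is exactly the HT-type object already analyzed in Section \ref{section.ht}: comparing with (\ref{ht.est}), the quantity $n^{-1} 1'_{\scriptscriptstyle 2n}\bpi^{-1} \R u$ is structurally the HT estimator with the fixed vector $u$ playing the role that $y$ plays there. Hence, treating $u$ as coefficients on the random row vector $1'_{\scriptscriptstyle 2n}\bpi^{-1}\R$ and invoking the design matrix $\dmat := \V\!\left(1'_{\scriptscriptstyle 2n}\bpi^{-1}\R\right)$ from (\ref{HTvar}), bilinearity of variance gives $\V\!\left(n^{-1} 1'_{\scriptscriptstyle 2n}\bpi^{-1} \R u\right) = n^{-2} u' \dmat u$, which is the claimed expression.

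There is essentially no hard step here; the lemma is a direct consequence of the compact variance identity (\ref{HTvar}) once one commits to the representation (\ref{greg.c}). If anything, the only point that warrants a sentence of care is the verification that $u$ is genuinely nonrandom under the NRCM (the potential outcomes $y$, the covariate matrix $\xx$, and the chosen $b^f$ are all fixed, so the only randomness in $\widehat{\delta}^{\tr,f}$ enters through $\R$), which is what licenses pulling $u$ out of the variance as a coefficient vector and discarding the constant term. I would state the argument in three short displayed lines following the three steps above.
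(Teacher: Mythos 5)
Your proposal is correct and follows exactly the paper's own argument: start from form (\ref{greg.c}), observe that with $\widehat{b}=b^f$ the second term is a nonrandom constant and the first term is an HT estimator of the fixed residual vector $u$, then apply the variance identity (\ref{HTvar}) with $u$ in place of $y$. No differences worth noting.
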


\begin{proof}
	To see the result, start with the third form of the generalized regression estimator given in (\ref{greg.c}). Note that when $\widehat{b}=b^f$, a constant vector, the second term in (\ref{greg.c}) is a constant. The first term is recognizable as a HT estimator for the mean of vector $u :=y - \xx b^f$ (i.e., the residual vector), and note that $u$ is fixed, not random, for a given $b^f$. Hence, the exact variance is constructed as in equation (\ref{HTvar}) but with $u$ in place of $y$.
\end{proof}


One question is whether fixed-coefficient generalized regression estimator improves precision over the HT estimator, and how one might be confident of that in practice.  The answer to this question will suggest the basis of a hypothesis test that can be further explored after Section \ref{section.var.est} on variance estimation. To begin to develop the idea of the test, note that ($n^2$ times) the difference in variances can be written
\begin{align*}
	n^2 \V \left( \widehat{\delta}^{\tht} \right) - n^2 \V \left( \widehat{\delta}^{\tr,f} \right)=& y'\dmat y -u'\dmat u
	\\ = & y'\dmat y-\left(y'\dmat y - 2 b^{f'} \xx'\dmat y + b^{f'} \xx'\dmat \xx b^f\right)
	\\ = & 2 b^{f'} \xx'\dmat y - b^{f'} \xx'\dmat\xx b^f
	\\ = & 2 b^{f'} \xx'\dmat (y- \xx b^f)  +b^{f'} \xx'\dmat\xx b^f
	\\ = & 2 b^{f'} \xx'\dmat u  +b^{f'} \xx'\dmat\xx b^f
\end{align*}
where the vector $u:=y-\xx b^f$ is not observed for every unit, so that the quantity cannot be observed.  However, an estimator can be proposed by defining length-$2n$ column vector $v=(2b^{f'}\xx'\dmat \text{diag}(u))'$ and testing whether $\widehat{\delta}^{\tht}_v:=n^{-1}1_{\scriptscriptstyle 2n}\bpi^{-1}\R v$ is greater than $-n^{-1}b^{f'} \xx'\dmat\xx b^f$.  Since $\widehat{\delta}^{\tht}_v$ is just an HT estimator, the same machinery that will be developed in Section \ref{section.var.est} for conservative variance estimation can be applied to it and conservative inference can follow. 

Note that the proposed method tests for the difference-of-variances, which is identified, even though the variances are not themselves identified. By contrast, a direct comparison of variance estimators based on those proposed in Section \ref{section.var.est} would actually be a comparison of variance bounds, and, hence, less relevant. 

An analogous test for generalized regression estimators with coefficients estimated from the data could also be developed. These tests could help reassure analysts with concerns that regression adjustment can sometimes hurt asymptotic precision \citep{freedman08a, freedman08b}. Of course, estimation decisions should be set ahead of time in pre-analysis plans, and such a test should only be used in retrospect.  But such a test could still be useful for decision making, say, in a review of past studies to help a researcher determine whether to use regression adjustment in a future study.

\subsection{An asymptotic argument}\label{section.asymptotic.arguments}

In this section conditions for generalized regression estimators to be asymptotically unbiased, consistent, and asymptotically normal are given. The conditions given are somewhat high-level because greater specificity is difficult without first limiting asymptotic analysis to a particular design (e.g., complete randomization, cluster randomization, block randomization, etc.) and perhaps being more specific about the class of coefficient estimators (e.g., OLS, WLS, etc.).

That said, an important conclusion in this subsection is that, asymptotically speaking, a key consideration is whether a sequence of designs and finite populations are such that HT estimators are root-$n$ consistent and asymptotically normal. If they are, then the coefficient, $\widehat{b}$, need only converge in probability. On the one hand, for sufficiently large $n$ this provides a certain amount of freedom to choose coefficient estimators whose asymptotic normality or rate of convergence is uncertain. On the other hand, the reliance on the properties of HT estimators should be reassuring because they are well-studied, and their asymptotic properties are worked out under a variety of designs.

\vspace{2mm} 
\noindent \textbf{Assumptions:} 
\begin{enumerate}
	\item[1.] (Root-n HT estimators) Positive $ l_l, l_u$ exist such that, for all $n$, $ l_l \leq n c' \V(\widehat{\delta}^{\tht}_\mathbb{z}) c \leq l_u$ where $\mathbb{z}=\left[y \hspace{2mm} \xx \right]$ and $\left| c \right|=1$ 
	\item[2.] (Convergence of $\widehat{b}$) $\widehat{b}-b=O(n^{-r})$ for some $r>0$
	\item[3.] (Multivariate normal HT estimators) $\left[ \V(\widehat{\delta}^{\tht}_\mathbb{z}) \right]^{-0.5} (\widehat{\delta}^{\tht}_\mathbb{z}-\delta_\mathbb{z})'  \xrightarrow{d} N\left(0, \I \right)$ where $\mathbb{z}=\left[y \hspace{2mm} \xx \right]$
\end{enumerate}

\begin{theorem} Under Assumptions 1-2, $ \sqrt{n}(\widehat{\delta}^{\tr} -\delta ) $ has limiting variance
\begin{align}\label{a.var}
	 \lim_{n\rightarrow \infty} n^{-1} u' \dmat u
\end{align}
where $u := y - \xx b$. Moreover, with the addition of Assumption 3,
\begin{align}\label{a.norm}
n( u' \dmat u)^{-0.5}( \widehat{\delta}^{\tr}-\delta ) \xrightarrow{d} & N( 0, 1 ).
\end{align}
\end{theorem}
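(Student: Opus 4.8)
The plan is to reduce the estimated-coefficient case to the fixed-coefficient case already handled by Lemma~\ref{lemma.finiteVar}, working from the compact form~(\ref{greg.b}). Writing $\widehat{\delta}^{\tr} = \widehat{\delta}^\tht - \widehat{\delta}_\xx^\tht\widehat{b}$ and adding and subtracting $\widehat{\delta}_\xx^\tht b$,
\begin{align*}
\widehat{\delta}^{\tr} - \delta = \left(\widehat{\delta}^\tht - \widehat{\delta}_\xx^\tht b - \delta\right) - \widehat{\delta}_\xx^\tht(\widehat{b}-b) = \left(\widehat{\delta}^{\tr,f} - \delta\right) - \widehat{\delta}_\xx^\tht(\widehat{b}-b),
\end{align*}
where $\widehat{\delta}^{\tr,f}$ is the fixed-coefficient generalized regression estimator conjugate to $b^f := b$. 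Since $\widehat{\delta}^\tht$ is unbiased for $\delta$ and $\widehat{\delta}_\xx^\tht$ has mean zero (Definition~\ref{GenReg}), $\widehat{\delta}^{\tr,f}$ is unbiased for $\delta$, so by Lemma~\ref{lemma.finiteVar} the term $\sqrt{n}(\widehat{\delta}^{\tr,f}-\delta)$ has mean zero and exact variance $n^{-1}u'\dmat u$ with $u = y - \xx b$. Taking the unit contrast $c$ in Assumption~1 proportional to $(1,-b')'$ shows that $n^{-1}u'\dmat u$ is bounded above and bounded away from zero for all $n$ (here $b$ is taken to be a finite fixed vector, as is implicit in the assumptions).

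Next I would show the cross term is negligible at the $\sqrt{n}$ scale. Each coordinate of $\widehat{\delta}_\xx^\tht$ is a mean-zero HT-type estimator whose variance is $O(n^{-1})$ --- this is Assumption~1 applied to the relevant column of $\mathbb{z}=[y\ \xx]$ --- so $\widehat{\delta}_\xx^\tht = O_p(n^{-1/2})$ by Chebyshev. Assumption~2 gives $\widehat{b}\arrowp b$ (only consistency, not the rate, is used here), hence $\widehat{\delta}_\xx^\tht(\widehat{b}-b) = o_p(n^{-1/2})$ and
\begin{align*}
\sqrt{n}\,(\widehat{\delta}^{\tr} - \delta) = \sqrt{n}\,(\widehat{\delta}^{\tr,f} - \delta) + o_p(1).
\end{align*}
This establishes the first conclusion, since $\sqrt{n}(\widehat{\delta}^{\tr,f}-\delta)$ has exact variance $n^{-1}u'\dmat u$ for every $n$, with limit $\lim_{n\to\infty} n^{-1}u'\dmat u$.

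For~(\ref{a.norm}) I would use that $\widehat{\delta}^{\tr,f} - \delta = (1,-b')(\widehat{\delta}^{\tht}_\mathbb{z} - \delta_\mathbb{z})'$ is a fixed linear contrast of the HT vector in Assumption~3 (its $y$-coordinate is $\widehat{\delta}^\tht - \delta$ and its $\xx$-coordinates are $\widehat{\delta}_\xx^\tht$). Set $\Sigma_n := \V(\widehat{\delta}^{\tht}_\mathbb{z})$, $W_n := \Sigma_n^{-1/2}(\widehat{\delta}^{\tht}_\mathbb{z} - \delta_\mathbb{z})' \xrightarrow{d} N(0,\I)$, and $a_n := \Sigma_n^{1/2}(1,-b')'$, so that $\widehat{\delta}^{\tr,f} - \delta = a_n'W_n$ with $\|a_n\|^2 = (1,-b')\Sigma_n(1,-b')' = n^{-2}u'\dmat u$ by Lemma~\ref{lemma.finiteVar}. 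Then $n(u'\dmat u)^{-1/2}(\widehat{\delta}^{\tr,f}-\delta) = (a_n/\|a_n\|)'W_n$. Since the unit vectors $a_n/\|a_n\|$ lie on a compact sphere, every subsequence has a further subsequence along which $a_n/\|a_n\| \to v$; along it $(a_n/\|a_n\|)'W_n = v'W_n + (a_n/\|a_n\| - v)'W_n \xrightarrow{d} N(0,1)$, the second piece being $o_p(1)$ by tightness of $W_n$. As the limit is the same for every subsequence, $n(u'\dmat u)^{-1/2}(\widehat{\delta}^{\tr,f}-\delta) \xrightarrow{d} N(0,1)$; and because $n^{-1}u'\dmat u$ is bounded away from zero, $n(u'\dmat u)^{-1/2}\widehat{\delta}_\xx^\tht(\widehat{b}-b) = o_p(1)$, so Slutsky yields~(\ref{a.norm}).

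The main obstacle is this last normalization step: because the contrast $\widehat{\delta}^{\tr,f}-\delta$ is divided by its own standard deviation $\sqrt{n^{-2}u'\dmat u}$, and the implied weight vector $a_n/\|a_n\|$ varies with $n$, one cannot invoke the Cram\'er--Wold device directly --- the compactness/subsequence argument above (equivalently, a characteristic-function argument exploiting that $W_n \xrightarrow{d} N(0,\I)$ in fixed dimension) is what closes the gap. It is precisely here that the lower bound $l_l>0$ in Assumption~1 is essential, to keep $n^{-1}u'\dmat u$ --- and hence the normalization and the $o_p$ remainder --- non-degenerate.
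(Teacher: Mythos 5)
Your proof is correct and follows essentially the same route as the paper's: the same add-and-subtract decomposition $\widehat{\delta}^{\tr} = (\widehat{\delta}^\tht - \widehat{\delta}_\xx^\tht b) - \widehat{\delta}_\xx^\tht(\widehat{b}-b)$, the same appeal to Lemma~\ref{lemma.finiteVar} for the exact variance of the leading term, and the same use of Assumptions 1--3. If anything you are more careful than the paper at the final normality step, where you handle the $n$-varying contrast vector explicitly via a subsequence argument rather than simply asserting that the conclusion follows from Assumption 3.
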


\begin{proof} Starting with the form for the generalized regression estimator given in (\ref{greg.b}) and using Assumptions 1 and 2 and the fact that $\E[\widehat{\delta}_{\xx}^\tht]=0$ for all $n$,
\begin{align*}
\widehat{\delta}^{\tr} := &\widehat{\delta}^\tht - \widehat{\delta}_\xx^\tht \widehat{b} 
\\   = &\widehat{\delta}^\tht - \widehat{\delta}_\xx^\tht {b} - \widehat{\delta}_\xx^\tht  (\widehat{b}-b ) 
\\ = &\widehat{\delta}^\tht - \widehat{\delta}_\xx^\tht {b} + O(n^{-0.5-r}) 
\end{align*}
Moreover, $b$ is a fixed (limit) value so that, by Lemma \ref{lemma.finiteVar}, $\V\left( \widehat{\delta}^\tht - \widehat{\delta}_\xx^\tht {b}\right)=n^{-2} u' \dmat u$ for all $n$. Expression (\ref{a.var}) follows. Next, it follows from Assumption 3 that $ \sqrt{n}(\widehat{\delta}^\tht - \widehat{\delta}_\xx^\tht {b}) $ has limiting normal distribution so that, by also invoking the variance expression in (\ref{a.var}), (\ref{a.norm}) follows.
\end{proof}

\section{Optimal Regression For Arbitrary Designs}\label{section.Optimal}

Lemma \ref{lemma.finiteVar} gives the finite sample variance of $\widehat{\delta}^{\tr,f}$, the conjugate of the fixed regression coefficient, $b^f$, first introduced in Section \ref{section.fixedb}. One might next ask, what value of $b^f \in \mathds{R}^{l}$ minimizes the finite sample variance of $\widehat{\delta}^{\tr,f}$? That question is answered in subsection \ref{section.finite.optimal}.  The answer allows the derivation of coefficient estimators that target optimal values of $b^f$ in subsections \ref{section.3HT} and \ref{section.2RA}. By arguments in section \ref{section.asymptotic.arguments}, as long as the proposed coefficient estimators converges to the finite sample optimal value and Assumption 1 holds, then its conjugate ATE estimator obtains the asymptotic minimum variance in the class of generalized regression estimators. 

\subsection{Optimality when $\widehat{b}$ is fixed}\label{section.finite.optimal}

In this section, finite-sample optimal values of $b^f$, the fixed-coefficient introduced in Section \ref{section.fixedb}, are derived. 

\begin{theorem}\label{thrm.bopt}
	Letting $(.)^{(-)}$ represent the Moore-Penrose generalized inverse\footnote{A generalized inverse of $\mathbf{a}$, $\mathbf{a}^{(g)}$, has the property that $\mathbf{a}\mathbf{a}^{(g)}\mathbf{a}=\mathbf{a}$. When the inverse of $\mathbf{a}$ exists, a generalized inverse corresponds to the usual inverse.}, a coefficient value that is finite sample optimal for the fixed-coefficent generalized regression estimator is
	\begin{align}\label{bopt}
	b^{opt}:=(\xx ' \dmat \xx )^{(-)} \xx'\dmat y.
	\end{align}
\end{theorem}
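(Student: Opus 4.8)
The plan is to treat this as a convex quadratic minimization problem. By Lemma \ref{lemma.finiteVar}, minimizing $\V(\widehat{\delta}^{\tr,f})$ over $b^f \in \mathds{R}^{l}$ is the same as minimizing $g(b) := u'\dmat u = (y - \xx b)'\dmat(y - \xx b)$, since $n^{-2}>0$ is a fixed scalar. The matrix $\dmat = \V(1'_{\scriptscriptstyle 2n}\bpi^{-1}\R)$ is a variance--covariance matrix, hence symmetric and positive semidefinite, so it has a symmetric square root $\dmat^{1/2}$ and $g(b) = \| \dmat^{1/2} y - \dmat^{1/2}\xx b \|^2 \ge 0$. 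This exhibits $g$ as an (infeasible, but purely algebraic) generalized least-squares objective in $b$; its Hessian $2\,\xx'\dmat\xx$ is positive semidefinite because $v'\xx'\dmat\xx v = (\xx v)'\dmat(\xx v)\ge 0$, so $g$ is convex and every stationary point is a global minimizer.

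First I would expand $g(b) = y'\dmat y - 2\,b'\xx'\dmat y + b'\xx'\dmat\xx b$ using symmetry of $\dmat$, and set the gradient to zero, which yields the normal equations $\xx'\dmat\xx\, b = \xx'\dmat y$. It then remains to check that $b^{opt} = (\xx'\dmat\xx)^{(-)}\xx'\dmat y$ solves these equations. Writing $M := \xx'\dmat\xx$, the defining property $M M^{(-)} M = M$ of the Moore--Penrose inverse gives $M b^{opt} = M M^{(-)}\,\xx'\dmat y$, which equals $\xx'\dmat y$ as soon as $\xx'\dmat y \in \mathrm{col}(M)$.

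The one genuinely non-routine point — and the part I would be most careful with — is this column-space inclusion, since $\dmat$ need not be invertible (a design matrix is typically only positive semidefinite). The clean argument uses $\ker(\xx'\dmat\xx) = \ker(\dmat^{1/2}\xx)$ (because $\xx'\dmat\xx v = 0 \iff \|\dmat^{1/2}\xx v\|^2 = 0$), so that $\mathrm{col}(\xx'\dmat\xx) = \ker(\dmat^{1/2}\xx)^{\perp} = \mathrm{col}(\xx'\dmat^{1/2})$; then $\xx'\dmat y = (\xx'\dmat^{1/2})(\dmat^{1/2}y) \in \mathrm{col}(\xx'\dmat^{1/2}) = \mathrm{col}(\xx'\dmat\xx)$. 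Hence $b^{opt}$ satisfies the normal equations and, by convexity, attains the global minimum of $g$, which is the claim. An alternative write-up avoids the normal equations altogether: with $W := \dmat^{1/2}\xx$ and $w := \dmat^{1/2}y$, the minimizers of $\|w - Wb\|^2$ are exactly the $b$ with $Wb$ equal to the orthogonal projection of $w$ onto $\mathrm{col}(W)$, and since $W(W'W)^{(-)}W'$ equals that projection, $W b^{opt} = W(W'W)^{(-)}W'w$ is indeed that projection; but the normal-equations route is shorter to present.
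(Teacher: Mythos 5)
Your proposal is correct and follows essentially the same route as the paper: expand the quadratic $u'\dmat u$, derive the normal equations $\xx'\dmat\xx\, b=\xx'\dmat y$, and verify that the Moore--Penrose expression solves them via the defining property of the generalized inverse. The one place you go beyond the paper is in explicitly establishing that the normal equations are consistent — that $\xx'\dmat y\in\mathrm{col}(\xx'\dmat\xx)$, via the factorization through $\dmat^{1/2}$ — and that convexity makes the stationary point a global minimizer; the paper's proof leaves both points implicit (it premultiplies the normal equations by $(\xx'\dmat\xx)(\xx'\dmat\xx)^{(-)}$ and reads off the solution without checking that the resulting right-hand side actually equals $\xx'\dmat y$), so your write-up is the more complete version of the same argument.
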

\begin{proof}
	Starting with the finite sample variance of $\widehat{\delta}^{\tr,f}$ given in Theorem \ref{lemma.finiteVar}, we have 
	\begin{align*}
	n^{2} \V\left(\widehat{\delta}^{\tr,f}\right) = & u'\dmat u 
	\\ = & (y-\xx b^f)'\dmat (y-\xx b^f)
	\\ = & y' \dmat y - 2 y' \dmat \xx b^f  + {b^f}' \xx' \dmat \xx b^f
	\end{align*}
	To minimize, take the derivative with respect to $b^f$, set equal to zero, and then rearrange to obtain
	\begin{align}\label{foc}
	(\xx  ' \dmat \xx  )b^f= \xx'  \dmat y.
	\end{align}
	Premultiplying the equality by $(\xx  ' \dmat \xx  )(\xx  ' \dmat \xx  )^{(-)}$ we have
	\begin{align*}
	(\xx  ' \dmat \xx  )(\xx  ' \dmat \xx  )^{(-)}(\xx  ' \dmat \xx  )b^f=& (\xx  ' \dmat \xx  )(\xx  ' \dmat \xx  )^{(-)}\xx'  \dmat y
	\\ \implies (\xx  ' \dmat \xx  )b^f=& (\xx  ' \dmat \xx  )(\xx  ' \dmat \xx  )^{(-)}\xx'  \dmat y
	\end{align*}
	where the second line follows from the definition of a generalized inverse. This implies that
	\begin{align*}
	b^f = & (\xx  ' \dmat \xx  )^{(-)}\xx'  \dmat y
	\end{align*}
	is a solution.
\end{proof}

Defining in terms of a generalized inverse is not simply to account for a few rare cases where the usual inverse is not applicable. There are an infinite number of optimal $b^f$ in common settings, for example, any equal-$\pi_{1}$ design (such as complete randomization) with specification II. 

The choice of the Moore-Penrose generalized inverse, in particular, is arbitrary in a statistical sense. On the one hand, in the special case where $(\xx'\dmat\xx)$ is invertible, all generalized inverses produce the true inverse; in that case, there is a unique $b^f$ vector that minimizes the variance of $\widehat{\delta}^{\tr,f}$.  On the other hand, when $(\xx'\dmat\xx)$ is not invertible, different generalized inverses will lead to different coefficients, all of which are optimal in the sense of minimizing the variance of their respective conjugate ATE estimators. There are two key features recommending the Moore-Penrose generalized, however. First, it has the virtue of being commonly implemented in software. Second, in addition to the generalized inverse property ($\mathbf{a} \mathbf{a}^{(-)} \mathbf{a}= \mathbf{a}$), it has the reflexive property ($\mathbf{a}^{(-)} \mathbf{a} \mathbf{a}^{(-)}  = \mathbf{a}^{(-)} $) which is useful below. 


It may be helpful to discuss briefly what matrices like $\xx' \dmat \xx$ represent.  Just as $n^{-2}y' \dmat y$ gives the variance of HT estimator, so too is $n^{-2} \xx' \dmat \xx$ a variance-covariance matrix of HT estimators. An insight is that the optimal coefficient values are determined by the joint distribution of {\it estimated means} of $x$'s and $y$'s, rather than the joint distribution of $x$'s and $y$'s. This is a slightly different way of thinking about the job of regression adjustment compared to the intuition that one should attempt to approximate the conditional expectation of $y_{1i}$ (or $y_{0i}$) given $x_i$. Instead, one should be more concerned with the conditional expectation of $\widehat{\delta}^\tht$ given $\widehat{\delta}^\tht_\xx$. The former conditional expectation may be well estimated by the latest in machine learning techniques, but, depending on the design, it need not correspond to the latter.



\subsection{A Horvitz-Thompson estimator of $b^{opt}$, namely, {\hththt}
	\\ (-or- Horvitz-Thompson, Horvitz-Thompson, Horvitz-Thompson, trifecta)}\label{section.3HT}

In this section, a HT estimator of $b^{opt}$, given in equation (\ref{bopt}), is introduced.  It has the usual limitations of HT estimator, imprecision and a general lack of invariance to location shifts in $y$. However, the estimator serves as a conceptual starting point, and the refinement in the next subsection may prove more useful.  The coefficient estimator, call it {\hththt}, takes its name from the fact that its conjugate ATE estimator is a constellation of three HT estimators, as can be seen by examining form (\ref{greg.b}).

\begin{definition}[The {\hththt} optimal coefficient estimator]\label{def.3ht}
The ``{\hththt} optimal coefficient estimator" is
\emph{\begin{align}\label{optest}
\widehat{b}^{\thththt}:=&\left( \mathbb{x}' \dmat \mathbb{x} \right)^{(-)} \mathbb{x}' \dmat   \dipi \diR  y
\end{align}}where $(.)^{(-)}$ is the Moore-Penrose generalized inverse.
\end{definition}

\begin{remark}
Note that the estimator differs from a GLS-type estimator in a number of ways. First, the ``denominator" matrix $\left(\xx' \dmat \xx \right)$ is not random. Likewise, the ``numerator'' $\xx' \dmat \bpi^{-1} \R   y$ utilizes the fact that $\xx$ is completely observed. Moreover, with GLS the linear model is assumed and the analogue of the $\dmat$ matrix is designed to minimize the variance of the coefficient vector, which is consistent under the linear model. In the current framework, there is no linear model implied, there are no stochastic errors since potential outcomes are fixed and the $\dmat$ matrix serves to allow the construction of variance-covariance matrices for HT estimators. Precision of the coefficient itself is not guaranteed. Precision guarantees are asymptotic for the conjugate, \emph{$\widehat{\delta}^{\tr,\thththt}$}. 
\end{remark}


\begin{remark}
	Again, the use of the Moore-Penrose generalized inverse is for convenience.  Fortunately, regardless of the generalized inverse chosen in the construction of \emph{$\widehat{b}^{\thththt}$}, the conjugate estimators of the ATE are algebraically equivalent.
\end{remark}

\begin{remark}
	Like any HT estimator, it is unbiased.  To see this, simply take the expectation of (\ref{optest}) and recall that $\E[\R]=\bpi$. 
\end{remark}

\begin{lemma}\label{bopt.is.HT}
	The estimator of ${b}^{opt}$ defined in $(\ref{optest})$ is just an HT estimator of the column sums of the $ 2n \times k $ matrix
	\begin{align*}
	\mathbf{b} &:= \left(\left( \mathbb{x}' \dmat \mathbb{x} \right)^{(-)} \mathbb{x}' \dmat \emph{diag} (y)\right)'.
	\end{align*}
\end{lemma}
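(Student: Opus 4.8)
The plan is to unwind the definitions: ``an HT estimator of the column sums of $\mathbf{b}$'' should mean, in the paper's notation, the quantity $1_{\scriptscriptstyle 2n}'\bpi^{-1}\R\mathbf{b}$ — each of the $2n$ rows of the fixed matrix $\mathbf{b}$ is reweighted by the reciprocal of its assignment probability when observed, then summed — exactly as $\widehat{\delta}^{\tht}_\xx$ is built from the columns of $\xx$. Since $\mathbf{b}$ depends only on the nonrandom objects $\mathbb{x}$, $\dmat$, and $y$, this is a bona fide Horvitz--Thompson estimator (of a fixed target). Because $\widehat{b}^{\thththt}$ is written as a column vector, I would compare it with the transpose $\big(1_{\scriptscriptstyle 2n}'\bpi^{-1}\R\mathbf{b}\big)' = \mathbf{b}'\R\bpi^{-1}1_{\scriptscriptstyle 2n}$, using that $\R$ and $\bpi^{-1}$ are diagonal, hence symmetric.

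The one structural fact to isolate is that $\dmat$ is symmetric, being a variance--covariance matrix; hence $\mathbb{x}'\dmat\mathbb{x}$ is symmetric and so is its Moore--Penrose inverse, which gives $\mathbf{b}' = (\mathbb{x}'\dmat\mathbb{x})^{(-)}\mathbb{x}'\dmat\,\text{diag}(y)$. Substituting and then using that $\text{diag}(y)$, $\R$, and $\bpi^{-1}$ are all diagonal (so they commute) together with the identity $\text{diag}(y)\,1_{\scriptscriptstyle 2n} = y$ yields
\begin{align*}
\big(1_{\scriptscriptstyle 2n}'\bpi^{-1}\R\mathbf{b}\big)' &= \mathbf{b}'\R\bpi^{-1}1_{\scriptscriptstyle 2n} = (\mathbb{x}'\dmat\mathbb{x})^{(-)}\mathbb{x}'\dmat\,\text{diag}(y)\,\R\,\bpi^{-1}1_{\scriptscriptstyle 2n} \\ &= (\mathbb{x}'\dmat\mathbb{x})^{(-)}\mathbb{x}'\dmat\bpi^{-1}\R\,\text{diag}(y)\,1_{\scriptscriptstyle 2n} = (\mathbb{x}'\dmat\mathbb{x})^{(-)}\mathbb{x}'\dmat\bpi^{-1}\R y = \widehat{b}^{\thththt},
\end{align*}
which is precisely the estimator of Definition~\ref{def.3ht}. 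So $\widehat{b}^{\thththt}$ is the column-vector arrangement of the HT estimator of the column sums of $\mathbf{b}$.

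There is no genuine obstacle here; the argument is pure bookkeeping. The two places where a little care is needed are (i) fixing the row/column convention so that the transpose in the definition of $\mathbf{b}$ lines up with the column form of $\widehat{b}^{\thththt}$, and (ii) invoking symmetry of $\dmat$ and of the Moore--Penrose inverse of $\mathbb{x}'\dmat\mathbb{x}$ — after that, commutativity of diagonal matrices and $\text{diag}(y)1_{\scriptscriptstyle 2n}=y$ close the computation. I would also note that this lemma is just the coefficient-vector analogue of the remark that $\widehat{\delta}^{\tht}_\xx$ is a vector of HT estimators of (scaled) column sums of $\xx$, which makes transparent why $\widehat{b}^{\thththt}$ inherits the usual HT properties, in particular the unbiasedness recorded in the preceding remark.
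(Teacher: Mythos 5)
Your argument is correct and is essentially the paper's own proof, which consists precisely of the identity $\R\bpi^{-1}y=\mathrm{diag}(y)\,\R\bpi^{-1}1_{\scriptscriptstyle 2n}$ that you use; you merely spell out the surrounding bookkeeping. (Note the appeal to symmetry of $\dmat$ is not even needed: $\mathbf{b}'=\left(\xx'\dmat\xx\right)^{(-)}\xx'\dmat\,\mathrm{diag}(y)$ follows from double transposition alone.)
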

\begin{proof}
	The proof involves the recognition that 
	\begin{align*}
	\diR  \dipi y = &  \text{diag}(y) \diR  \dipi 1_{\scriptscriptstyle 2n}.
	\end{align*}
\end{proof}

\subsection{A generalized regression estimator of ${b^{opt}}$, namely, {\rara} 
	  \\(-or- Regression adjusted regression adjustment)}\label{section.2RA}

Recognizing $\widehat{b}^{\thththt}$ in equation (\ref{bopt}) as a HT estimator of the column sums of $\mathbf{b}$ in Lemma \ref{bopt.is.HT}, suggests that an improved estimation strategy may be to recursively apply generalized regression adjustment. No new principles are required. 

The regression adjusted regression coefficient will be called $\widehat{b}^{\trara}$. It takes its name from the fact that its conjugate, $\widehat{\delta}^{\tr,\trara}$, involves two levels of regression adjustment.

Subsequent to its definition, the invariance of its conjugate, $\widehat{\delta}^{\tr,\trara}$, will be proven. Its invariance is notable because its constituent parts are not themselves invariant. 

\begin{definition}[The {\rara} optimal coefficient estimator]\label{TSopt.definition}
The ``{\rara} optimal coefficient estimator" is given by 
\emph{
	\begin{align}\label{TSopt.equation}
\widehat{b}^{\trara}
 :=& \left(\xx ' \dmat \xx \right)^{(-)} \xx' \dmat \left( \bpi^{-1} \R y - \bpi^{-1} \R \xx\widehat{b}^{\pi wls} - \xx\widehat{b}^{\pi wls}\right)
 \\ =&\widehat{b}^{\thththt} -\left(\xx ' \dmat \xx \right)^{(-)}\left( \xx' \dmat \bpi^{-1} \R \xx -  \xx' \dmat \xx \right)\widehat{b}^{\pi wls} \nonumber.
\end{align}
}
where $\widehat{b}^{\pi wls}:=(\xx' \bpi^{-1}\R \xx)\xx' \bpi^{-1}\R y$ is WLS with $\bpi^{-1}$ weights.
\end{definition}
The first line of (\ref{TSopt.equation}) can be compared to (\ref{greg.a}) to make clear that this is regression adjusted regression adjustment. The second line will be at the crux of asymptotic arguments: as long as $\widehat{b}^{\thththt} \xrightarrow{p} {b}^{opt}$, $\widehat{b}^{\pi wls}\xrightarrow{p} b^{\pi wls}$ and $\xx \dmat \bpi^{-1} \R \xx - \xx \dmat \xx\xrightarrow{p} 0$ then $\widehat{b}^{\trara} \xrightarrow{p} {b}^{opt} $. 

Next, the invariance of the two-step optimal regression estimator will be demonstrated, with the help of the following two lemmas.

\begin{lemma}\label{binvariance}
	Let $y^*= e+fy$ where
	\begin{align*}
	e = & 
	c\left[ \begin{matrix}
	-1_{\scriptscriptstyle n} \\ 1_{\scriptscriptstyle n}
	\end{matrix}\right] 
	\end{align*}
	and $c$ and $f$ are arbitrary constants, then for any specification with a constant (e.g., specification $I$) or separate constants for treatment arms (e.g., specification II) the two-step optimal coefficient estimated using $y^*$ instead of $y$ is 
	\emph{\begin{align*}
	\widehat{b}^{\trara}{}^*=& f \widehat{b}^{\trara}+  c \left(\xx \dmat \xx \right)^{(-)}\xx \dmat 
	\left[ \begin{matrix}
	-1_{\scriptscriptstyle n} \\ 1_{\scriptscriptstyle n}
	\end{matrix}\right].
	\end{align*}}
\end{lemma}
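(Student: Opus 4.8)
The plan is to push the transformation $y\mapsto y^{*}=e+fy$ through each ingredient of $\widehat{b}^{\trara}$ on the first line of $(\ref{TSopt.equation})$ --- the Horvitz--Thompson term $\bpi^{-1}\R y$, the WLS fit $\xx\widehat{b}^{\pi wls}$, and its reweighted version $\bpi^{-1}\R\xx\widehat{b}^{\pi wls}$ --- using linearity and one structural fact. The fact is that the specifications named in the hypothesis are exactly those whose $\xx$ has $\left[\begin{matrix}-1_{\scriptscriptstyle n}\\ 1_{\scriptscriptstyle n}\end{matrix}\right]$ in its column span: for a single common intercept it is that very column, and for separate per-arm intercepts (specifications I and II) it is the sum of the two intercept columns (columns $1,2$ of $\xx_\sI$; columns $1,3$ of $\xx_\II$). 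Hence $e=\xx z_{e}$ for a \emph{fixed} vector $z_{e}$ (the scalar $c$ in the intercept coordinate(s), $0$ elsewhere).

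The computation then runs as follows. Because $\widehat{b}^{\pi wls}$ is linear in the response and a $\bpi^{-1}\R$-weighted least-squares projection leaves any vector already in $\mathrm{col}(\xx)$ fixed, the fit of $y^{*}=\xx z_{e}+fy$ is $\xx\widehat{b}^{\pi wls*}=e+f\,\xx\widehat{b}^{\pi wls}$, whence also $\bpi^{-1}\R\xx\widehat{b}^{\pi wls*}=\bpi^{-1}\R e+f\,\bpi^{-1}\R\xx\widehat{b}^{\pi wls}$; and trivially $\bpi^{-1}\R y^{*}=\bpi^{-1}\R e+f\,\bpi^{-1}\R y$. Substituting these into the parenthesis on the first line of $(\ref{TSopt.equation})$, the two copies of $\bpi^{-1}\R e$ cancel and what remains is $f\bigl(\bpi^{-1}\R y-\bpi^{-1}\R\xx\widehat{b}^{\pi wls}-\xx\widehat{b}^{\pi wls}\bigr)-e$. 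Premultiplying by $\bigl(\xx'\dmat\xx\bigr)^{(-)}\xx'\dmat$ turns the first group into $f\,\widehat{b}^{\trara}$ (by the definition of $\widehat{b}^{\trara}$) and the leftover $e$ into $\pm c\,\bigl(\xx'\dmat\xx\bigr)^{(-)}\xx'\dmat\left[\begin{matrix}-1_{\scriptscriptstyle n}\\ 1_{\scriptscriptstyle n}\end{matrix}\right]$, which is the asserted identity (the sign of the adjustment term should be reconciled with the sign convention chosen for $e$ in the statement).

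The only step requiring care is the claim that the WLS fit of $e$ is $e$ itself. When $\xx'\bpi^{-1}\R\xx$ is invertible --- which the displayed definition of $\widehat{b}^{\pi wls}$ presumes --- this is immediate from $e=\xx z_{e}$. Absent invertibility one must argue from $e=\xx z_{e}$ together with the fact that $v'\xx'\bpi^{-1}\R\xx v=0$ forces $\bpi^{-1}\R\xx v=0$, which still suffices for the two $\bpi^{-1}\R e$ terms to cancel in the previous paragraph. Beyond that the proof is just bookkeeping of which factors carry the random matrix $\R$ and which are constants --- which is also why the same argument goes through for \emph{any} specification containing this intercept structure, not merely I and II.
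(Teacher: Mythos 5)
Your proof is correct and follows essentially the same route as the paper's: both expand the estimator linearly in $y^{*}$ and use the fact that $e$ lies in the column span of $\xx$, so the $\bpi^{-1}\R$-weighted least-squares fit reproduces $e$ exactly, the two $\bpi^{-1}\R e$ terms cancel, and only $e$ is carried through $\left(\xx'\dmat\xx\right)^{(-)}\xx'\dmat$. The sign ambiguity you flag is not a flaw in your argument but an inconsistency in the displayed first line of (\ref{TSopt.equation}), whose final term should read $+\xx\widehat{b}^{\pi wls}$ to agree with (\ref{greg.c}) and with the second line of that same display; with the corrected sign the leftover term is $+e$ and the lemma's stated $+c$ coefficient follows.
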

\begin{proofatend}
	From the first line of (\ref{TSopt.equation}), the two-step optimal coefficient when inputting $y^*$ in place of $y$ can be written 
	\begin{align*}
	\widehat{b}^{\trara}{}^* =& \left(\xx ' \dmat \xx \right)^{(-)} \xx \dmat \left( \bpi^{-1} \R y^* - \left( \bpi^{-1} \R  - \I \right)\xx\widehat{b}^{\pi wls}{}^*\right)
	\\  =& \left(\xx ' \dmat \xx \right)^{(-)} \xx \dmat \left( \bpi^{-1} \R  - \left( \bpi^{-1} \R  - \I \right)\xx\left( \xx' \bpi^{-1} \R \xx \right)^{-1}\xx' \bpi^{-1} \R\right)\left(fy + c \left[ \begin{matrix}
	-1_{\scriptscriptstyle n} \\ 1_{\scriptscriptstyle n}
	\end{matrix}\right]\right)
	\\  =& f \widehat{b}^{\trara}+ c \left(\xx ' \dmat \xx \right)^{(-)} \xx \dmat \left( \bpi^{-1} \R  - \left( \bpi^{-1} \R  - \I \right)\xx\left( \xx' \bpi^{-1} \R \xx \right)^{-1} \xx' \bpi^{-1} \R\right)\left[ \begin{matrix}
	-1_{\scriptscriptstyle n} \\ 1_{\scriptscriptstyle n}
	\end{matrix}\right]	
	\\  =& f \widehat{b}^{\trara}+ c \left(\xx ' \dmat \xx \right)^{(-)} \xx \dmat \left( \bpi^{-1} \R  - \left( \bpi^{-1} \R  - \I \right)\xx\left( \xx' \bpi^{-1} \R \xx \right)^{-1} \xx' \bpi^{-1} \R\right)\left[ \begin{matrix}
	-1_{\scriptscriptstyle n} \\ 1_{\scriptscriptstyle n}
	\end{matrix}\right]	
	\\  =& f \widehat{b}^{\trara}+ c \left(\xx ' \dmat \xx \right)^{(-)} \xx \dmat \left( \bpi^{-1} \R\left[ \begin{matrix}
	-1_{\scriptscriptstyle n} \\ 1_{\scriptscriptstyle n}
	\end{matrix}\right]	  - \left( \bpi^{-1} \R  - \I \right)\left[ \begin{matrix}
	-1_{\scriptscriptstyle n} \\ 1_{\scriptscriptstyle n}
	\end{matrix}\right]	\right)
		\\  =& f \widehat{b}^{\trara}+ c \left(\xx ' \dmat \xx \right)^{(-)} \xx \dmat \left[ \begin{matrix}
		-1_{\scriptscriptstyle n} \\ 1_{\scriptscriptstyle n}
		\end{matrix}\right]
	\end{align*}
\end{proofatend}
\begin{proof}
	Provided in Appendix.
\end{proof}

\begin{lemma}\label{lem.cons.y}
	Let $y_1=y_0=1_{\scriptscriptstyle n}$, then the finite-sample optimal coefficient is $b^{opt}=\left(\xx \dmat \xx \right)^{(-)}\xx' \dmat \left[ \begin{matrix}
	-1_{\scriptscriptstyle n} \\ 1_{\scriptscriptstyle n}
	\end{matrix}\right]$ and the conjugate of this fixed value has expectation zero and variance zero.
\end{lemma}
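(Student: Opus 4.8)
The statement bundles three claims --- the closed form for $b^{opt}$, that its conjugate has mean zero, and that it has variance zero --- and the plan is to dispatch them in that order, since each is short. When $y_1=y_0=1_{\scriptscriptstyle n}$ the stacked outcome vector of Section~\ref{section.framework} is $y=\left[\begin{matrix} -1_{\scriptscriptstyle n} \\ 1_{\scriptscriptstyle n}\end{matrix}\right]$, so substituting this $y$ into the formula $b^{opt}=(\xx'\dmat\xx)^{(-)}\xx'\dmat y$ of Theorem~\ref{thrm.bopt} yields the displayed expression immediately. For the expectation of the conjugate $\widehat{\delta}^{\tr,f}$ with fixed coefficient $b^{f}=b^{opt}$, note that the estimand here is $\delta=n^{-1}1'_{\scriptscriptstyle 2n}y=n^{-1}(-n+n)=0$; writing the conjugate in form~(\ref{greg.b}) as $\widehat{\delta}^\tht-\widehat{\delta}_\xx^\tht b^{opt}$ and using $\E[\widehat{\delta}^\tht]=\delta$ together with $\E[\widehat{\delta}_\xx^\tht]=0$ (both recorded just after Definition~\ref{GenReg}), its expectation equals $\delta=0$.

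For the variance, Lemma~\ref{lemma.finiteVar} gives $\V(\widehat{\delta}^{\tr,f})=n^{-2}u'\dmat u$ with $u:=y-\xx b^{opt}$. Since $\dmat$ is a variance--covariance matrix it is positive semidefinite, so $(y-\xx b)'\dmat(y-\xx b)\ge 0$ for every $b$, and by Theorem~\ref{thrm.bopt} this quadratic form is minimized at $b^{opt}$. The key remaining observation is that $\left[\begin{matrix} -1_{\scriptscriptstyle n} \\ 1_{\scriptscriptstyle n}\end{matrix}\right]$ lies in the column space of $\xx$: for any specification carrying a constant or separate constants for the two arms (the setting of Lemma~\ref{binvariance}) it is either an intercept column or the sum of the two arm-specific intercept columns, so there exists $b^{*}$ with $\xx b^{*}=y$, and then $(y-\xx b^{*})'\dmat(y-\xx b^{*})=0$. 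Hence the minimum of the quadratic form is $0$, so $u'\dmat u=0$ and the variance vanishes; together with the zero expectation this also shows the conjugate is identically $0$.

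The proof has no real obstacle beyond this last bookkeeping point --- the one thing one must notice is that $y=\left[\begin{matrix} -1_{\scriptscriptstyle n} \\ 1_{\scriptscriptstyle n}\end{matrix}\right]$ lies in the column span of $\xx$, which is precisely what forces the minimized quadratic form to zero and explains why the lemma is stated for intercept-bearing specifications. If one preferred to avoid citing the minimizing property of $b^{opt}$, one could instead substitute $\xx'\dmat y=\xx'\dmat\xx b^{*}$ into the definition of $b^{opt}$ and use the generalized-inverse identity $\mathbf{a}\mathbf{a}^{(-)}\mathbf{a}=\mathbf{a}$ (after factoring $\dmat=G'G$) to conclude directly that $\dmat\xx b^{opt}=\dmat y$, hence $\dmat u=0$; but routing the argument through Theorem~\ref{thrm.bopt} is the shorter path.
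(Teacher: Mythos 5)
The paper states Lemma \ref{lem.cons.y} without any proof, so there is no in-paper argument to compare against; your proposal is correct and supplies the missing argument. The three steps are all sound: the closed form is just Theorem \ref{thrm.bopt} evaluated at $y=\left[\begin{matrix}-1_{\scriptscriptstyle n}\\ 1_{\scriptscriptstyle n}\end{matrix}\right]$; unbiasedness of the fixed-coefficient estimator plus $\delta=n^{-1}(-n+n)=0$ gives the zero expectation; and the variance argument correctly reduces to showing the minimized quadratic form $u'\dmat u$ is zero because $y$ lies in the column space of $\xx$, with $\dmat$ positive semi-definite as a covariance matrix. The one point worth making explicit, which you do flag, is that the variance-zero claim is \emph{not} true for an arbitrary covariate matrix $\xx$: it requires that $\left[\begin{matrix}-1_{\scriptscriptstyle n}\\ 1_{\scriptscriptstyle n}\end{matrix}\right]$ be spanned by the columns of $\xx$ (or at least that the residual lie in the null space of $\dmat$, which fails for, e.g., Bernoulli designs without intercepts). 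The lemma as stated omits this hypothesis, but it holds in every context where the lemma is invoked (specifications with an intercept per arm, as in Lemma \ref{binvariance} and the invariance theorem for $\widehat{\delta}^{\tr,\trara}$), so your reading is the right one and your proof is the natural repair of the statement.
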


\begin{theorem}
 The {\rara} estimator of the ATE, \emph{$\widehat{\delta}^{\tr,\trara}$}, is invariant to scale changes in $y$.
\end{theorem}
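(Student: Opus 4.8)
The plan is to write the {\rara} conjugate in the compact form (\ref{greg.b}), namely $\widehat{\delta}^{\tr,\trara}=\widehat{\delta}^{\tht}-\widehat{\delta}_{\xx}^{\tht}\,\widehat{b}^{\trara}$, and then track how each of its three factors responds when $y$ is replaced by $y^{*}=e+fy$, with $e=c\!\left[\begin{matrix}-1_{\scriptscriptstyle n}\\ 1_{\scriptscriptstyle n}\end{matrix}\right]$ and $c,f$ arbitrary constants (a pure scale change being the case $c=0$, but the argument will also cover the location shift). The factor $\widehat{\delta}_{\xx}^{\tht}$ involves only $\xx$ and so is unchanged; linearity of the HT operator in its outcome argument gives $\widehat{\delta}^{\tht}{}^{*}=f\,\widehat{\delta}^{\tht}+n^{-1}1'_{\scriptscriptstyle 2n}\bpi^{-1}\R e$; and Lemma \ref{binvariance} gives $\widehat{b}^{\trara}{}^{*}=f\,\widehat{b}^{\trara}+c\,(\xx'\dmat\xx)^{(-)}\xx'\dmat\!\left[\begin{matrix}-1_{\scriptscriptstyle n}\\ 1_{\scriptscriptstyle n}\end{matrix}\right]$.

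Substituting these into (\ref{greg.b}) and collecting the terms carrying a factor $f$, one obtains $\widehat{\delta}^{\tr,\trara}{}^{*}=f\,\widehat{\delta}^{\tr,\trara}+\Delta$, where the leftover is $\Delta=n^{-1}1'_{\scriptscriptstyle 2n}\bpi^{-1}\R e-c\,\widehat{\delta}_{\xx}^{\tht}(\xx'\dmat\xx)^{(-)}\xx'\dmat\!\left[\begin{matrix}-1_{\scriptscriptstyle n}\\ 1_{\scriptscriptstyle n}\end{matrix}\right]$. The crux is to recognize $\Delta$: since $e=c\!\left[\begin{matrix}-1_{\scriptscriptstyle n}\\ 1_{\scriptscriptstyle n}\end{matrix}\right]$, factoring $c$ out and reading $n^{-1}1'_{\scriptscriptstyle 2n}\bpi^{-1}\R\!\left[\begin{matrix}-1_{\scriptscriptstyle n}\\ 1_{\scriptscriptstyle n}\end{matrix}\right]$ as $\widehat{\delta}^{\tht}$ for the degenerate schedule $y_{0}=y_{1}=1_{\scriptscriptstyle n}$ shows that $\Delta=c\left(\widehat{\delta}^{\tht}-\widehat{\delta}_{\xx}^{\tht}\,b^{opt}\right)$ evaluated at that schedule with the fixed coefficient $b^{opt}=(\xx'\dmat\xx)^{(-)}\xx'\dmat\!\left[\begin{matrix}-1_{\scriptscriptstyle n}\\ 1_{\scriptscriptstyle n}\end{matrix}\right]$ supplied by Lemma \ref{lem.cons.y}; that is, $\Delta$ is exactly $c$ times a fixed-coefficient generalized regression estimator in the form (\ref{greg.b}). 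By Lemma \ref{lem.cons.y} this estimator has expectation zero and variance zero, hence is identically $0$, so $\Delta=0$ and $\widehat{\delta}^{\tr,\trara}{}^{*}=f\,\widehat{\delta}^{\tr,\trara}$. Since the ATE of the transformed schedule is $n^{-1}1'_{\scriptscriptstyle 2n}(e+fy)=f\delta$ — the $e$ piece contributes $c\,1'_{\scriptscriptstyle 2n}\!\left[\begin{matrix}-1_{\scriptscriptstyle n}\\ 1_{\scriptscriptstyle n}\end{matrix}\right]=0$ — the estimator transforms exactly as the estimand does, which is the asserted scale invariance; taking $f=1$ additionally shows invariance to the location shift $e$.

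I expect the main obstacle to be the bookkeeping in the second paragraph: the residual $\Delta$ initially looks like an unrelated mixture of an HT term and a $\dmat$-weighted term, and the real content is in seeing that it reassembles into (a constant multiple of) precisely the object that Lemma \ref{lem.cons.y} annihilates. Everything else — linearity of $\widehat{\delta}^{\tht}$ in $y$, the $y$-independence of $\widehat{\delta}_{\xx}^{\tht}$, and the substitution from Lemma \ref{binvariance} — is routine. One small point to state carefully is that "expectation zero and variance zero" for the degenerate conjugate means it equals the constant $0$ (almost surely), which is what licenses setting $\Delta=0$.
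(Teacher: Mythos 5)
Your proposal is correct and follows essentially the same route as the paper: both substitute the transformed coefficient from Lemma \ref{binvariance} into the generalized regression form, collect the $f$ terms, and recognize the leftover as (a constant multiple of) the degenerate-schedule conjugate that Lemma \ref{lem.cons.y} shows is identically zero. The only cosmetic difference is that you organize the bookkeeping through form (\ref{greg.b}) while the paper uses the algebraically equivalent form (\ref{greg.c}).
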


\begin{proof}
	As above, let $y^*= e+fy$ where
	\begin{align*}
	e = & 
	c\left[ \begin{matrix}
	-1_{\scriptscriptstyle n} \\ 1_{\scriptscriptstyle n}
	\end{matrix}\right] 
	\end{align*}
	and $c$ and $f$ are arbitrary constants then
	\begin{align*}
	\widehat{\delta}^{\tr ,\trara}{}^* =&  n^{-1} 1'_{\scriptscriptstyle 2n}\bpi^{-1} \R (y^*-\xx \widehat{b}^{\trara}{}^*  ) +n^{-1} 1'_{\scriptscriptstyle 2n} \xx \widehat{b}^{\trara}{}^* \nonumber
	\\ =&  n^{-1} 1'_{\scriptscriptstyle 2n}\bpi^{-1} \R \left(f(y-\xx \widehat{b}^{\trara})+ e-\xx \left(\xx' \dmat \xx \right)^{(-)}\xx ' \dmat e \right) +n^{-1} 1'_{\scriptscriptstyle 2n} \left(f\xx \widehat{b}^{\trara}+\xx \left(\xx' \dmat \xx \right)^{(-)}\xx ' \dmat e\right) 
	\\ =&  f \widehat{\delta}^{\tr,\trara}+n^{-1} 1'_{\scriptscriptstyle 2n}\bpi^{-1} \R \left(e-\xx \left(\xx' \dmat \xx \right)^{(-)}\xx ' \dmat e \right) +n^{-1} 1'_{\scriptscriptstyle 2n} \left(\xx \left(\xx' \dmat \xx \right)^{(-)}\xx ' \dmat e\right)
	\\ =&  f \widehat{\delta}^{\tr,\trara}.
	\end{align*}
The last line follows from Lemma \ref{lem.cons.y}
\end{proof}

\begin{theorem}
	 The {\rara} estimator of the ATE, \emph{$\widehat{\delta}^{\tr,\trara}$}, is invariant to scale changes in $\xx$.  
\end{theorem}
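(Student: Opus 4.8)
The plan is to replace $\xx$ throughout by $\xx^{*} := \xx\mathbf{A}$, where $\mathbf{A}$ is an invertible $l\times l$ matrix (a diagonal $\mathbf{A}$ captures the column-by-column rescalings meant by ``scale changes,'' but nothing below uses diagonality), and to show $\widehat{\delta}^{\tr,\trara}{}^{*} = \widehat{\delta}^{\tr,\trara}$. The organizing remark is that form (\ref{greg.a}), $\widehat{\delta}^{\tr} = n^{-1}1'_{\scriptscriptstyle 2n}(\bpi^{-1}\R y - \bpi^{-1}\R\xx\widehat{b} - \xx\widehat{b})$, involves the covariates and the coefficient only through the fitted vector $\xx\widehat{b}$, and the term $\bpi^{-1}\R y$ does not involve $\xx$ at all. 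So it suffices to prove the fitted vector is invariant, i.e. $\xx^{*}\widehat{b}^{\trara}{}^{*} = \xx\widehat{b}^{\trara}$; the conclusion then follows by substituting into (\ref{greg.a}) (or, equivalently, into (\ref{greg.c}), since $\widehat{u}=y-\xx\widehat{b}^{\trara}$ is likewise a function of $\xx\widehat{b}^{\trara}$ and $y$).

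First I would dispose of the inner ($\pi$-WLS) step. A single line of ordinary-inverse algebra gives $\widehat{b}^{\pi wls}{}^{*} = \bigl(\mathbf{A}'\xx'\bpi^{-1}\R\xx\mathbf{A}\bigr)^{-1}\mathbf{A}'\xx'\bpi^{-1}\R y = \mathbf{A}^{-1}\widehat{b}^{\pi wls}$ (the normal-equations matrix is nonsingular after the transformation precisely when it was before), hence $\xx^{*}\widehat{b}^{\pi wls}{}^{*} = \xx\mathbf{A}\mathbf{A}^{-1}\widehat{b}^{\pi wls} = \xx\widehat{b}^{\pi wls}$. Consequently the vector passed into the outer generalized-regression step in the first line of (\ref{TSopt.equation}), namely $w := \bpi^{-1}\R y - \bpi^{-1}\R\xx\widehat{b}^{\pi wls} - \xx\widehat{b}^{\pi wls}$, is invariant: $w^{*}=w$. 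Since $\widehat{b}^{\trara} = (\xx'\dmat\xx)^{(-)}\xx'\dmat\, w$, proving $\xx^{*}\widehat{b}^{\trara}{}^{*} = \xx\widehat{b}^{\trara}$ reduces to the matrix identity
\begin{align*}
\xx\mathbf{A}\,\bigl((\xx\mathbf{A})'\dmat\,\xx\mathbf{A}\bigr)^{(-)}(\xx\mathbf{A})'\dmat \;=\; \xx(\xx'\dmat\xx)^{(-)}\xx'\dmat .
\end{align*}

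This identity is the crux and the step I expect to be the main obstacle: it says the ``generalized hat operator'' $\xx(\xx'\dmat\xx)^{(-)}\xx'\dmat$ depends on $\xx$ only through its column space, which the invertible $\mathbf{A}$ does not change. The route I would take is to factor the positive semidefinite design matrix as $\dmat = \mathbf{T}\mathbf{T}'$ and set $\mathbf{N} := \mathbf{T}'\xx$, so that $\xx'\dmat\xx = \mathbf{N}'\mathbf{N}$ and $\xx'\dmat = \mathbf{N}'\mathbf{T}'$; the standard Moore--Penrose identity $(\mathbf{N}'\mathbf{N})^{(-)}\mathbf{N}' = \mathbf{N}^{(-)}$ then yields $\xx(\xx'\dmat\xx)^{(-)}\xx'\dmat = \xx\mathbf{N}^{(-)}\mathbf{T}'$, and it remains to verify $\xx\mathbf{A}(\mathbf{N}\mathbf{A})^{(-)}\mathbf{T}' = \xx\mathbf{N}^{(-)}\mathbf{T}'$. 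This is where the reflexive property of the Moore--Penrose inverse flagged just after Theorem \ref{thrm.bopt}, together with $\text{col}(\mathbf{N}\mathbf{A}) = \text{col}(\mathbf{N})$ and the fact that $\mathbf{N}\mathbf{N}^{(-)}$ is the orthogonal projector onto $\text{col}(\mathbf{N})$, do the work; when $\dmat$ is nonsingular this is just the familiar statement that a projector is determined by its range, and the part that needs care is carrying it through with the generalized inverse.

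Finally I would assemble the pieces. Having established $\xx^{*}\widehat{b}^{\trara}{}^{*} = \xx\widehat{b}^{\trara}$, substitution into (\ref{greg.a}) gives
\begin{align*}
\widehat{\delta}^{\tr,\trara}{}^{*}
&= n^{-1}1'_{\scriptscriptstyle 2n}\bigl(\bpi^{-1}\R y - \bpi^{-1}\R\xx^{*}\widehat{b}^{\trara}{}^{*} - \xx^{*}\widehat{b}^{\trara}{}^{*}\bigr) \\
&= n^{-1}1'_{\scriptscriptstyle 2n}\bigl(\bpi^{-1}\R y - \bpi^{-1}\R\xx\widehat{b}^{\trara} - \xx\widehat{b}^{\trara}\bigr)
= \widehat{\delta}^{\tr,\trara},
\end{align*}
which is the claim.
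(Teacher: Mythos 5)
Your overall route is the same as the paper's: establish $\xx^{*}\widehat{b}^{\pi wls}{}^{*}=\xx\widehat{b}^{\pi wls}$ so that the inner argument $w$ is unchanged, then reduce everything to the behavior of the outer operator $\xx(\xx'\dmat\xx)^{(-)}\xx'\dmat$ under $\xx\mapsto\xx\mathbf{A}$; the WLS step and the final assembly are fine. The gap sits exactly at the step you flag as the crux, and it is not merely unfinished: the identity you reduce to, $\xx\mathbf{A}\bigl((\xx\mathbf{A})'\dmat\,\xx\mathbf{A}\bigr)^{(-)}(\xx\mathbf{A})'\dmat=\xx(\xx'\dmat\xx)^{(-)}\xx'\dmat$ (equivalently, invariance of the fitted vector $\xx\widehat{b}^{\trara}$), is false in general when $\xx'\dmat\xx$ is singular. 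In your own notation, the projector fact controls $\mathbf{N}\mathbf{A}(\mathbf{N}\mathbf{A})^{(-)}=\mathbf{N}\mathbf{N}^{(-)}$, but the quantity you need is $\xx\mathbf{A}(\mathbf{N}\mathbf{A})^{(-)}$, with $\xx$ rather than $\mathbf{N}=\mathbf{T}'\xx$ on the left, and the reverse-order law $(\mathbf{N}\mathbf{A})^{(-)}=\mathbf{A}^{-1}\mathbf{N}^{(-)}$ holds only when $\mathbf{N}$ has full column rank. The singular case is not exotic here: the paper itself notes that $\xx'\dmat\xx$ is singular under, e.g., complete randomization with specification II, which is precisely why the generalized inverse appears in the definition of $\widehat{b}^{\trara}$ in the first place.

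The repair is short but is a different kind of argument, probabilistic rather than algebraic. Since $\mathbf{N}\bigl(\mathbf{A}(\mathbf{N}\mathbf{A})^{(-)}-\mathbf{N}^{(-)}\bigr)=\mathbf{N}\mathbf{A}(\mathbf{N}\mathbf{A})^{(-)}-\mathbf{N}\mathbf{N}^{(-)}=0$, every column $v$ of the discrepancy satisfies $\mathbf{T}'\xx v=0$, hence $\dmat\xx v=0$. The estimator depends on the fitted vector only through $\widehat{\delta}^{\tht}_{\xx}\widehat{b}$, and for such a direction $\widehat{\delta}^{\tht}_{\xx v}$ is a mean-zero HT estimator with variance $n^{-2}(\xx v)'\dmat(\xx v)=0$, so it equals zero with probability one. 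Thus $\widehat{\delta}^{\tr,\trara}$ is invariant almost surely even though the fitted vector need not be. For what it is worth, the paper's own proof leans on the same delicate point: it asserts $\bigl(\f'\xx'\dmat\xx\f\bigr)^{(-)}=\f^{-1}(\xx'\dmat\xx)^{(-)}\f'^{-1}$, which yields a generalized inverse of $\f'\xx'\dmat\xx\f$ but not the Moore--Penrose one. So your instinct that this is where the care is needed is correct; you just have to supply the null-space step rather than hope the projector identity closes it.
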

\begin{proof}
	Let $\f$ be a $(l \times l)$ transformation matrix such that $\f^{-1}$ exists and let $\xx^{*}=\xx \f$.  Next, write the two-step optimal estimator of the ATE computed with $\xx^{*}$ in place of $\xx$ as
	\begin{align*}
	\widehat{\delta}^{\tr, \trara}{}^*= \widehat{\delta}^{\tht}-n^{-1}1_{\scriptscriptstyle 2n}\left(\bpi^{-1} \R - \I \right)\xx^* \left({\xx^*}'\dmat \xx^* \right)^{(-)}{\xx^*}'\dmat \left( \bpi^{-1} \R y -\left(\bpi^{-1}\R - \I \right) \xx^*\widehat{b}^{\pi wls}{}^* \right)  
	\end{align*}
	and note that $\xx^*{\widehat{b}^{\pi {wls}}}{}^*=\xx\widehat{b}^{\pi wls}$ by the invariance of WLS.  Now note that 
	\begin{align*}
	\xx^* \left({\xx^*}'\dmat \xx^* \right)^{(-)}{\xx^*}' =&  \xx  \f \left(\f' {\xx }'\dmat \xx  \f \right)^{(-)}\f' {\xx }'
	\\ =&  \xx \f \hspace{1mm} \f^{-1} \left( {\xx }'\dmat \xx  \right)^{(-)}\f'^{-1} \f' {\xx }'
	\\ =&  \xx \left( {\xx }'\dmat \xx  \right)^{(-)} {\xx }'
	\end{align*}
	where the second line follows from the properties of generalized inverses \citep{campbellmeyer}. Hence, $\widehat{\delta}^{\tr,\trara}{}^*=\widehat{\delta}^{\tr,\trara}$.
\end{proof}

\begin{remark}
	Given its definition, \emph{${\widehat{\delta}^{\tr,\trara}}:=\widehat{\delta}^\tht -\widehat{\delta}^\tht_\xx \widehat{b}^{\trara}$}, invariance to location shifts in $y$ and $\xx$ not immediately obvious because the constituent parts, \emph{($\widehat{\delta}^\tht, \widehat{\delta}^\tht_\xx$, and $\widehat{b}^{\trara})$}, are not generally invariant. By contrast, the optimal generalized regression estimator, \emph{$\widehat{\delta}^{\tr,\thththt}$}, is only invariant to location shifts in special cases (e.g., complete randomization).
\end{remark}


\subsection{Conclusions about the proposed optimal estimators, $\widehat{\delta}^{\tr,\thththt}$ and $\widehat{\delta}^{\tr,\trara}$}

Estimators $\widehat{\delta}^{\tr,\thththt}$ and $\widehat{\delta}^{\tr,\trara}$ have the virtue of being asymptotically optimal for arbitrary designs. However, asymptotic optimality does not necessarily imply good finite sample performance, and $\widehat{\delta}^{\tr,\thththt}$ is not recommended in practice because it is unnecessarily imprecise and not generally invariant to location shifts in $y$. $\widehat{\delta}^{\tr,\trara}$ may be useful in some cases.

Alternatives to $\widehat{\delta}^{\tr,\thththt}$ and $\widehat{\delta}^{\tr,\trara}$ are available for specific designs. In Section \ref{section.completelyrand}, complete randomization is considered, followed by Section \ref{section.cluster} on clustered randomization. The sections show how to derive optimal estimators specific to those designs from this framework. Some of the results are known, but the derivation helps connect the framework herein to prior work \citep[e.g.][]{lin}.


\section{Variance Estimation \\ (-or, more exactly- Variance Bound Estimation)}\label{section.var.est}

In general, the variance expressions of the form (\ref{HTvar}) and (\ref{eq.finiteVar}) are not identified. This is due to the fact that some pairs of elements in the vector $y$ can never be jointly observed, and hence, for example, some terms in the quadratic $n^{-2} y'\dmat y$ are never observable. One reason is that a given unit's potential outcomes, $y_{0i}$ and $y_{1i}$, can never be observed together. This problem is referred to as the ``fundamental problem of causal inference" \citep{holland}.  But other design features, such as clustering or pair randomization, render various combinations of potential outcomes jointly unobservable as well.

Starting with Neyman (1923) one proposed solution to unidentified variance has been to estimate a {\it variance bound}, i.e., a quantity that is known to be greater than the variance, but which is identified. Conservative inference follows.

In Section \ref{section.boundingvar}, the terms ``variance bound" and ``identified variance bound" are defined in terms of the current framework. Framing the problem in matrix terms facilitates insight and leads to methods of comparing alternative bounds.  In Section \ref{section.ASbound}, an important variance bound that has the virtue of being identified in any identified design, the Aronow-Samii (AS) bound, is defined. The AS bound serves as a benchmark against which other potential bounds might be compared. After that, Section \ref{section.Mbound} proposes an algorithm for finding an alternative variance bound, which can improve upon the AS bound substantially in some cases.  Finally, Section \ref{section.EstimatingBound} addresses the subject of how to estimate a variance bound, both for HT estimators and generalized regression estimators. And finally, in Section \ref{section.BorrowingBound} a tighter variance bound specifically for the {\rara} estimator is proposed that can substantially narrow intervals for that estimator.

\subsection{Bounding the variance}\label{section.boundingvar}

\begin{definition}[Variance bound]
	For an arbitrary $2n \times 2n$ matrix $\tilde{\dmat}$, let $n^{-2} y'\tilde{ \dmat }y$ be a ``bound" for the variance $n^{-2}y'\dmat y$ if, for all $y\in\mathds{R}^{2n}$, $n^{-2}y'\dmat y \leq n^{-2} y'\tilde{\dmat}y$. 
\end{definition}

\begin{definition}[Tighter variance bound]\label{def.tighter}
	For two $2n \times 2n$ matrices, $\tilde{\dmat}^a$ and $\tilde{\dmat}^b$, that correspond to different variance bounds, $\tilde{\dmat}^a$ corresponds to a ``tighter variance bound" if for all $y \in \mathds{R}^{2n}$ $n^{-2}y'\tilde{\dmat}^ay \leq n^{-2} y'\tilde{\dmat}^by$. 
\end{definition}

\begin{definition}[Tighter variance bound under the sharp null]
	First, let $y_0$ and $y_1$ represent length-$n$ vectors of control and treatment potential outcomes, respectively, and recall that under the sharp null $y_1=y_0$. For two matrices $2n \times 2n$ matrices, $\tilde{ \dmat }^a$ and $\tilde{ \dmat }^b$, that correspond to different variance bounds, $\tilde{\dmat}^a$ corresponds to a ``tighter variance bound under the sharp null" if, for all $y_1=y_0 \in \mathds{R}^{n}$, $n^{-2}y'\tilde{\dmat}^a y \leq n^{-2} y'\tilde{\dmat}^b y$. 
\end{definition}

\begin{lemma}\label{psd}
	$n^{-2} y'\tilde{ \dmat }y$ is a bound for the variance $n^{-2}y'\dmat y$ if and only if matrix $\tilde{ \dmat }-\dmat$ is positive semi-definite. 
\end{lemma}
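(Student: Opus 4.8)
The plan is to strip away everything inessential and reduce the claim to the definition of positive semi-definiteness. First I would observe that the scalar $n^{-2}$ is strictly positive, so for a fixed $y$ the inequality $n^{-2} y'\dmat y \le n^{-2} y'\tilde{\dmat} y$ holds if and only if $y'\dmat y \le y'\tilde{\dmat} y$, i.e.\ if and only if $y'(\tilde{\dmat} - \dmat)y \ge 0$. Consequently $n^{-2} y'\tilde{\dmat} y$ is a variance bound in the sense of Definition (Variance bound) --- that is, the inequality holds for \emph{all} $y \in \mathds{R}^{2n}$ --- if and only if $y'(\tilde{\dmat} - \dmat)y \ge 0$ for all $y \in \mathds{R}^{2n}$, which is precisely the statement that $\tilde{\dmat} - \dmat$ is positive semi-definite. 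Both directions of the ``if and only if'' then fall out of this single chain of equivalences, so the proof is only a few lines.

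The one point that warrants a word of care is the symmetry convention behind ``positive semi-definite.'' The matrix $\dmat = \V\left(1'_{\scriptscriptstyle 2n}\bpi^{-1}\R\right)$ is a variance--covariance matrix and hence symmetric, and $\tilde{\dmat}$ may be taken symmetric without loss of generality, since the quadratic form $y'My$ depends only on the symmetric part $\tfrac12(M + M')$ of $M$. With that understanding, ``$\tilde{\dmat} - \dmat$ is positive semi-definite'' is unambiguous and coincides literally with the condition ``$y'(\tilde{\dmat} - \dmat)y \ge 0$ for all real $y$'' used above; if one declines to assume $\tilde{\dmat}$ symmetric, the statement should simply be read as one about its symmetric part, and the argument is unchanged.

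I do not anticipate any genuine obstacle: the lemma is essentially the definition of a positive semi-definite matrix re-expressed in the notation of this framework. The only thing I would want to make explicit for the reader is why passing to ``for all $y$'' is legitimate --- namely that Definition (Variance bound) already quantifies over every $y \in \mathds{R}^{2n}$, with no restriction to vectors that arise as admissible potential-outcome schedules --- which is exactly what permits the clean positive semi-definiteness characterization rather than some weaker condition on a sub-cone of $\mathds{R}^{2n}$.
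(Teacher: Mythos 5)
Your proposal is correct and follows essentially the same route as the paper's own proof: both reduce the bound condition to $y'(\tilde{\dmat}-\dmat)y \geq 0$ for all $y \in \mathds{R}^{2n}$ and identify this with positive semi-definiteness. Your added remarks on the symmetry convention and on the quantification over all of $\mathds{R}^{2n}$ are careful but not a different argument.
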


\begin{proof}
	By the definition of a bound, $n^{-2} y'\tilde{\dmat}y - n^{-2}y'\dmat y \geq 0$ for all $y\in \mathds{R}^n$. This implies that $n^{-2} y'(\tilde{\dmat}-\dmat ) y \geq 0$, i.e., that $ \tilde{\dmat}-\dmat  $ is positive semi-definite.  
\end{proof}

\begin{corollary}\label{tighter.bound}
	For two $2n \times 2n$ matrices that define variance bounds, $\tilde{\dmat}^a$ and $\tilde{\dmat}^b$, $\tilde{\dmat}^a$ corresponds to a ``tighter variance bound" if and only if the matrix $ \tilde{\dmat}^b-\tilde{\dmat}^a$ is positive semi-definite.
\end{corollary}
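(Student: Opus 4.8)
The plan is to unwind the definition of ``tighter variance bound'' and recognize that it is literally the hypothesis of Lemma~\ref{psd} with the pair $(\tilde{\dmat}^a,\tilde{\dmat}^b)$ playing the roles of $(\dmat,\tilde{\dmat})$. So the corollary should fall out in a couple of lines, with no genuine obstacle.

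First I would start from Definition~\ref{def.tighter}: $\tilde{\dmat}^a$ corresponds to a tighter variance bound than $\tilde{\dmat}^b$ exactly when $n^{-2}y'\tilde{\dmat}^a y \le n^{-2} y'\tilde{\dmat}^b y$ for every $y\in\mathds{R}^{2n}$. Multiplying through by $n^{2}>0$ and rearranging, this is equivalent to $y'(\tilde{\dmat}^b-\tilde{\dmat}^a)y \ge 0$ for all $y\in\mathds{R}^{2n}$. Then I would invoke the same elementary fact used in the proof of Lemma~\ref{psd}, namely that a symmetric matrix $M$ satisfies $y'My\ge 0$ for all real $y$ if and only if $M$ is positive semi-definite; applying it with $M=\tilde{\dmat}^b-\tilde{\dmat}^a$ yields the stated equivalence. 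Equivalently, one can simply cite Lemma~\ref{psd} verbatim, reading $\tilde{\dmat}^a$ as the ``variance'' matrix and $\tilde{\dmat}^b$ as a ``bound'' for it.

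The only point meriting a word of care is that positive semi-definiteness is conventionally a property of symmetric matrices; if $\tilde{\dmat}^a,\tilde{\dmat}^b$ were not assumed symmetric one would replace $M$ by its symmetrization $(M+M')/2$, which leaves $y'My$ unchanged. In the present setting these matrices are built from variance-covariance matrices ($\dmat$ and its bounds), hence already symmetric, so this is a non-issue. I do not anticipate any hard step: the corollary is a direct translation of Definition~\ref{def.tighter} through Lemma~\ref{psd}.
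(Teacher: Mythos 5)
Your proof is correct and matches the paper's treatment: the paper gives no separate argument for this corollary precisely because it is, as you observe, Lemma \ref{psd} applied with $(\tilde{\dmat}^a,\tilde{\dmat}^b)$ in place of $(\dmat,\tilde{\dmat})$, i.e., Definition \ref{def.tighter} rearranged to $y'(\tilde{\dmat}^b-\tilde{\dmat}^a)y\geq 0$ for all $y$. Your remark on symmetry is a reasonable extra precaution but, as you note, immaterial here.
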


\begin{corollary}\label{tighter.bound.sn}
	First writing the four $n \times n$ partitions of a matrix $\tilde{\dmat}$ as $\tilde{\dmat}_{00}$, $\tilde{\dmat}_{01}$, $\tilde{\dmat}_{10}$, and $\tilde{\dmat}_{11}$, define an $n \times n$ matrix
	\begin{align*}
	\tilde{\dmat}_{\scriptscriptstyle  + }:=\tilde{\dmat}_{00}+\tilde{\dmat}_{11}-\tilde{\dmat}_{10}-\tilde{\dmat}_{01}.
	\end{align*}
	Then for two matrices, $\tilde{ \dmat }^a$ and $\tilde{ \dmat }^b$, that correspond to different variance bounds, $\tilde{\dmat}^a$ corresponds to a ``tighter variance bound under the sharp null" if and only if $\tilde{ \dmat }_{\scriptscriptstyle +}^b-\tilde{ \dmat }_{\scriptscriptstyle +}^a$ is positive semi-definite. 
\end{corollary}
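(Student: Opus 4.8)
The plan is to collapse the $2n$-dimensional quadratic form $y'\tilde{\dmat}y$ to an $n$-dimensional one under the sharp null, and then apply the definition of a tighter bound in exactly the same way as in Corollary~\ref{tighter.bound}. Concretely, I would recall the stacked representation $y=\left[\begin{smallmatrix}-y_0\\ y_1\end{smallmatrix}\right]$ and impose $y_1=y_0$; writing $w:=y_0=y_1\in\mathds{R}^{n}$, the stacked vector becomes $y=\left[\begin{smallmatrix}-w\\ w\end{smallmatrix}\right]$. Using the block partition $\tilde{\dmat}=\left[\begin{smallmatrix}\tilde{\dmat}_{00}&\tilde{\dmat}_{01}\\ \tilde{\dmat}_{10}&\tilde{\dmat}_{11}\end{smallmatrix}\right]$, a direct block multiplication gives
\[
y'\tilde{\dmat}y \;=\; w'\bigl(\tilde{\dmat}_{00}+\tilde{\dmat}_{11}-\tilde{\dmat}_{01}-\tilde{\dmat}_{10}\bigr)w \;=\; w'\tilde{\dmat}_{+}w,
\]
where the sign pattern is produced precisely by the $-w$ sitting in the first block of $y$ (the control potential outcomes being negated). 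This identity is the one substantive step of the argument.

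With the identity in hand, the rest is a one-line translation of definitions. By the definition of ``tighter variance bound under the sharp null,'' $\tilde{\dmat}^a$ is tighter iff $n^{-2}y'\tilde{\dmat}^a y\le n^{-2}y'\tilde{\dmat}^b y$ for every $y$ with $y_1=y_0$; cancelling $n^{-2}$ and substituting the identity above (applied to both $\tilde{\dmat}^a$ and $\tilde{\dmat}^b$) this is equivalent to $w'\tilde{\dmat}^a_{+}w\le w'\tilde{\dmat}^b_{+}w$ for all $w\in\mathds{R}^{n}$, hence to $w'(\tilde{\dmat}^b_{+}-\tilde{\dmat}^a_{+})w\ge 0$ for all $w$, which is exactly the statement that $\tilde{\dmat}^b_{+}-\tilde{\dmat}^a_{+}$ is positive semi-definite. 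Every step is an equivalence, so both implications follow simultaneously, just as in Lemma~\ref{psd} and Corollary~\ref{tighter.bound}.

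I do not expect a genuine obstacle here: the only real content beyond Corollary~\ref{tighter.bound} is the reduction of the $2n$-variable form to the $n$-variable form under $y_1=y_0$, and that is a short block computation whose only pitfall is bookkeeping the minus signs on the control block. As in Lemma~\ref{psd}, I would read ``positive semi-definite'' as the condition that the associated quadratic form is nonnegative, so no symmetry of $\tilde{\dmat}^b_{+}-\tilde{\dmat}^a_{+}$ needs to be assumed; if one prefers the symmetric-matrix convention, replacing each matrix by its symmetric part leaves all quadratic forms unchanged and the argument goes through verbatim.
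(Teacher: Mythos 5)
Your proof is correct and follows exactly the route the paper intends: the paper states this as an unproved corollary of Lemma \ref{psd} and the sharp-null definition, and the one substantive step — that $y=\bigl[\begin{smallmatrix}-w\\ w\end{smallmatrix}\bigr]$ collapses $y'\tilde{\dmat}y$ to $w'\tilde{\dmat}_{+}w$ with the sign pattern matching the definition of $\tilde{\dmat}_{+}$ — is computed correctly. Nothing is missing.
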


\begin{remark}\label{remark.tighter.bounds}
	Lemma \ref{psd} implies that a test for whether a candidate $\tilde{\dmat}$ corresponds to a bound is to check the eigenvalues of $\tilde{ \dmat }-\dmat$ for nonnegativeness. Similarly, Corollary \ref{tighter.bound} implies that a test for whether a candidate $\tilde{\dmat}^a$ matrix corresponds to a tighter variance bound than $\tilde{\dmat}^b$ is to check the eigenvalues of $\tilde{\dmat}^b-\tilde{\dmat}^a$ for non-negativeness. Failing this test, Corollary \ref{tighter.bound.sn} says that an adjudication between bounds might still be made by testing $\tilde{ \dmat }_{\scriptscriptstyle +}^b-\tilde{ \dmat }_{\scriptscriptstyle +}^a$ for non-negative eigenvalues. Alternatively, if some eigenvalues of $\tilde{\dmat}^b-\tilde{\dmat}^a$ are positive and some negative, heuristic methods of choosing the better bound could include comparing the maximum and minimum eigenvalues or determining the sign of the sum of eigenvalues. Biased inference due to ``cherry picking" the narrower confidence interval can be avoided because such comparisons can be done before observing the outcome variable.
\end{remark}

\begin{definition}[Identified variance bound]\label{def.identified.bound}
	For an arbitrarily defined $2n \times 2n$ matrix $\tilde{\dmat}$, let $n^{-2}y'\tilde{\dmat} y$ be an ``identified variance bound" for $n^{-2}y'\dmat y$ if it is a variance bound and if 
	\begin{align*}
	\bfI (\dmat=-1) 
	\circ \bfI(\tilde{\dmat}= 0)=\bfI (\dmat=-1)
	\end{align*}
	where $\circ$ is element-wise multiplication and, for example, $\bfI\left(\dmat=-1\right)$ is an indicator function returning an $2n \times 2n$ matrix of ones and zeros indicating whether each element of $\dmat$ is equal to $-1$ (an indication that the associated term in the variance quadratic is impossible to observe).	
\end{definition}
The definition says that for a variance bound $n^{-2}y'\tilde{\dmat}y$ to be an identified bound the elements of matrix $\dmat$ equal to $-1$ must correspond to elements of $\tilde{\dmat}$ that equal 0.

\subsection{Defining the Aronow-Samii bound}\label{section.ASbound}

Consider an identified bound proposed by \cite{aronowsamii17} that has the a unusual virtue of being perfectly general, i.e., applicable to arbitrary (identified) designs.

\begin{definition}[Aronow-Samii variance bound]
	The ``Aronow-Samii variance bound" is
	\begin{align}\label{varUB}
	\tilde{ \emph{\V} }^\tas \left(\widehat{\delta}^{\tht} \right) := n^{-2} y' \tilde{\dmat}^\tas y
	\end{align}
	where
	\begin{align*}
	\tilde{\dmat}^\tas:=\dmat +\bfI\left(\dmat=-1\right)+\emph{diag}\left(\bfI\left(\dmat=-1\right) 1_{\scriptscriptstyle 2n}\right)
	\end{align*}
	and $\emph{diag}(.)$ creates a diagonal matrix from a vector.
\end{definition}
\begin{theorem}
	The Aronow-Samii variance bound, $n^{-2}y' \tilde{\dmat}^\tas y$,  is an identified bound for $n^{-2}y' {\dmat} y$.
\end{theorem}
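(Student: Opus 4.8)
The plan is to verify separately the two requirements in Definition \ref{def.identified.bound}: that $n^{-2}y'\tilde{\dmat}^\tas y$ is a variance bound, and that it is \emph{identified}.

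For the bound property, I would appeal to Lemma \ref{psd}, which says it suffices to show $\tilde{\dmat}^\tas - \dmat$ is positive semi-definite. Writing $\mathbf{A} := \bfI(\dmat = -1)$, which is a symmetric $0$--$1$ matrix because $\dmat$ (a variance--covariance matrix) is symmetric, the definition gives $\tilde{\dmat}^\tas - \dmat = \mathbf{A} + \text{diag}(\mathbf{A} 1_{\scriptscriptstyle 2n})$. The key step is to recognize this as a signless Laplacian: for any $v \in \mathds{R}^{2n}$, expanding the quadratic form and using symmetry of $\mathbf{A}$ yields $v'\bigl(\mathbf{A} + \text{diag}(\mathbf{A} 1_{\scriptscriptstyle 2n})\bigr)v = \tfrac{1}{2}\sum_{i,j}\mathbf{A}_{ij}(v_i + v_j)^2 \geq 0$, since every $\mathbf{A}_{ij} \in \{0,1\}$. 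Positive semi-definiteness, hence the bound, follows.

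For identification, I must check $\bfI(\dmat = -1) \circ \bfI(\tilde{\dmat}^\tas = 0) = \bfI(\dmat = -1)$, equivalently that $\dmat_{ij} = -1$ implies $\tilde{\dmat}^\tas_{ij} = 0$. The first observation is that the diagonal of $\dmat$ never equals $-1$: its entries are $(1-\pi_{1i})/\pi_{1i}$ and $(1-\pi_{0i})/\pi_{0i}$, which are strictly positive in any identified design ($0 < \pi_{1i} < 1$). Hence every entry with $\dmat_{ij} = -1$ is off-diagonal, where the correction $\text{diag}(\mathbf{A}1_{\scriptscriptstyle 2n})$ contributes nothing, so $\tilde{\dmat}^\tas_{ij} = \dmat_{ij} + \mathbf{A}_{ij} = -1 + 1 = 0$, as needed.

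The only step with real content is the positive-semi-definiteness of $\mathbf{A} + \text{diag}(\mathbf{A}1_{\scriptscriptstyle 2n})$; once the signless-Laplacian identity $v'(\mathbf{A} + \text{diag}(\mathbf{A}1))v = \tfrac{1}{2}\sum_{i,j}\mathbf{A}_{ij}(v_i+v_j)^2$ is in hand, the rest is bookkeeping. The subtle point to flag is that the clean cancellation ``$-1+1=0$'' in the identification argument requires that all $-1$ entries of $\dmat$ be off-diagonal, which is precisely where the identified-design hypothesis is used; a $-1$ appearing on the diagonal would instead leave a strictly positive entry in $\tilde{\dmat}^\tas$ and break identification.
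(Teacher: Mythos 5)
Your proof is correct, and it follows the paper's overall structure (same decomposition $\tilde{\dmat}^\tas - \dmat = \bfI(\dmat=-1) + \text{diag}(\bfI(\dmat=-1)1_{\scriptscriptstyle 2n})$, then Lemma \ref{psd} for the bound and Definition \ref{def.identified.bound} for identification), but the key step is argued differently. The paper establishes positive semi-definiteness by observing that the difference matrix is weakly diagonally dominant with nonnegative diagonal and invoking the Gershgorin circle theorem; you instead exhibit the quadratic form explicitly as a sum of squares, $v'\bigl(\mathbf{A} + \text{diag}(\mathbf{A}1_{\scriptscriptstyle 2n})\bigr)v = \tfrac{1}{2}\sum_{i,j}\mathbf{A}_{ij}(v_i+v_j)^2$, which is the signless-Laplacian identity. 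Both are valid; your route is more elementary (no named theorem needed, only symmetry of $\mathbf{A}$, which holds because $\dmat$ is a variance--covariance matrix) and arguably more informative, since it makes visible exactly which configurations of $y$ make the bound tight --- it also recovers, entrywise, the Young's-inequality derivation that Aronow and Samii themselves use. Your treatment of the identification half is actually more careful than the paper's: the paper simply asserts that $\bfI(\dmat=-1)$ zeroes out the unobservable entries, whereas you correctly flag that this cancellation requires all $-1$ entries of $\dmat$ to be off-diagonal (a diagonal $-1$ would pick up the $\text{diag}(\mathbf{A}1_{\scriptscriptstyle 2n})$ correction and remain strictly positive), and you verify that the identified-design hypothesis $0<\pi_{1i}<1$ rules this out since the diagonal entries of $\dmat$ are $(1-\pi_{1i})/\pi_{1i}$ and $(1-\pi_{0i})/\pi_{0i}$. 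No gaps.
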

\begin{proof}
	By definition of $\tilde{\dmat}^\tas$, 
	\begin{align*}
	\tilde{\dmat}^\tas-\dmat= \bfI\left(\dmat=-1\right)+\text{diag}\left(\bfI\left(\dmat=-1\right) 1_{\scriptscriptstyle 2n}\right).
	\end{align*}
	Note that $\bfI\left(\dmat=-1\right)+\text{diag}\left(\bfI\left(\dmat=-1\right) 1_{\scriptscriptstyle 2n}\right)$ sets the diagonal elements of ($\tilde{\dmat}^\tas-\dmat$) equal to the sum of off-diagonal elements in row $i$ (which by construction are all non-negative). The Gershgorin circle theorem implies that a real matrix is positive semi-definite if, for all $i$, diagonal element $ii$ is greater or equal to the sum of the absolute values of the other elements in the $i^{th}$ row. So, by the Gershgorin circle theorem $\tilde{\dmat}^\tas-\dmat$ is positive semidefinite. Therefore, by Lemma (\ref{psd}), $n^{-2} y'\tilde{\dmat}^\tas y$ is a variance bound. Moreover, as long as the design is an identified design (i.e., $0<\pi_{1i}<1$ for all $i$), it is an identified bound because $\bfI\left(\dmat=-1\right)$ ensures that the elements of $\dmat$ equal to $-1$ correspond to 0's in $\tilde{\dmat}^{\tas}$.
\end{proof}

Aronow and Samii (2017) derive the bound using Young's inequality.\footnote{In sum, since, for example, $ -y_{0i}$ and $y_{1i} $ are impossible to jointly observe for all $i$ (since units can only be assigned to treatment or control), the unobservable quantity $2y_{1i}y_{0i}$ which appears in the quadratic in (\ref{HTvar}) is bounded by the addition of identified quantity $y_{1i}^2+y_{0i}^2$ in equation (\ref{varUB}). By Young's inequality $2y_{1i}y_{0i} \leq y_{1i}^2+y_{0i}^2$, and hence ${\V}\left(\widehat{\delta}^{\tht} \right) \leq \tilde{\V}^\tas\left(\widehat{\delta}^{\tht} \right)$. } The above-theorem and proof using the Gershgorin circle theorem tie their insight to the current framework.  

The AS variance bound is elegant in its universal applicability and serves as a benchmark against which proposed alternatives might be compared. In some cases it can behave quite reasonably. For example, in completely randomized experiments it is exact under the sharp null. That said, simulation examples suggest it can be dramatically over-conservative for some designs.

In the next subsection an algorithm is proposed that may obtain a tighter identified bound for arbitrary designs. Additionally, an analytically-defined bound for cluster-randomized experiments is proposed in Section \ref{section.cluster} which is exact under the sharp null and provably tighter than the AS bound. 

\subsection{A proposed algorithm for a tighter variance bound}\label{section.Mbound}

The following is an algorithm which, if it converges, obtains an identified variance bound. Like the AS bound it has the virtue of being widely applicable. The drawback is the potential computationally difficulty.

In some cases the proposed bound is strictly tighter than the AS bound in the sense of Definition \ref{tighter.bound}. However, even when not strictly tighter, it can be tighter under the sharp null. In practice, relative tightness can be evaluated on a case-by-case basis using Corollary \ref{tighter.bound} and Corollary \ref{tighter.bound.sn} and Remark \ref{remark.tighter.bounds}.

\begin{algorithm} 
\end{algorithm}
		\begin{enumerate}
			\item Set $\mathbf{t}=\bfI(\dmat=-1)$
			\item Obtain the eigen decomposition of matrix $\mathbf{t}$
			\item Update $\mathbf{t}=\mathbf{v} (\mathbf{e} \circ \bfI(\mathbf{e}>0)) \mathbf{v}'$ where $\mathbf{v}$ is the matrix of eigenvectors and $\mathbf{e}$ is a diagonal matrix of eigenvalues
			\item Update $\mathbf{t}=\bfI(\dmat=-1)+\bfI(\dmat\neq -1) \circ \mathbf{t}$
			\item Repeat steps 2-4 until convergence is achieved (i.e., until all eigenvalues are non-negative in step 2) 
			\item Set $\tilde{ \dmat }^\tm =\dmat+\mathbf{t}$
		\end{enumerate}	
As above, $\circ$ is elementwise multiplication and, for example, $\bfI(\mathbf{e}>0)$ is an indicator function returning a matrix of ones and zeros indicating which elements of $\mathbf{e}$ are greater than zero.
 		
Conceptually, the goal of the algorithm is to create a matrix $\mathbf{t}$ that can be added to $\dmat$ yielding a $\tilde{ \dmat }$ matrix that corresponds to an identified variance bound. By Lemma \ref{psd} and Definition \ref{def.identified.bound}, there are two requirements for $\mathbf{t}$. First it must be positive semi-definite, and, second, elements corresponding to $-1$'s in the matrix $\dmat$ must equal one. In step 1, $\mathbf{t}$ meets the second criterion, but not the first.  In step 3, the algorithm creates an approximation to the matrix $\bfI(\dmat=-1)$ by way of the eigen decomposition that ensures positive semi-definiteness, thus meeting the first criterion. However, due to the approximation, $\mathbf{t}$ no longer meets the second criterion. Therefore, in step 4 the algorithm forces $\mathbf{t}$ to have 1's wherever $\dmat$ has $-1$'s in order to again meet the second criteria. But doing so means that $\mathbf{t}$ will no longer meet the first criteria. So, the algorithm iterates through steps 2-4 until convergence is achieved (i.e., until all eigenvalues are non-negative in step 2) at which point $\mathbf{t}$ meets both criteria and, thus, $\tilde{ \dmat }^\tm$ corresponds to an identified bound.

\subsection{Estimating a variance bound}\label{section.EstimatingBound}

For a $\tilde{\dmat}$ associated with an identified variance bound for an HT estimator, $\tilde{\V}\left(\widehat{\delta}^{\tht} \right)$, an unbiased estimator of that bound can then be constructed as
\begin{align}\label{ub_var_est}
\widehat{\tilde{\V}}\left(\widehat{\delta}^{\tht} \right) := n^{-2} y' \R\left( \tilde{\dmat} / \tilde{\matp}\right) \R y
\end{align}
where $/$ is element-wise division, 
\begin{align}\label{ptilde}
\tilde{\matp} := \left[\begin{array}{ccc}
\matp_{00} & \matp_{01} 
\\ \matp_{10} &\matp_{11}\end{array}\right]
+ \left[\begin{array}{ccc}
\bfI\left( \matp_{00}=0 \right) & \bfI\left( \matp_{01}=0 \right)
\\ \bfI\left( \matp_{10}=0 \right) &\bfI\left( \matp_{11}=0 \right) \end{array}\right],
\end{align}
$\matp_{00}$ has $ij$ element $\pi_{0i0j}$, and $\matp_{01}$ has $ij$ element $\pi_{0i1j}$. $\matp_{10}$ and $\matp_{11}$ are defined analogously. Conceptually, $\tilde{\matp}$ weights each term in the quadratic in (\ref{ub_var_est}) inversely proportional to the probability of observing that term. The second term on the right hand side of equation (\ref{ptilde}) serves to replace zeros in $\matp_{00}$, $\matp_{01}$, $\matp_{10}$ and $\matp_{11}$ with ones to ensure that there is no division by zero in $\tilde{\dmat} / \tilde{\matp}$. The choice of replacing the zeros with a value of one is arbitrary and does not affect the estimate because zero elements in the first term on the right-hand-side of (\ref{ptilde}) correspond to zeros in $\tilde{\dmat}$ (by the definition of an identified variance bound). 
Unbiasedness follows from the recognition that $\E \left[ \R \left(\tilde{\dmat}/\tilde{\matp} \right) \R \right]=\tilde{\dmat}$.


Next, by analogy to equation (\ref{ub_var_est}) and an appeal to the asymptotics, one can motivate the variance bound estimator for generalized regression estimators as
\begin{align}\label{avarboundest}
\widehat{\tilde{\V}}\left(\widehat{\delta}^\tr \right) := & n^{-2} \widehat{u}' \R \left( \tilde{\dmat}/ \tilde{\matp} \right) \R  \widehat{u}
\end{align}
where $\widehat{u}=y-\xx \widehat{b} $.  

Unless $\widehat{b}$ is a fixed constant vector, its introduction in (\ref{avarboundest}) means that the bound estimator is not generally unbiased. Refinements made to White's HC0 variance estimator could be applied here, such as degrees of freedom adjustments.

\subsection{Borrowing a tighter variance bound for $\widehat{\delta}^{\tr,\trara }$}\label{section.BorrowingBound}

In many instances the gap between the variance of the $\widehat{\delta}^{\tr,\trara}$, defined in section \ref{section.2RA}, and the bound on its variance estimated by (\ref{avarboundest}) is exceedingly large, leading to overly conservative inference. Theorem \ref{UBest} introduces an approach to minimizing the variance {\it bound} of the fixed-coefficient generalized regression estimator, $\widehat{\delta}^{\tr,f}$, with respect to $b^f$. Then Theorem \ref{UBborrow} gives a justification for ``borrowing" its variance bound estimator and pairing it with $\widehat{\delta}^{\tr,\trara}$ for the purpose of inference.  

\begin{theorem}\label{UBest}
	Let $\tilde{\dmat}$ correspond to a variance bound and let $u=y-\xx b^f$, where $b^f$ is a fixed constant vector.  Then for the fixed-coefficient generalized regression estimator, a value of $b^f$ that minimizes variance bound $n^{-2}u'\tilde{ \dmat } u$ is 
	\begin{align}\label{bopt.tilde}
	\tilde{b}^{opt}:=(\xx' \tilde{\dmat} \xx )^{(-)} \xx' \tilde{\dmat} y.
	\end{align}
\end{theorem}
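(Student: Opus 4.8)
The plan is to mirror the argument used for Theorem \ref{thrm.bopt}, replacing $\dmat$ with $\tilde{\dmat}$, but first establishing that $\tilde{\dmat}$ is positive semi-definite so that the quadratic objective is convex and the first-order condition is also sufficient. Since $\dmat = \V\!\left(1_{\scriptscriptstyle 2n}'\bpi^{-1}\R\right)$ is a variance-covariance matrix it is positive semi-definite, and since $\tilde{\dmat}$ corresponds to a variance bound, Lemma \ref{psd} gives that $\tilde{\dmat}-\dmat$ is positive semi-definite; hence $\tilde{\dmat} = \dmat + (\tilde{\dmat}-\dmat)$ is positive semi-definite.

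Next I would expand the bound. Writing $u = y-\xx b^f$,
\begin{align*}
n^{2}\cdot n^{-2}u'\tilde{\dmat}u = (y-\xx b^f)'\tilde{\dmat}(y-\xx b^f) = y'\tilde{\dmat}y - 2\,y'\tilde{\dmat}\xx b^f + b^{f\prime}\xx'\tilde{\dmat}\xx b^f.
\end{align*}
Differentiating with respect to $b^f$ and setting the gradient equal to zero yields the normal equations $(\xx'\tilde{\dmat}\xx)b^f = \xx'\tilde{\dmat}y$. Premultiplying both sides by $(\xx'\tilde{\dmat}\xx)(\xx'\tilde{\dmat}\xx)^{(-)}$ and invoking the defining property of the generalized inverse, exactly as in the proof of Theorem \ref{thrm.bopt}, shows that $\tilde{b}^{opt} = (\xx'\tilde{\dmat}\xx)^{(-)}\xx'\tilde{\dmat}y$ solves the normal equations. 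Because the Hessian $2\,\xx'\tilde{\dmat}\xx$ is positive semi-definite, the objective is convex in $b^f$, so any solution of the normal equations is a global minimizer of the bound.

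The one step requiring care is checking that the normal equations are actually consistent, i.e., that $\xx'\tilde{\dmat}y$ lies in the column space of $\xx'\tilde{\dmat}\xx$; otherwise premultiplying by $(\xx'\tilde{\dmat}\xx)(\xx'\tilde{\dmat}\xx)^{(-)}$ would not recover $\xx'\tilde{\dmat}y$ and the derivation would stall. This follows from the positive semi-definiteness of $\tilde{\dmat}$: factor $\tilde{\dmat}=\mathbf{q}'\mathbf{q}$ for some matrix $\mathbf{q}$, so that $\xx'\tilde{\dmat}\xx = (\mathbf{q}\xx)'(\mathbf{q}\xx)$ has the same column space as $(\mathbf{q}\xx)'$, while $\xx'\tilde{\dmat}y = (\mathbf{q}\xx)'(\mathbf{q}y)$ plainly lies in that column space. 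I expect this consistency check (and the preliminary observation that $\tilde{\dmat}$ is positive semi-definite) to be the only real content beyond what is already in Theorem \ref{thrm.bopt}; the remaining algebra is verbatim the same.
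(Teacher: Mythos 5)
Your proposal is correct and follows essentially the same route as the paper, whose own proof simply says to repeat the argument of Theorem \ref{thrm.bopt} with $\tilde{\dmat}$ in place of $\dmat$. Your two added checks --- that $\tilde{\dmat}$ is positive semi-definite (so the first-order condition is sufficient) and that the normal equations are consistent --- are gaps the paper leaves implicit even in Theorem \ref{thrm.bopt}, and you close them correctly.
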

\begin{proof}
	The result follows the same logic as the optimal finite sample ${b^f}$ in Theorem \ref{thrm.bopt}.  Rather than minimizing ${u}' {\dmat} {u}$, however, simply minimize ${u}' \tilde{\dmat} {u}$ with respect to $b^f$ where ${u}=y-\xx b^f$.
\end{proof}

\begin{theorem}\label{UBborrow}
	Let $\tilde{\dmat}$ correspond to a variance bound and define $\tilde{u}=y-\xx\tilde{b}^{opt}$ and ${u}=y-\xx{b}^{opt}$, then for all $y\in \mathds{R}^{2n}$, $n^{-2} u' \dmat u \leq n^{-2} \tilde{u}'\tilde{\dmat}\tilde{u}\leq n^{-2} {u}'\tilde{\dmat}{u}$. Hence, $n^{-2} \tilde{u}'\tilde{\dmat}\tilde{u}$ is a tighter bound for the variance of the fixed-coefficient generalized regression estimator with optimal coefficient $b^{opt}$ than $n^{-2} {u}'\tilde{\dmat}{u}$. 
\end{theorem}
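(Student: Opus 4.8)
The plan is to deduce the two displayed inequalities from a single chain of three elementary comparisons and then read off the ``hence'' claim. First I would record the positive‑semidefiniteness bookkeeping that underlies everything: since $\dmat=\V\!\left(1'_{\scriptscriptstyle 2n}\bpi^{-1}\R\right)$ is a variance–covariance matrix it is positive semi‑definite; because $\tilde{\dmat}$ corresponds to a variance bound, Lemma \ref{psd} gives that $\tilde{\dmat}-\dmat$ is positive semi‑definite; summing, $\tilde{\dmat}$ is positive semi‑definite too, and hence so are $\xx'\dmat\xx$ and $\xx'\tilde{\dmat}\xx$. This is exactly what makes the first‑order conditions solved in Theorem \ref{thrm.bopt} and Theorem \ref{UBest} identify \emph{global} minimizers rather than mere stationary points: $b^{opt}=(\xx'\dmat\xx)^{(-)}\xx'\dmat y$ minimizes $b\mapsto(y-\xx b)'\dmat(y-\xx b)$ over $b\in\mathds{R}^{l}$, and $\tilde{b}^{opt}=(\xx'\tilde{\dmat}\xx)^{(-)}\xx'\tilde{\dmat}y$ minimizes $b\mapsto(y-\xx b)'\tilde{\dmat}(y-\xx b)$. (If one prefers not to invoke ``global minimizer,'' the same fact follows by expanding $(y-\xx b)'\dmat(y-\xx b)=u'\dmat u+(b^{opt}-b)'\xx'\dmat\xx(b^{opt}-b)$ using the normal equation $\xx'\dmat\xx b^{opt}=\xx'\dmat y$ from (\ref{foc}), and identically with $\tilde{\dmat}$.)

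Next I would write, for an arbitrary $y\in\mathds{R}^{2n}$,
\begin{align*}
n^{-2}u'\dmat u\;\le\;n^{-2}\tilde{u}'\dmat\tilde{u}\;\le\;n^{-2}\tilde{u}'\tilde{\dmat}\tilde{u}\;\le\;n^{-2}u'\tilde{\dmat}u .
\end{align*}
The left inequality holds because $\tilde{u}=y-\xx\tilde{b}^{opt}$ has the form $y-\xx b$ and $u=y-\xx b^{opt}$ minimizes the $\dmat$‑quadratic over all such vectors; the middle inequality is just $\tilde{u}'(\tilde{\dmat}-\dmat)\tilde{u}\ge 0$, i.e.\ positive semi‑definiteness of $\tilde{\dmat}-\dmat$; the right inequality holds because $u=y-\xx b^{opt}$ has the form $y-\xx b$ and $\tilde{u}=y-\xx\tilde{b}^{opt}$ minimizes the $\tilde{\dmat}$‑quadratic over all such vectors. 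Discarding the second term of the chain yields exactly $n^{-2}u'\dmat u\le n^{-2}\tilde{u}'\tilde{\dmat}\tilde{u}\le n^{-2}u'\tilde{\dmat}u$, the displayed claim, for every $y$.

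Finally I would translate into the language of the statement. By Lemma \ref{lemma.finiteVar} with the fixed coefficient $b^{f}=b^{opt}$, the leftmost quantity $n^{-2}u'\dmat u$ is the exact variance $\V\!\left(\widehat{\delta}^{\tr,f}\right)$ of that fixed‑coefficient generalized regression estimator; and for any vector $w$ one has $n^{-2}w'\tilde{\dmat}w-n^{-2}w'\dmat w=n^{-2}w'(\tilde{\dmat}-\dmat)w\ge 0$, so both $n^{-2}u'\tilde{\dmat}u$ and $n^{-2}\tilde{u}'\tilde{\dmat}\tilde{u}$ are valid bounds for that variance; the chain shows the latter never exceeds the former for any $y$, so it is the tighter bound, as asserted. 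The proof has essentially no obstacle — three one‑line comparisons — so the only real content, and the step deserving care, is the first paragraph: one must verify that $b^{opt}$ and $\tilde{b}^{opt}$ are genuine minimizers of their quadratics and not merely critical points, which is precisely why the positive semi‑definiteness of $\dmat$, and therefore of $\tilde{\dmat}$, $\xx'\dmat\xx$, and $\xx'\tilde{\dmat}\xx$, must be made explicit rather than assumed.
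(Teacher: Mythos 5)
Your proof is correct and follows essentially the same route as the paper: the identical chain of three comparisons (optimality of $b^{opt}$ for the $\dmat$-quadratic, positive semi-definiteness of $\tilde{\dmat}-\dmat$, optimality of $\tilde{b}^{opt}$ for the $\tilde{\dmat}$-quadratic), citing Theorem \ref{thrm.bopt}, Lemma \ref{psd}, and Theorem \ref{UBest} in the same roles. Your additional verification that the first-order conditions identify global minimizers (via positive semi-definiteness of $\xx'\dmat\xx$ and $\xx'\tilde{\dmat}\xx$) is a worthwhile detail the paper leaves implicit, but it does not change the argument.
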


\begin{proof}
	By Theorem \ref{thrm.bopt}, because $b^{opt}$ minimizes the variance of the fixed-coefficient generalized regression estimator, $n^{-2} u' \dmat u \leq n^{-2} \tilde{u}'{\dmat}\tilde{u}$.  Moreover, because $\tilde{\dmat}$ corresponds to a variance bound, $n^{-2} \tilde{u}'{\dmat}\tilde{u} \leq n^{-2} \tilde{u}'\tilde{\dmat}\tilde{u}$. Finally, by Theorem \ref{UBest}, because $\tilde{b}^{opt}$ minimizes the variance bound of the fixed-coefficient generalized regression estimator, $n^{-2} \tilde{u}'\tilde{\dmat}\tilde{u} \leq n^{-2} {u}'\tilde{\dmat}{u}$. The result follows. 
\end{proof}

The result motivates the variance bound estimator for $\widehat{\delta}^{\tr,\trara }$, 
\begin{align}\label{boptbound}
\widehat{\tilde{\V}}\left(\widehat{\delta}^{\tr,\trara } \right) := & n^{-2} \widehat{\tilde{u}}' \R \left( \tilde{\dmat}/ \tilde{\matp} \right) \R  \widehat{\tilde{u}},
\end{align}
where $\widehat{\tilde{u}}=y-\xx \widehat{\tilde{b}}{}^{opt}$ and $\widehat{\tilde{b}}{}^{opt}$ is an estimator of (\ref{bopt.tilde}) that, for example, could be defined using a regression adjustment procedure analogous to (\ref{TSopt.equation}). Again, additional adjustments for degrees of freedom or leverage may be advisable.


\section{Optimal Regression for Complete Randomization}\label{section.completelyrand}

This section draws the connection to two optimal estimators for completely randomized designs, Lin's OLS for specification II and the tyranny of the minority estimator. That these estimators are asymptotically optimal is known, but the proofs are novel and the demonstration connects the current framework to those results. In this section it is also shown that Lin's fully-interacted specification leads to tighter bounds on the variance in (\ref{varUB}). Finally, it is proven that, for specification II, the {\rara} estimator, $\widehat{\delta}_\II^{\tr,\trara}$, is algebraically equivalent to the OLS estimator, $\widehat{\delta}_\II^{\tr,ols}$.  In that sense, {\rara} can be thought of as a generalization of OLS with specification II for arbitrary designs.

\subsection{OLS is optimal for completely randomized designs with specification II}\label{section.ols.opt}

In this subsection, it will be shown that the population OLS coefficient, call it $b^{ols}_\II$, is an optimal coefficient for the fixed-coefficient generalized regression estimator with completely randomized designs and specification II. It will follow that, under Assumptions 1 and 2 in Section \ref{section.asymptotic.arguments}, the OLS coefficient {\it estimator}, call it $\widehat{b}^{ols}_\II$, has a conjugate ATE estimator, $\widehat{\delta}^{\tr,ols}_\II$, that obtains minimum asymptotic variance.

\begin{definition}[OLS coefficient]\label{def.ols}
	The ``OLS coefficient for specification \emph{II}" is 
	\emph{\begin{align*}
		b^{ols}_\II= &\left({\xx_\II}' \xx_\II  \right)^{-1} {\xx_\II }' y 
		\\ = &\left[\begin{matrix} 
		\mu_{y_0} \\ \var(\x)^{-1}\cov(\x, y_0)
		\\ \mu_{y_1} \\ \var(\x)^{-1}\cov(\x, y_1)
		\end{matrix}\right],
		\end{align*}}where $\var(\tx):=n^{-1}\sum_i(\tx_i-\mu_{\tx})(\tx_i-\mu_{\tx})'$ and $\cov(\tx, y_1):=n^{-1}\sum_i(\tx_i-\mu_{\tx})(y_{1i}-\mu_{y_1})'$ are finite population variance and covariance, respectively.\footnote{Note that $\var(\tx)$ and $\cov(\tx, y_1)$ summarize features of the finite population. They should not be taken to imply randomness in $\x$ and $y_1$. By contrast, $\V(.)$ is used throughout to characterize the design variance of an estimator (or variance-covariance of a vector of estimators, depending on context).}
\end{definition}
\begin{definition}[OLS coefficient estimator]\label{def.ols.est} The ``OLS coefficient estimator for specification \emph{II}" is
	\emph{\begin{align*}
		\widehat{b}^{ols}_\II=& \left({\xx_\II}' \R \xx_\II \right)^{-1} {\xx_\II}' \R y.
		\end{align*}}
\end{definition}

To show that in completely randomized experiments with specification II the coefficient in Definition \ref{def.ols} is optimal for the fixed-coefficient generalized regression estimator, the entire set of optimal coefficients for an arbitrary design is first defined. Subsequently, it can be shown that, for complete randomization, $b^{ols}_\II$ is in that set. The infinite set of optimal coefficients for an arbitrary design is given in the following Lemma.

\begin{lemma}\label{lemma.allbopt}
	First, for an arbitrary design, for any given generalized inverse, denoted $(.)^{(g)}$, and a given $z \in \mathds{R}^{\colsxx}$ where $l$ is the number of columns of $\xx$, let
	\begin{align}\label{boptz}
	b^{opt,gz}:=\left(\xx ' \dmat \xx \right)^{(g)}\xx'\dmat y +\left(\I_\colsxx-(\xx ' \dmat \xx)^{(g)}(\xx ' \dmat \xx) \right)z
	\end{align}
	where $\I_\colsxx$ is an $\colsxx \times \colsxx$ identity matrix.  Then the entire set of solutions to $(\ref{foc})$ can be defined as 
	\begin{align}\label{allGinv}
	\{ b^{opt,gz}\left. \right| z \in \mathds{R}^{\colsxx} \}.
	\end{align}
\end{lemma}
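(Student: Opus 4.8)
The claim is that the set in \eqref{allGinv} is exactly the solution set of the first-order condition \eqref{foc}, namely $(\xx'\dmat\xx)\,b^f = \xx'\dmat y$. This is a standard fact about consistent linear systems expressed through a generalized inverse, so the proof has two directions: show each $b^{opt,gz}$ solves \eqref{foc}, and show every solution of \eqref{foc} has the form $b^{opt,gz}$ for some $z$. The plan is to carry out both inclusions by direct algebraic verification using only the defining property of a generalized inverse, $\mathbf{a}\mathbf{a}^{(g)}\mathbf{a}=\mathbf{a}$, together with the fact that the system is consistent (which we get for free because $b^{opt}$ from Theorem \ref{thrm.bopt} is a solution, or equivalently because $\xx'\dmat y$ lies in the column space of $\xx'\dmat\xx$).

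First I would establish consistency of \eqref{foc}. Since $\dmat$ is a variance-covariance matrix it is positive semi-definite, so we can write $\dmat = \mathbf{g}'\mathbf{g}$ for some matrix $\mathbf{g}$; then $\xx'\dmat y = \xx'\mathbf{g}'\mathbf{g} y = (\mathbf{g}\xx)'(\mathbf{g} y)$ lies in the row space of $\mathbf{g}\xx$, which equals the column space of $(\mathbf{g}\xx)'(\mathbf{g}\xx) = \xx'\dmat\xx$. Hence there exists at least one solution, and moreover $(\xx'\dmat\xx)(\xx'\dmat\xx)^{(g)}\xx'\dmat y = \xx'\dmat y$ for any generalized inverse. (Theorem \ref{thrm.bopt}'s proof already records this step for the Moore-Penrose inverse; the same argument works verbatim for any $(.)^{(g)}$.)

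For the forward inclusion, I would plug $b^{opt,gz}$ from \eqref{boptz} into the left side of \eqref{foc}: $(\xx'\dmat\xx)b^{opt,gz} = (\xx'\dmat\xx)(\xx'\dmat\xx)^{(g)}\xx'\dmat y + (\xx'\dmat\xx)\big(\I_\colsxx - (\xx'\dmat\xx)^{(g)}(\xx'\dmat\xx)\big)z$. The first term equals $\xx'\dmat y$ by the consistency identity above, and the second term equals $(\xx'\dmat\xx)z - (\xx'\dmat\xx)(\xx'\dmat\xx)^{(g)}(\xx'\dmat\xx)z = 0$ by the generalized-inverse property. So $b^{opt,gz}$ solves \eqref{foc} for every $z$. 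For the reverse inclusion, suppose $b^f$ is any solution, so $(\xx'\dmat\xx)b^f = \xx'\dmat y$. Then set $z := b^f$ and compute $b^{opt,gz} = (\xx'\dmat\xx)^{(g)}\xx'\dmat y + \big(\I_\colsxx - (\xx'\dmat\xx)^{(g)}(\xx'\dmat\xx)\big)b^f = (\xx'\dmat\xx)^{(g)}(\xx'\dmat\xx)b^f + b^f - (\xx'\dmat\xx)^{(g)}(\xx'\dmat\xx)b^f = b^f$, using $\xx'\dmat y = (\xx'\dmat\xx)b^f$ in the first term. Hence $b^f$ is of the required form, and the two inclusions together give \eqref{allGinv}.

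The main obstacle is essentially bookkeeping rather than mathematical depth: one must be careful that consistency of \eqref{foc} is genuinely needed (it is what makes the first term reproduce $\xx'\dmat y$) and that it holds in this design-based setting — which it does precisely because $\dmat$ is PSD, a point worth stating explicitly since it is the only place the structure of $\dmat$ (as opposed to an arbitrary matrix) enters. A secondary point to flag is that the Lemma as stated ranges over \emph{all} generalized inverses $(.)^{(g)}$, not just Moore-Penrose, so the argument should avoid invoking the reflexive property $\mathbf{a}^{(g)}\mathbf{a}\mathbf{a}^{(g)}=\mathbf{a}^{(g)}$; fortunately the computation above uses only $\mathbf{a}\mathbf{a}^{(g)}\mathbf{a}=\mathbf{a}$, so no extra hypothesis on $(.)^{(g)}$ is required.
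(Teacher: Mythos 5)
Your proposal is correct and follows essentially the same route as the paper's own proof: verify the forward inclusion by premultiplying $b^{opt,gz}$ by $(\xx'\dmat\xx)$ and using $\mathbf{a}\mathbf{a}^{(g)}\mathbf{a}=\mathbf{a}$ together with the consistency identity, then obtain the reverse inclusion by setting $z$ equal to an arbitrary solution and showing the formula reproduces it. Your explicit justification of consistency via the positive semi-definiteness of $\dmat$ (and your observation that only the defining property of a generalized inverse, not reflexivity, is needed) is a slightly more careful rendering of a step the paper delegates to Theorem \ref{thrm.bopt}, but the argument is the same.
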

\begin{proofatend}
 
	The proof consists of two parts: first proving that all members of the set in $(\ref{allGinv})$ are solutions and, second, showing that all solutions are in the set.  
	
	First note that the fact that (\ref{bopt}) is a solution to (\ref{foc}) implies that $(\xx ' \dmat \xx )(\xx ' \dmat \xx )^{(-)} \xx'\dmat y=\xx'\dmat y$.  Next, premultiplying (\ref{boptz}) by $(\xx' \dmat \xx)$ yields
	\begin{align*}
	(\xx' \dmat \xx)b^{opt,z}=&(\xx '\dmat \xx)\left(\xx ' \dmat \xx \right)^{(-)}\xx'\dmat y +(\xx \dmat \xx)\left(\I_l-(\xx ' \dmat \xx)^{(-)}(\xx ' \dmat \xx) \right)z
	\\ \implies (\xx' \dmat \xx)b^{opt,z}=&\xx'\dmat y +\left((\xx' \dmat \xx)-(\xx' \dmat \xx)(\xx ' \dmat \xx)^{(-)}(\xx ' \dmat \xx) \right)z
	\\ \implies (\xx' \dmat \xx)b^{opt,z}=&\xx'\dmat y
	\end{align*}
	Hence, $b^{opt,z}$ is a solution to $(\ref{foc})$.  This proves that all members of the set given by (\ref{allGinv}) are solutions.  
	
	Next, to prove that all solutions are in the set given by (\ref{allGinv}), let $b^{opt,*}$ represent a an arbitrary solution to $(\ref{foc})$ and then set $z=b^{opt,*}$. Then  
	\begin{align*}
	b^{opt,z}=&\left(\xx ' \dmat \xx \right)^{(-)}\xx'\dmat y +\left(\I_l-(\xx ' \dmat \xx)^{(-)}(\xx ' \dmat \xx) \right)b^{opt,*}
	\\ =&\left(\xx ' \dmat \xx \right)^{(-)}\xx'\dmat y +\left(b^{opt,*}-(\xx ' \dmat \xx)^{(-)}(\xx ' \dmat \xx)b^{opt,*} \right).
	\\ =&\left(\xx ' \dmat \xx \right)^{(-)}\xx'\dmat y +\left(b^{opt,*}-(\xx ' \dmat \xx)^{(-)}\xx'\dmat y\right)
	\\ =& b^{opt,*}	.
	\end{align*}
	Hence, all solutions are represented in the set given by (\ref{allGinv}).
 
\end{proofatend}
\begin{proof}
	Provided in Appendix.  
\end{proof}

In equation (\ref{boptz}), the generalized inverse, $g$, is considered fixed and the set is defined with regard to all possible $z$. That said, $g$ could be any generalized inverse.  The point is that the entire set can be defined with reference to only a single generalized inverse. In keeping with the prior use of the Moore-Penrose generalized inverse above, it might have been sensible to also use it in (\ref{boptz}) instead of the more generic $g$. However, in order to prove that OLS is optimal, a subsequent proof will use the fact that $g$ can be some other inverse.

Next, before it can be proven that $b^{ols}_\II$ is in the set given by Lemma \ref{lemma.allbopt}, it must be shown that a ``separable" solutions can be optimal. By separable, it is meant that the sub-vector of coefficients associated with treatment units does not involve the terms $\dmat_{00}$, $\dmat_{01}$, $\dmat_{10}$ or $y_0$ (the vector of control potential outcomes), and, likewise, the sub-vector of coefficients associated with control potential outcomes does not involve the terms $\dmat_{11}$, $\dmat_{01}$, $\dmat_{10}$, or $y_1$ (the vector of treatment potential outcomes).\footnote{Being separable implies that one way to minimize the overall variance of $\widehat{\delta}$ is to separately minimize (with respect to $b$) the variance of the estimated mean of each experimental arm while ignoring the other arm.} 

Separability is provable with a less restrictive assumption than complete random assignment, namely, under equal-$\pi_1$ designs, (i.e., designs where $\pi_{1i}=\pi_{1j}$ for all $i,j$). Equal-$\pi_1$ designs include complete randomization, Bernoulli designs, cluster-randomized designs (i.e., complete random assignment of clusters) and block randomized designs where an equal fraction is assigned to treatment in every block.

\begin{lemma}\label{LA1} For designs where $\pi_{1i}=\pi_{1j}$ for all $i,j$ (e.g., completely randomized designs), defining $\dmat_{**}$ to be the matrix with $ij$ element $\pi_{1i1j}-\pi_{1i}\pi_{1j}$, the following equalities hold:
	\begin{align*}
	\dmat_{**} =  \pi_{0i}^2   \dmat_{00}   = \pi_{1i}^2   \dmat_{11}   = -\pi_{1i} \pi_{0i} \dmat_{10}   = -\pi_{0i} \pi_{1i}   \dmat_{01}  .
	\end{align*}
\end{lemma}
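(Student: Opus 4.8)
The plan is to prove the chain of equalities entrywise and then appeal to the arbitrariness of the indices. I would fix $i$ and $j$, write $s_{ij} := \pi_{1i1j} - \pi_{1i}\pi_{1j}$ for the $ij$ entry of $\dmat_{**}$, and note at the outset that $s_{ij} = \cov(R_{1i}, R_{1j})$ and, on the diagonal, $s_{ii} = \pi_{1i}(1-\pi_{1i}) = \pi_{1i}\pi_{0i}$, so that the diagonal requires no separate treatment. The work splits into two ingredients: a sign-bookkeeping identity among the four families of centered joint assignment probabilities, which holds for any design, and a cancellation of scalar prefactors, which is the only place the equal-$\pi_1$ hypothesis is used.

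First I would establish the bookkeeping step. Using $R_{0i} = 1 - R_{1i}$ and bilinearity of covariance, one gets $\cov(R_{0i}, R_{0j}) = \cov(R_{1i}, R_{1j})$, $\cov(R_{1i}, R_{0j}) = -\cov(R_{1i}, R_{1j})$, and $\cov(R_{0i}, R_{1j}) = -\cov(R_{1i}, R_{1j})$, i.e. $\pi_{0i0j} - \pi_{0i}\pi_{0j} = s_{ij}$, $\pi_{1i0j} - \pi_{1i}\pi_{0j} = -s_{ij}$, and $\pi_{0i1j} - \pi_{0i}\pi_{1j} = -s_{ij}$. Substituting these into the definitions of the four blocks of $\dmat$ yields $(\dmat_{11})_{ij} = s_{ij}/(\pi_{1i}\pi_{1j})$, $(\dmat_{00})_{ij} = s_{ij}/(\pi_{0i}\pi_{0j})$, $(\dmat_{10})_{ij} = -s_{ij}/(\pi_{1i}\pi_{0j})$, and $(\dmat_{01})_{ij} = -s_{ij}/(\pi_{0i}\pi_{1j})$.

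Second I would invoke the hypothesis. Since $\pi_{1i} = \pi_{1j}$ for all $i,j$, also $\pi_{0i} = 1 - \pi_{1i} = 1 - \pi_{1j} = \pi_{0j}$, so $\pi_{1i}^2 = \pi_{1i}\pi_{1j}$, $\pi_{0i}^2 = \pi_{0i}\pi_{0j}$, $\pi_{1i}\pi_{0i} = \pi_{1i}\pi_{0j}$, and $\pi_{0i}\pi_{1i} = \pi_{0i}\pi_{1j}$. Multiplying each block entry from the previous step by the corresponding prefactor therefore clears the denominator and leaves exactly $s_{ij}$ in each case, so $\pi_{1i}^2(\dmat_{11})_{ij} = \pi_{0i}^2(\dmat_{00})_{ij} = -\pi_{1i}\pi_{0i}(\dmat_{10})_{ij} = -\pi_{0i}\pi_{1i}(\dmat_{01})_{ij} = s_{ij} = (\dmat_{**})_{ij}$; the matrix equalities in the statement follow because $i,j$ were arbitrary.

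There is no genuine analytic obstacle here — the argument is mechanical. The only thing demanding care is the $i$-versus-$j$ bookkeeping in the scalar prefactors: the bookkeeping step naturally produces mixed denominators such as $\pi_{1i}\pi_{1j}$, and it is precisely the equal-$\pi_1$ hypothesis that allows these to be rewritten with the single index $i$, matching the form $\pi_{1i}^2$, $\pi_{0i}^2$, $-\pi_{1i}\pi_{0i}$ that appears in the lemma.
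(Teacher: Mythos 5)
Your proof is correct and follows essentially the same route as the paper, whose proof simply asserts that the result follows from the equal-$\pi_1$ hypothesis and the definitions of the four partitions of $\dmat$; you have carried out that entrywise computation explicitly, with the covariance identities $\cov(R_{0i},R_{0j})=\cov(R_{1i},R_{1j})$ and $\cov(R_{1i},R_{0j})=-\cov(R_{1i},R_{1j})$ serving as a clean way to organize the sign bookkeeping. Nothing is missing.
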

\begin{proof}
	The result follows from the $\pi_{1i}=\pi_{1j}$ for all $i,j$ and the defintion of the four partitions of $\dmat$ given in (\ref{dmat.quad}).
\end{proof}
\begin{lemma}\label{lemma.sep} 
	Let $\tilde{ \x}=[1_n \hspace{1mm} \x]$ be the matrix of coefficients with the addition of a leading constant. For designs where $\pi_{1i}=\pi_{1j}$ for all $i,j$ (e.g., completely randomized designs), the fixed-coefficient generalized regression estimator with the ``separated" coefficient
	\emph{\begin{align}\label{equation.sep}
		b^{sep}_\II= 
		\left[\begin{matrix}
		(\tx ' \dmat_{00} \tx )^{(-)} \tx'\dmat_{00} y_0 \\ (\tx ' \dmat_{11} \tx )^{(-)} \tx'\dmat_{11} y_1 
		\end{matrix} \right]
		\end{align}}has finite-sample minimum variance, i.e., $b^{sep}_\II$ is in the set of optimal fixed-coefficients given by Lemma \ref{lemma.allbopt}.
\end{lemma}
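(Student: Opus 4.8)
The plan is to verify that the candidate $b^{sep}_\II$ of (\ref{equation.sep}) solves the first-order condition (\ref{foc}), i.e.\ that $(\xx_\II'\dmat\xx_\II)\,b^{sep}_\II=\xx_\II'\dmat y$. By Lemma \ref{lemma.allbopt} the solution set of (\ref{foc}) is exactly the set of finite-sample-optimal fixed coefficients of that lemma (and the computation in the proof of Theorem \ref{thrm.bopt} shows any solution of (\ref{foc}) minimizes $u'\dmat u$), so confirming the first-order condition is all that is needed.

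First I would use the block structure of specification II. Writing $\tx=[1_{\scriptscriptstyle n}\,\x]$, we have $\xx_\II=\left[\begin{smallmatrix}-\tx & \mathbf{0}\\ \mathbf{0} & \tx\end{smallmatrix}\right]$ and $y=\left[\begin{smallmatrix}-y_0\\ y_1\end{smallmatrix}\right]$, so by the partition (\ref{dmat.quad}) the quantities $\xx_\II'\dmat\xx_\II$ and $\xx_\II'\dmat y$ acquire $2\times2$ block forms whose blocks are combinations of $\tx'\dmat_{ab}\tx$ and $\tx'\dmat_{ab}y_c$, with signs inherited from the $-\tx$, $-y_0$ entries. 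Now I invoke Lemma \ref{LA1}: since $\pi_{1i}=\pi_{1j}$ for all $i,j$, writing $p:=\pi_{1i}$, $q:=1-p$ and $\dmat_{**}$ as in that lemma, all four blocks of $\dmat$ are scalar multiples of $\dmat_{**}$, namely $\dmat_{00}=\dmat_{**}/q^2$, $\dmat_{11}=\dmat_{**}/p^2$, $\dmat_{01}=\dmat_{10}=-\dmat_{**}/(pq)$. Setting $S:=\tx'\dmat_{**}\tx$ and $r_j:=\tx'\dmat_{**}y_j$, these substitutions collapse the problem to
\begin{align*}
\xx_\II'\dmat\xx_\II=\left[\begin{matrix}S/q^2 & S/(pq)\\ S/(pq) & S/p^2\end{matrix}\right],\qquad \xx_\II'\dmat y=\left[\begin{matrix}r_0/q^2+r_1/(pq)\\ r_0/(pq)+r_1/p^2\end{matrix}\right],
\end{align*}
and, using $(cA)^{(-)}=c^{-1}A^{(-)}$ for a nonzero scalar $c$, likewise simplify $b^{sep}_\II$ to $\left[\begin{smallmatrix}S^{(-)}r_0\\ S^{(-)}r_1\end{smallmatrix}\right]$.

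Multiplying out, $(\xx_\II'\dmat\xx_\II)\,b^{sep}_\II$ has blocks $q^{-2}SS^{(-)}r_0+(pq)^{-1}SS^{(-)}r_1$ and $(pq)^{-1}SS^{(-)}r_0+p^{-2}SS^{(-)}r_1$, so the first-order condition reduces to the two identities $SS^{(-)}r_0=r_0$ and $SS^{(-)}r_1=r_1$. This is the one substantive point, and I would get it from positive semi-definiteness: the $ij$ entry of $\dmat_{**}$ is $\pi_{1i1j}-\pi_{1i}\pi_{1j}=\cov(R_{1i},R_{1j})$, so $\dmat_{**}$ is a genuine covariance matrix; factoring $\dmat_{**}=LL'$ gives $S=(\tx'L)(\tx'L)'$ and $r_j=(\tx'L)(L'y_j)$, whence $r_j\in\mathrm{range}(\tx'L)=\mathrm{range}(S)$, and since $SS^{(-)}$ is a projector fixing $\mathrm{range}(S)$ we get $SS^{(-)}r_j=r_j$. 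With the first-order condition verified, Lemma \ref{lemma.allbopt} places $b^{sep}_\II$ in the optimal set, which is the claim.

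The main obstacle is precisely this last step, the range identity $r_j\in\mathrm{range}(S)$, which is where positive semi-definiteness of $\dmat_{**}$ (via its factorization $\dmat_{**}=LL'$) does the real work; everything before it is bookkeeping of the sign patterns from the $-\tx$, $-y_0$ columns and of the scalar factors $p$, $q$, $pq$ produced by Lemma \ref{LA1}, and the final line is a direct appeal to Lemma \ref{lemma.allbopt}.
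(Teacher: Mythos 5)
Your proof is correct, and it takes a genuinely different route from the paper's. The paper proves the lemma by explicitly constructing a generalized inverse of the partitioned matrix $\xx_\II'\dmat\xx_\II$ via Rohde's formula (\ref{GenInv}), showing that the Schur complement $\mathbf{q}$ vanishes under Lemma \ref{LA1}, and then producing $b^{sep}_\II$ as the member of the solution set (\ref{allGinv}) obtained from the particular choice $z=\bigl[\begin{smallmatrix}0\\ (\tx'\dmat_{11}\tx)^{(-)}\tx'\dmat_{11}y_1\end{smallmatrix}\bigr]$. You instead verify the normal equations (\ref{foc}) directly: Lemma \ref{LA1} collapses all four blocks to scalar multiples of $\dmat_{**}$, the system reduces to $SS^{(-)}r_j=r_j$ with $S=\tx'\dmat_{**}\tx$ and $r_j=\tx'\dmat_{**}y_j$, and that identity follows from the factorization $\dmat_{**}=LL'$ (valid since $\dmat_{**}$ is the covariance matrix of the $R_{1i}$) together with $\mathrm{range}(S)=\mathrm{range}(\tx'L)$ and the projector property of $SS^{(-)}$. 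Your sign bookkeeping for the $-\tx$ and $-y_0$ entries checks out, and the appeal to Lemma \ref{lemma.allbopt} is legitimate because that lemma identifies its optimal set with the full solution set of (\ref{foc}) (and convexity, from $\xx'\dmat\xx$ being positive semi-definite, makes every critical point a minimizer). What each approach buys: yours is more self-contained — it needs no external generalized-inverse formula and isolates the one substantive fact (the range condition) cleanly — while the paper's construction yields the explicit intermediate expression (\ref{b.optgZ}), which is reused verbatim in the later optimality proofs for the tyranny-of-the-minority coefficients, so the extra machinery is amortized over subsequent results.
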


\begin{proof}
	From \cite{rhode}, for a positive semi-definite symmetric matrix
	\begin{align*}
	\m=\left[\begin{matrix}
	\mathbf{a} & \matc \\ \matc' & \mathbf{b}
	\end{matrix}\right]
	\end{align*}
	a generalized inverse, call it $g$, is given by
	\begin{align}\label{GenInv}
	\mathbf{m}^{(g)}=\left[\begin{matrix}
	\mathbf{a}^{(-)}+\mathbf{a}^{(-)}\matc \mathbf{q}^{(-)}\matc' \mathbf{a}^{(-)} & -\mathbf{a}^{(-)}\matc \mathbf{q}^{(-)} \\ -\mathbf{q}^{(-)}\matc'\mathbf{a}^{(-)} & \mathbf{q}^{(-)}
	\end{matrix}\right]
	\end{align}
	where $\mathbf{q}= \mathbf{b}-\matc'\mathbf{a}^{(-)} \matc $ and $(.)^{(-)}$ is the Moore-Penrose generalized inverse as before.  Now if $ \m ={\xx_\II}' \dmat \xx_\II$ then $\mathbf{q}=\tx'\dmat_{11}\tx - \tx'\dmat_{10}\tx \left(\tx'\dmat_{00}\tx \right)^{(-)} \tx'\dmat_{01}\tx$. But because of Lemma \ref{LA1}, and using the definition of generalized inverse, it follows that $\mathbf{q}=0$. So the inverse reduces to
	\begin{align*}			
	\mathbf{m}^{(g)}=& \left[\begin{matrix}
	\mathbf{a}^{(-)} & 0 \\ 0 & 0 
	\end{matrix}\right]
	\\ =&\left[\begin{matrix}
	(\tx ' \dmat_{00} \tx )^{(-)} & 0 \\ 0 & 0 
	\end{matrix}\right].
	\end{align*}
	Therefore, 
	\begin{align}\label{b.optg}
	\left(\xx_\II ' \dmat \xx_\II \right)^{(g)}\xx_\II'\dmat y=&
	\left[\begin{matrix}
	(\tx ' \dmat_{00} \tx )^{(-)} & 0 \\ 0 & 0  	\end{matrix} \right]
	\left[\begin{matrix}
	\tx'\dmat_{00}y_0-\tx'\dmat_{01} y_1 \\  -\tx'\dmat_{10}y_0+\tx'\dmat_{11} y_1
	\end{matrix} \right] \nonumber
	\\=&
	\left[\begin{matrix}
	(\tx ' \dmat_{00} \tx )^{(-)}\left(\tx'\dmat_{00}y_0-\tx'\dmat_{01} y_1 \right)\\ 0
	\end{matrix} \right]. 
	\end{align}
	Next, using (\ref{b.optg}) and Lemma \ref{lemma.allbopt}, 
	\begin{align}\label{b.optgZ}
	b^{opt,gz}_\II= &
	 \left(\xx ' \dmat \xx \right)^{(g)}\xx'\dmat y + \left(\I_l-
	\left[\begin{matrix}
	(\tx ' \dmat_{00} \tx )^{(-)} & 0 \\ 0 & 0  	\end{matrix} 
	\right] 
	\left[\begin{matrix} \tx' \dmat_{00}\tx & \tx'\dmat_{01}\tx \\ \tx'\dmat_{10}\tx & \tx'\dmat_{11}\tx \end{matrix}\right]
	\right)z \nonumber
	\\= &
	 \left[\begin{matrix}
	 (\tx ' \dmat_{00} \tx )^{(-)}\left(\tx'\dmat_{00}y_0-\tx'\dmat_{01} y_1 \right)\\ 0
	 \end{matrix} \right] +
	\left[\begin{matrix}
	\I_{\scriptscriptstyle (k+1)}-(\tx ' \dmat_{00} \tx )^{(-)}\tx' \dmat_{00}\tx & (\tx ' \dmat_{00} \tx )^{(-)}\tx' \dmat_{01}\tx \\ 0 & \I_{\scriptscriptstyle (k+1)}  	\end{matrix} 
	\right]  z . 
	\end{align}
	Finally letting $z=\left[\begin{matrix} 0 \\ (\tx ' \dmat_{11} \tx )^{(-)} \tx \dmat_{11} y_1 \end{matrix}\right]$ leads to the result, with the last steps requiring the use of the reflexive property of the Moore-Penrose generalized inverse (i.e., for a symmetric matrix $\mathbf{a}$, $\mathbf{a}^{(-)}\mathbf{a}\mathbf{a}^{(-)}=\mathbf{a}^{(-)}$) and Lemma \ref{LA1}.  
\end{proof}

\begin{remark}
	It is not always the case that an optimal coefficient vector has a ``separable" solution.  It can be the case that, in order to be optimal, the subvector of the coefficient associated with treatment potential outcomes must take account of control potential outcomes and vice versa. Surprisingly, this can be true even under the sharp null.
\end{remark}


Next, two additional lemmas will be necessary before showing the optimality of the OLS coefficient. Lemma \ref{LemmaDeMean} will show that, for completely randomized experiments, multiplying $\dmat_{11}$, $\dmat_{00}$, $\dmat_{01}$, or $\dmat_{10}$ by a length-$n$ column vector zero-centers the vector and rescales by a constant. Lemma \ref{LemmaCovar} will show that matrices such as $\tx'\dmat_{11}\tx$ and $\tx'\dmat_{11}{y_{1}}$ represent finite-population covariance matrices rescaled by constants.

\begin{lemma}\label{LemmaDeMean}
	In a completely randomized experiment, $\dmat_{11} \tx =\frac{n n_0}{(n-1)n_1}(\tx-1_{\scriptscriptstyle n} \mu_{\tx} )$, with $1_{\scriptscriptstyle n}$ as a $(n\times 1)$ vector of ones and $\mu_{\tx}$ a $k+1$ rowvector giving the column means of $\tx$.  Likewise, in a completely randomized experiment, $\dmat_{00}\tx=\frac{n n_1}{(n-1)n_0}(\tx-1_{\scriptscriptstyle n} \mu_{\tx} )$.  And $\dmat_{10}\tx=\dmat_{01}\tx=-\frac{n}{n-1}(\tx-1_{\scriptscriptstyle n} \mu_{\tx} )$.
\end{lemma}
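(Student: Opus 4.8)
The plan is to compute the entries of the four $n\times n$ blocks of $\dmat$ directly from the first- and second-order assignment probabilities of a completely randomized design, observe that each block is a scalar multiple of the centering matrix $\I_{\scriptscriptstyle n}-n^{-1}1_{\scriptscriptstyle n}1_{\scriptscriptstyle n}'$, and then invoke the fact that this matrix acts on $\tx$ by subtracting column means.

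First I would record the relevant probabilities. With $n_1$ units assigned to treatment and $n_0=n-n_1$ to control, complete randomization gives $\pi_{1i}=n_1/n$ and $\pi_{0i}=n_0/n$ for every $i$; for $i\neq j$, $\pi_{1i1j}=\frac{n_1(n_1-1)}{n(n-1)}$, $\pi_{0i0j}=\frac{n_0(n_0-1)}{n(n-1)}$ and $\pi_{1i0j}=\pi_{0i1j}=\frac{n_1 n_0}{n(n-1)}$; and $\pi_{1i0i}=0$ because a unit cannot be in both arms.

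Next I would substitute these into the block definitions in (\ref{dmat.quad}). For $\dmat_{11}$, with $ij$ entry $\frac{\pi_{1i1j}-\pi_{1i}\pi_{1j}}{\pi_{1i}\pi_{1j}}$, the diagonal entries are $\frac{1-\pi_{1i}}{\pi_{1i}}=\frac{n_0}{n_1}$ and, after simplifying $\frac{n(n_1-1)}{n_1(n-1)}-1$, the off-diagonal entries are $-\frac{n_0}{n_1(n-1)}$; hence $\dmat_{11}=\frac{n_0}{n_1}\I_{\scriptscriptstyle n}-\frac{n_0}{n_1(n-1)}\left(1_{\scriptscriptstyle n}1_{\scriptscriptstyle n}'-\I_{\scriptscriptstyle n}\right)=\frac{n n_0}{n_1(n-1)}\left(\I_{\scriptscriptstyle n}-n^{-1}1_{\scriptscriptstyle n}1_{\scriptscriptstyle n}'\right)$. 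The identical computation with the arms interchanged gives $\dmat_{00}=\frac{n n_1}{n_0(n-1)}\left(\I_{\scriptscriptstyle n}-n^{-1}1_{\scriptscriptstyle n}1_{\scriptscriptstyle n}'\right)$, and for $\dmat_{10}$ (and likewise $\dmat_{01}$, which equals $\dmat_{10}$ for this design) the diagonal entries are $-1$ and the off-diagonal entries are $\frac{1}{n-1}$, so $\dmat_{10}=\dmat_{01}=-\I_{\scriptscriptstyle n}+\frac{1}{n-1}\left(1_{\scriptscriptstyle n}1_{\scriptscriptstyle n}'-\I_{\scriptscriptstyle n}\right)=-\frac{n}{n-1}\left(\I_{\scriptscriptstyle n}-n^{-1}1_{\scriptscriptstyle n}1_{\scriptscriptstyle n}'\right)$.

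Finally, since $\mu_{\tx}=n^{-1}1_{\scriptscriptstyle n}'\tx$ is the row vector of column means, $n^{-1}1_{\scriptscriptstyle n}1_{\scriptscriptstyle n}'\tx=1_{\scriptscriptstyle n}\mu_{\tx}$, so left-multiplying $\tx$ by each block yields $\dmat_{11}\tx=\frac{n n_0}{n_1(n-1)}(\tx-1_{\scriptscriptstyle n}\mu_{\tx})$, $\dmat_{00}\tx=\frac{n n_1}{n_0(n-1)}(\tx-1_{\scriptscriptstyle n}\mu_{\tx})$ and $\dmat_{10}\tx=\dmat_{01}\tx=-\frac{n}{n-1}(\tx-1_{\scriptscriptstyle n}\mu_{\tx})$, which is the claim. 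There is no genuine obstacle; the only thing to watch is the algebraic simplification of the probability ratios — in particular getting the cancellations $\frac{n(n_1-1)}{n_1(n-1)}-1=-\frac{n_0}{n_1(n-1)}$ and $\frac{n}{n-1}-1=\frac{1}{n-1}$ right — after which everything is bookkeeping.
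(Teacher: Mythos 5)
Your proposal is correct and follows essentially the same route as the paper's proof: compute the diagonal and off-diagonal entries of each block from the first- and second-order assignment probabilities, recognize each block as a scalar multiple of the centering matrix $\I_{\scriptscriptstyle n}-n^{-1}1_{\scriptscriptstyle n}1_{\scriptscriptstyle n}'$ (the paper phrases this by rescaling $\dmat_{11}$ to a matrix with diagonal $\frac{n-1}{n}$ and off-diagonals $-\frac{1}{n}$, which is the same matrix), and then apply it to $\tx$ to subtract column means. The arithmetic in your simplifications checks out, so nothing further is needed.
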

\begin{proofatend}
	By the definition of $\dmat_{11}$ above, in a completely randomized design the diagonal elements of $\dmat_{11}$ are 
	\begin{align*}
	\frac{\pi_{1i}-\pi_{1i}\pi_{1i}}{\pi_{1i}\pi_{1i}}=&\frac{\frac{n_1}{n}-\frac{n_1}{n}\frac{n_1}{n}}{\frac{n_1}{n}\frac{n_1}{n}}
	\\=&\frac{n-n_1}{n_1}
	\\ =&\frac{n_0}{n_1}
	\end{align*}
	and off-diagonal elements
	\begin{align*}
	\frac{\pi_{1i1j}-\pi_{1i}\pi_{1j}}{\pi_{1i}\pi_{1j}}=&\frac{\frac{n_1}{n}\frac{n_1-1}{n-1}-\frac{n_1}{n}\frac{n_1}{n}}{\frac{n_1}{n}\frac{n_1}{n}}
	\\=&-\frac{1}{n-1}\frac{n_0}{n_1}.
	\end{align*}
	So if we define
	\begin{align*}
	\dmat^*_{11}=\frac{n_1(n-1)}{n_0 n}\dmat_{11}
	\end{align*}
	then $\dmat^*_{11}$ has diagonal elements $\frac{n-1}{n}$ and off-diagonals $-\frac{1}{n}$, so that we can see that $\dmat^*_{11} \tx = \tx - 1_n\mu_{\tx}$ returns the de-meaned $\tx$. Therefore, $\dmat_{11}$ is a matrix that, when post-multiplied by $\tx$, returns a de-meaned $\tx$ that has been multiplied by the constant $\frac{nn_0}{(n-1)n_1}$. The proofs for $\dmat_{00}\tx$, $\dmat_{01}\tx$ and $\dmat_{01}\tx$ are analogous.
\end{proofatend}
\begin{proof}
	Provided in Appendix.
\end{proof}

\begin{corollary}\label{corollaryDeMean}
	For any constant vector, $c_{\scriptscriptstyle n}$, $\dmat_{00} c_{\scriptscriptstyle n}= \dmat_{11} c_{\scriptscriptstyle n}= \dmat_{01} c_{\scriptscriptstyle n}= \dmat_{10} c_{\scriptscriptstyle n} =0_{\scriptscriptstyle n}$.
\end{corollary}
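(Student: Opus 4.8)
The plan is to get this as an immediate consequence of Lemma \ref{LemmaDeMean}. First I would write an arbitrary constant vector as $c_{\scriptscriptstyle n}=c\,1_{\scriptscriptstyle n}$ for some scalar $c$, and observe that its column mean is exactly $c$, so the de-meaned version $c_{\scriptscriptstyle n}-1_{\scriptscriptstyle n}\mu_{c_{\scriptscriptstyle n}}=c\,1_{\scriptscriptstyle n}-1_{\scriptscriptstyle n}c=0_{\scriptscriptstyle n}$. Then I would apply Lemma \ref{LemmaDeMean} with the single column $c_{\scriptscriptstyle n}$ in place of $\tx$ (the identities in that lemma are linear in their right-hand factor, so they hold column by column, in particular for any single constant column): this gives $\dmat_{11}c_{\scriptscriptstyle n}=\tfrac{nn_0}{(n-1)n_1}(c_{\scriptscriptstyle n}-1_{\scriptscriptstyle n}\mu_{c_{\scriptscriptstyle n}})=0_{\scriptscriptstyle n}$, and analogously $\dmat_{00}c_{\scriptscriptstyle n}=\tfrac{nn_1}{(n-1)n_0}\,0_{\scriptscriptstyle n}=0_{\scriptscriptstyle n}$ and $\dmat_{10}c_{\scriptscriptstyle n}=\dmat_{01}c_{\scriptscriptstyle n}=-\tfrac{n}{n-1}\,0_{\scriptscriptstyle n}=0_{\scriptscriptstyle n}$.

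A self-contained variant that avoids even quoting the scaling constants is to check directly that, in a completely randomized design, every row of $\dmat_{11}$ sums to zero: the diagonal entry of row $i$ is $\tfrac{n_0}{n_1}$ while each of the remaining $n-1$ entries in that row equals $-\tfrac{1}{n-1}\tfrac{n_0}{n_1}$, so the row total is $\tfrac{n_0}{n_1}-(n-1)\cdot\tfrac{1}{n-1}\tfrac{n_0}{n_1}=0$. Hence $\dmat_{11}1_{\scriptscriptstyle n}=0_{\scriptscriptstyle n}$, and therefore $\dmat_{11}c_{\scriptscriptstyle n}=c\,\dmat_{11}1_{\scriptscriptstyle n}=0_{\scriptscriptstyle n}$. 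The same row-sum verification works for $\dmat_{00}$, $\dmat_{01}$, $\dmat_{10}$ using the entry formulas already computed in the proof of Lemma \ref{LemmaDeMean}.

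There is no real obstacle here; the one point worth stating carefully is that ``constant vector'' means the all-equal vector, so its column mean coincides with its common entry and de-meaning annihilates it, which is precisely why the conclusion drops out of Lemma \ref{LemmaDeMean}. In the paper I would present the one-line argument via Lemma \ref{LemmaDeMean} and, if desired, add a sentence noting the equivalent row-sum interpretation.
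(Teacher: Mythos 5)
Your proposal is correct and matches the paper's intent: the corollary is stated without a separate proof precisely because, as you observe, a constant vector equals its own mean so the de-meaning in Lemma \ref{LemmaDeMean} annihilates it. The additional row-sum verification is a fine equivalent check but adds nothing beyond the one-line argument the paper implicitly relies on.
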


\begin{lemma}\label{LemmaCovar}
	In a completely randomized design
	\begin{align*}
	\tx'\dmat_{11}\tx
	=& c_{11}\var(\tx)
	\\ \tx'\dmat_{00}\tx=& c_{00}\var(\tx),
	\\		\tx'\dmat_{01}\tx=& c_{01}\var(\tx),
	\\		\text{and } \tx'\dmat_{10}\tx=& c_{10}\var(\tx),
	\end{align*}
	where $\var(\tx):=n^{-1}\sum_i(\tx_i-\mu_{\tx})(\tx_i-\mu_{\tx})'$ is the finite-population variance-covariance matrix for $\tx$, $c_{11} := \frac{n^2 n_0}{(n-1)n_1}$, $c_{00} := \frac{n^2 n_1}{(n-1)n_0}$, and $c_{01}=c_{10} := -\frac{n^2}{(n-1)}$.
	Similarly, 
	\begin{align*}
	\tx'\dmat_{11} y_1
	 =&c_{11}\cov(\tx, y_1)
	 \\ 	\tx'\dmat_{00}y_0=& c_{00}\cov(\tx, y_0),
	 \\		\tx'\dmat_{01} y_1=& c_{01}\cov(\tx, y_1),
	 \\		\text{and } \tx'\dmat_{10}y_0=& c_{10}\cov(\tx, y_0).
	\end{align*}
	where, for example, $\cov(\tx, y_1):=n^{-1}\sum_i(\tx_i-\mu_{\tx})(y_{1i}-\mu_{y_1})'$ is a vector of finite-population covariances between $y_1$ and $x$'s.
\end{lemma}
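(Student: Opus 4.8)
The plan is to reduce the entire lemma to Lemma \ref{LemmaDeMean} plus the elementary identity that turns a raw cross-product matrix into a mean-centered one. The first observation I would make is that the computation carried out in the proof of Lemma \ref{LemmaDeMean} used nothing about $\tx$ beyond the fact that it is a fixed $n$-row matrix: it relied only on the explicit form of the diagonal and off-diagonal entries of $\dmat_{11}$, $\dmat_{00}$, $\dmat_{01}$, $\dmat_{10}$ under complete randomization. Consequently the same scaling identities hold with $\tx$ replaced by the length-$n$ column vector $y_1$ or $y_0$; in particular $\dmat_{11} y_1 = \frac{n n_0}{(n-1)n_1}\bigl(y_1 - 1_{\scriptscriptstyle n}\mu_{y_1}\bigr)$, and analogously $\dmat_{00} y_0 = \frac{n n_1}{(n-1)n_0}\bigl(y_0-1_{\scriptscriptstyle n}\mu_{y_0}\bigr)$, $\dmat_{01} y_1 = -\frac{n}{n-1}\bigl(y_1-1_{\scriptscriptstyle n}\mu_{y_1}\bigr)$, and $\dmat_{10} y_0 = -\frac{n}{n-1}\bigl(y_0-1_{\scriptscriptstyle n}\mu_{y_0}\bigr)$.

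For the four variance statements I would left-multiply the identity $\dmat_{11}\tx = \frac{n n_0}{(n-1)n_1}(\tx - 1_{\scriptscriptstyle n}\mu_{\tx})$ by $\tx'$, use $\tx'1_{\scriptscriptstyle n} = n\mu_{\tx}'$, and then apply the standard cross-product identity $\tx'\tx - n\,\mu_{\tx}'\mu_{\tx} = \sum_i(\tx_i-\mu_{\tx})'(\tx_i-\mu_{\tx}) = n\,\var(\tx)$ to get
\[
\tx'\dmat_{11}\tx \;=\; \frac{n n_0}{(n-1)n_1}\bigl(\tx'\tx - n\,\mu_{\tx}'\mu_{\tx}\bigr) \;=\; \frac{n^2 n_0}{(n-1)n_1}\var(\tx) \;=\; c_{11}\var(\tx).
\]
The three remaining variance identities come out verbatim, with only the leading constant from Lemma \ref{LemmaDeMean} changing ($\frac{n n_1}{(n-1)n_0}$ for $\dmat_{00}$, and $-\frac{n}{n-1}$ for both $\dmat_{01}$ and $\dmat_{10}$), which produces exactly $c_{00}$ and $c_{01}=c_{10}$ as stated.

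For the four covariance statements I would repeat this step with the extended identities above: multiply $\dmat_{11}y_1 = \frac{n n_0}{(n-1)n_1}(y_1 - 1_{\scriptscriptstyle n}\mu_{y_1})$ on the left by $\tx'$, use $\tx'1_{\scriptscriptstyle n} = n\mu_{\tx}'$, and invoke $\tx'y_1 - n\,\mu_{\tx}'\mu_{y_1} = \sum_i(\tx_i-\mu_{\tx})(y_{1i}-\mu_{y_1}) = n\,\cov(\tx, y_1)$ to conclude $\tx'\dmat_{11}y_1 = c_{11}\cov(\tx, y_1)$, and likewise for $\dmat_{00}y_0$, $\dmat_{01}y_1$, $\dmat_{10}y_0$ with the corresponding constants.

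I do not expect a genuine obstacle here; the proof is essentially a one-line consequence of Lemma \ref{LemmaDeMean}. The only points needing care are bookkeeping of the row-versus-column orientation of $\mu_{\tx}$ and $\mu_{y_1}$ (so that $1_{\scriptscriptstyle n}\mu_{\tx}$ is the $n\times(k+1)$ matrix whose rows are the column means and $\tx'1_{\scriptscriptstyle n}\mu_{\tx} = n\,\mu_{\tx}'\mu_{\tx}$), and stating explicitly that the derivation in Lemma \ref{LemmaDeMean} applies unchanged when $\tx$ is replaced by $y_1$ or $y_0$. As an alternative to re-deriving the $y$-versions, one could instead note that $\dmat_{11}$, $\dmat_{00}$, $\dmat_{01}$, $\dmat_{10}$ annihilate constant vectors (Corollary \ref{corollaryDeMean}), so that $\tx'\dmat_{11}\tx = (\tx-1_{\scriptscriptstyle n}\mu_{\tx})'\dmat_{11}(\tx-1_{\scriptscriptstyle n}\mu_{\tx})$ and similarly with $y_1$ in either slot, but the direct computation above is the shortest route.
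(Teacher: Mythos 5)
Your proposal is correct and follows essentially the same route as the paper's own (very terse) proof: both reduce the lemma to Lemma \ref{LemmaDeMean} together with the standard re-centering identity $\sum_i(\tx_i-\mu_{\tx})\tx_i'=\sum_i(\tx_i-\mu_{\tx})(\tx_i-\mu_{\tx})'$. The only cosmetic difference is that you re-derive the $y_0,y_1$ versions of Lemma \ref{LemmaDeMean} explicitly, whereas one could equivalently exploit the symmetry of the $\dmat$ blocks and write $\tx'\dmat_{11}y_1=(\dmat_{11}\tx)'y_1$; either way the argument is complete.
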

\begin{proof}
	Results follow from Lemma \ref{LemmaDeMean} and the fact that $\sum_i(\tx_i-\mu_{\tx})\tx_i'=\sum_i(\tx_i-\mu_{\tx})(\tx_i-\mu_{\tx})'$.
\end{proof}

Finally, the next two theorems present the main results of the section.
\begin{theorem}
	In a completely randomized design with specification \emph{II}, the OLS coefficient given in Definition \ref{def.ols} minimizes the variance of the fixed-coefficient generalized regression estimator, i.e., $b^{ols}_\II$ is in the set of optimal fixed-coefficients defined in Lemma \ref{lemma.allbopt}.
\end{theorem}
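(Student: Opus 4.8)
The plan is to check that $b^{ols}_\II$ satisfies the first-order condition (\ref{foc}), i.e.\ that $(\xx_\II'\dmat\xx_\II)\,b^{ols}_\II=\xx_\II'\dmat y$. By Lemma \ref{lemma.allbopt} the set of optimal fixed-coefficients is exactly the solution set of (\ref{foc}), so membership of $b^{ols}_\II$ is immediate once the first-order condition is verified. It is worth noting up front why one cannot simply match $b^{ols}_\II$ against the separated optimal coefficient $b^{sep}_\II$ of Lemma \ref{lemma.sep}: the two agree on the slope blocks but not on the intercept entries, since the intercept directions are annihilated by $\dmat$. This is precisely why Lemma \ref{lemma.allbopt}, which describes the \emph{entire} solution set rather than one representative, is what is needed here.

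First I would write $\xx_\II$ in $2\times 2$ block form as $\begin{bmatrix}-\tx & \mathbf{0}\\ \mathbf{0} & \tx\end{bmatrix}$ with $\tx=[1_{\scriptscriptstyle n}\ \x]$, recall that $y=\begin{bmatrix}-y_0\\ y_1\end{bmatrix}$, and combine these with the quadrant decomposition (\ref{dmat.quad}) of $\dmat$ to obtain
\[
\xx_\II'\dmat\xx_\II=\begin{bmatrix}\tx'\dmat_{00}\tx & -\tx'\dmat_{01}\tx\\ -\tx'\dmat_{10}\tx & \tx'\dmat_{11}\tx\end{bmatrix},\qquad
\xx_\II'\dmat y=\begin{bmatrix}\tx'\dmat_{00}y_0-\tx'\dmat_{01}y_1\\ -\tx'\dmat_{10}y_0+\tx'\dmat_{11}y_1\end{bmatrix}.
\]
Next I would apply Lemma \ref{LemmaCovar} to replace each block $\tx'\dmat_{ab}\tx$ by $c_{ab}\var(\tx)$ and each $\tx'\dmat_{ab}y_b$ by $c_{ab}\cov(\tx,y_b)$. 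After this substitution the two block-rows of (\ref{foc}) become $c_{00}\var(\tx)b_0-c_{01}\var(\tx)b_1=c_{00}\cov(\tx,y_0)-c_{01}\cov(\tx,y_1)$ and $-c_{10}\var(\tx)b_0+c_{11}\var(\tx)b_1=-c_{10}\cov(\tx,y_0)+c_{11}\cov(\tx,y_1)$, where $b_0,b_1$ are the control and treatment subvectors of the candidate coefficient. Hence it suffices to show that, for $b=b^{ols}_\II$ from Definition \ref{def.ols}, one has $\var(\tx)\,b_0=\cov(\tx,y_0)$ and $\var(\tx)\,b_1=\cov(\tx,y_1)$.

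For that final step I would use that $\x$ is zero-centered, so $\tx_i-\mu_{\tx}=[\,0\ \ \x_i\,]$ and therefore $\var(\tx)=\begin{bmatrix}0 & 0\\ 0 & \var(\x)\end{bmatrix}$ while $\cov(\tx,y_0)=\begin{bmatrix}0\\ \cov(\x,y_0)\end{bmatrix}$. Multiplying $\var(\tx)$ into $b_0^{ols}=\begin{bmatrix}\mu_{y_0}\\ \var(\x)^{-1}\cov(\x,y_0)\end{bmatrix}$ kills the intercept entry and leaves $\var(\x)\var(\x)^{-1}\cov(\x,y_0)=\cov(\x,y_0)$ in the slope block (using that $\var(\x)$ is invertible, which is exactly the standing assumption making $b^{ols}_\II$ well-defined), so $\var(\tx)\,b_0^{ols}=\cov(\tx,y_0)$; the treatment block is identical with $y_0$ replaced by $y_1$. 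Substituting back, both block-rows of (\ref{foc}) hold identically, so $b^{ols}_\II$ solves the first-order condition and Lemma \ref{lemma.allbopt} delivers the conclusion.

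The main obstacle is conceptual rather than computational: recognizing that one should reduce to the first-order condition rather than try to exhibit $b^{ols}_\II$ as a specific $b^{opt,gz}$ (which would force one to identify the particular $z$ supplying the intercepts), and seeing that the singular constant column of $\tx$ is exactly the mechanism by which the intercept $\mu_{y_0}$ drops out of $\var(\tx)\,b_0^{ols}$. Once this reduction is made, everything else is the bookkeeping already packaged in Lemmas \ref{LemmaDeMean}--\ref{LemmaCovar}.
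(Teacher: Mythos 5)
Your proposal is correct, but it takes a genuinely different route from the paper. The paper constructs $b^{ols}_\II$ as an explicit member of the parametrized solution set of Lemma \ref{lemma.allbopt}: it first invokes separability (Lemma \ref{lemma.sep}, which rests on Rohde's formula for the generalized inverse of a partitioned matrix) to reduce to $(\tx'\dmat_{11}\tx)^{(-)}\tx'\dmat_{11}y_1$, simplifies that via Lemma \ref{LemmaCovar} to $[\,0;\ \var(\x)^{-1}\cov(\x,y_1)\,]$, and then recovers the intercept $\mu_{y_1}$ by choosing a particular $z$ in the representation $b^{opt,gz}$. You instead bypass the construction entirely and verify that $b^{ols}_\II$ satisfies the normal equations (\ref{foc}) directly: the block decomposition of $\xx_\II'\dmat\xx_\II$ and $\xx_\II'\dmat y$, the substitution from Lemma \ref{LemmaCovar}, and the observation that zero-centering makes $\var(\tx)$ singular in exactly the direction of the intercept (so that $\var(\tx)b_0^{ols}=\cov(\tx,y_0)$ regardless of the intercept value) together close the argument. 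Your computation checks out, and your identification of ``optimal'' with ``solves (\ref{foc})'' is consistent with the paper's own usage, since the objective is a convex quadratic with PSD Hessian $2\xx'\dmat\xx$ and Lemma \ref{lemma.allbopt} defines the optimal set as the solution set of (\ref{foc}). What your route buys is economy: no appeal to Lemma \ref{lemma.sep} or to partitioned generalized inverses, and no need to hunt for the right $z$. What the paper's route buys is the explicit intermediate formula (\ref{b.optgZ}) describing the whole family of optimal coefficients for equal-$\pi_1$ designs, which it reuses immediately afterward to prove optimality of the tyranny-of-the-minority coefficient by a different choice of $z$; your verification-only argument would have to be rerun from scratch for that second result.
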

\begin{proof}
	Using Lemma \ref{LemmaCovar} write
	\begin{align*}
	\left(\tx' \dmat_{11} \tx \right)^{(-)}\tx' \dmat_{11} y_1 =& 	\left[\begin{matrix}
	 0 & 0_{\scriptscriptstyle k}'\\ 0_{\scriptscriptstyle  k } & \var(\x)
	\end{matrix}
	\right]^{(-)}\left[\begin{matrix}
	0  \\ \cov(\x, y_1)
	\end{matrix}
	\right]
		\\ =& \left[\begin{matrix}
		0  \\ \var(\x)^{(-)}  \cov(\x, y_1)
		\end{matrix}
		\right]
	\end{align*}
And note that unless the columns of $\x$ are colinear, $(.)^{(-)}$ is equivalent to the usual inverse. Now use Lemma \ref{lemma.allbopt} and let
\begin{align*}
z=& \left[\begin{matrix}
\mu_{y_1}  \\ 0_{\scriptscriptstyle k }
\end{matrix}
\right],
\end{align*} 
where $\mu_{y_1}$ is he mean of treatment potential outcomes, to arrive at an optimal sub-vector for treatment potential outcomes is
\begin{align*}
& \left[\begin{matrix}
\mu_{y_1}   \\ \var(\x)^{-1}  \cov(\x, y_1)
\end{matrix}
\right].
\end{align*}
An analogous optimal sub-vector for control potential outcomes can be defined. The result follows.
\end{proof}



\begin{theorem}\label{b.ols.est.isOpt}
	Under Assumptions 1 and 2, in a completely randomized design with specification \emph{II}, the OLS coefficient estimator given in Definition \ref{def.ols.est} has a conjugate ATE estimator that obtains asymptotic minimum variance within the class of generalized regression estimators. 
\end{theorem}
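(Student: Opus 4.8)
The plan is to combine three results already established: the theorem just proved, that the finite-population OLS coefficient $b^{ols}_\II$ of Definition~\ref{def.ols} lies in the set of finite-sample optimal fixed coefficients for specification~II under complete randomization; the asymptotic variance theorem of Section~\ref{section.asymptotic.arguments}, which under Assumptions~1--2 identifies the limiting variance of $\sqrt n(\widehat\delta^{\tr}-\delta)$ with $\lim_{n\to\infty} n^{-1}u'\dmat u$, where $u=y-\xx b$ and $b$ is the probability limit of $\widehat b$; and Theorem~\ref{thrm.bopt}, which says $(y-\xx_\II b^f)'\dmat(y-\xx_\II b^f)$ is minimized over $b^f\in\mathds{R}^{\colsxx}$ at $b^{opt}$ for each fixed $n$ --- and hence, by the theorem just proved, also at $b^{ols}_\II$.

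First I would record that under complete randomization $\widehat b^{ols}_\II$ satisfies Assumption~2 with limit exactly $b^{ols}_\II$. Using the block structure of $\xx_\II$, $\widehat b^{ols}_\II=({\xx_\II}'\R\xx_\II)^{-1}{\xx_\II}'\R y$ splits into arm-specific pieces: its treatment subvector is $(\sum_i R_{1i}\tx_i\tx_i')^{-1}\sum_i R_{1i}\tx_i y_{1i}$ with $\tx=[1_{\scriptscriptstyle n}\ \x]$, and similarly for control. These are ratios of sample moments under sampling without replacement, which are root-$n$ consistent for the corresponding finite-population moments; since $\x$ is zero-centered, Slutsky's theorem gives $\widehat b^{ols}_\II-b^{ols}_\II=O_p(n^{-1/2})$, i.e. Assumption~2 holds with $r=1/2$. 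Then the asymptotic variance theorem of Section~\ref{section.asymptotic.arguments}, applied with $b=b^{ols}_\II$, gives that $\widehat\delta^{\tr,ols}_\II$ has limiting variance $\lim_{n\to\infty} n^{-1}u'\dmat u$, $u:=y-\xx_\II b^{ols}_\II$.

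Next I would compare with an arbitrary member of the class. Let $\widehat\delta^{\tr}$ be any generalized regression estimator formed from $\xx_\II$ (so the comparison is among estimators using the same covariate matrix, as in Theorem~\ref{thrm.bopt}) whose coefficient estimator satisfies Assumptions~1--2 with probability limit $b^*$; the same theorem gives its limiting variance as $\lim_{n\to\infty} n^{-1}(y-\xx_\II b^*)'\dmat(y-\xx_\II b^*)$. Because $b^*$ is a fixed vector and $b^{ols}_\II$ minimizes the finite-sample quadratic form for every $n$ (the theorem just proved together with Theorem~\ref{thrm.bopt}), we have $n^{-1}u'\dmat u\le n^{-1}(y-\xx_\II b^*)'\dmat(y-\xx_\II b^*)$ for all $n$; letting $n\to\infty$ shows the limiting variance of $\widehat\delta^{\tr,ols}_\II$ does not exceed that of $\widehat\delta^{\tr}$. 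Hence $\widehat\delta^{\tr,ols}_\II$ attains the minimum asymptotic variance in the class.

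The step I expect to be the main obstacle is the identification in the second paragraph: pinning down the probability limit of the OLS coefficient estimator in a completely randomized design as exactly $b^{ols}_\II$, at a polynomial rate. This requires controlling the random normalizing matrix $({\xx_\II}'\R\xx_\II)^{-1}$ and, implicitly, imposing mild regularity on the sequence of finite populations --- for instance uniformly bounded fourth moments of the covariates and a nondegenerate limiting $\var(\x)$ --- beyond Assumptions~1--2 as literally stated, since those concern first moments of $[y\ \xx_\II]$ rather than the sample second moments that enter $\widehat b^{ols}_\II$. The remaining steps are bookkeeping; in particular, the interchange of limit and minimization is immediate because the governing inequality holds at every $n$ and not merely in the limit.
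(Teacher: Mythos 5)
Your proposal is correct and follows essentially the same route as the paper: identify the probability limit of $\widehat{b}^{ols}_\II$ as $b^{ols}_\II$, then combine the finite-sample optimality of $b^{ols}_\II$ (from the preceding theorem) with the asymptotic variance expression of Section \ref{section.asymptotic.arguments}. The paper establishes the limit by computing $\E[{\xx_\II}'\R\xx_\II]^{-1}\E[{\xx_\II}'\R y]$ and appealing to ``suitable regularity conditions,'' whereas you decompose into arm-specific sample moments under sampling without replacement and are more explicit about the rate and the extra moment conditions needed --- a minor refinement of the same argument rather than a different one.
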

\begin{proofatend}
To see that $\widehat{b}^{ols}_\II$ estimates $b^{ols}_\II$ note that
\begin{align*}
\E \left[{\xx_\II}' \R \xx_\II \right] = & \left[\begin{matrix}
-1'_{\scriptscriptstyle n} & 0'_{\scriptscriptstyle n}
\\ -\x' &  0'_{\scriptscriptstyle (\frac{k}{2}-1) \times n}
\\ 0'_{\scriptscriptstyle n} & 1'_{\scriptscriptstyle n}
\\ 0'_{\scriptscriptstyle (\frac{k}{2}-1) \times n} & \x'
\end{matrix}\right]
\left[\begin{matrix}
-\frac{n_0}{n} 1_{\scriptscriptstyle n} & -\frac{n_0}{n}\x & 0_{\scriptscriptstyle n} & 0_{\scriptscriptstyle (\frac{k}{2}-1) \times n}
\\ 0_{\scriptscriptstyle n} & 0_{\scriptscriptstyle (\frac{k}{2}-1) \times n} & \frac{n_1}{n} 1_{\scriptscriptstyle n} & \frac{n_1}{n} \x
\end{matrix}\right]
\\ = & \left[\begin{matrix}
n_0 & 0 & 0 & 0 
\\ 0 & n_0 \var(\x) & 0 & 0 
\\ 0 & 0 & n_1 & 0 
\\ 0 & 0 & 0 & n_1 \var(\x)
\end{matrix}\right]
\end{align*}
and 
\begin{align*}
\E \left[{\xx_\II}' \R y \right] = & \left[\begin{matrix}
-1'_{\scriptscriptstyle n} & 0'_{\scriptscriptstyle n}
\\ -\x' &  0'_{\scriptscriptstyle (\frac{k}{2}-1) \times n}
\\ 0'_{\scriptscriptstyle n} & 1'_{\scriptscriptstyle n}
\\ 0'_{\scriptscriptstyle (\frac{k}{2}-1) \times n} & \x'
\end{matrix}\right]
\left[\begin{matrix}
-\frac{n_0}{n} y_0 \\ \frac{n_1}{n} y_1 
\end{matrix}\right]
\\ = & \left[\begin{matrix}
n_0 \mu_{y_0} \\ n_0 \cov(\x, y_0) \\n_1 \mu_{y_1} \\ n_1 \cov(\x, y_1) 
\end{matrix}\right]
\end{align*}
so that 
\begin{align*}
\E \left[{\xx_\II}' \R \xx_\II \right]^{-1}\E \left[{\xx_\II}' \R y \right] = & \left[\begin{matrix}
n_0 & 0 & 0 & 0 
\\ 0 & n_0 \var(\x) & 0 & 0 
\\ 0 & 0 & n_1 & 0 
\\ 0 & 0 & 0 & n_1 \var(\x)
\end{matrix}\right]^{-1}\left[\begin{matrix}
n_0 \mu_{y_0} \\ n_0 \cov(\x, y_0) \\n_1 \mu_{y_1} \\ n_1 \cov(\x, y_1) 
\end{matrix}\right]
\\ = & \left[\begin{matrix}
n_0^{-1} & 0 & 0 & 0 
\\ 0 & n_0^{-1} \var(\x)^{-1} & 0 & 0 
\\ 0 & 0 & n_1^{-1} & 0 
\\ 0 & 0 & 0 & n_1^{-1} \var(\x)^{-1}
\end{matrix}\right]^{-1}\left[\begin{matrix}
n_0 \mu_{y_0} \\ n_0 \cov(\x, y_0) \\n_1 \mu_{y_1} \\ n_1 \cov(\x, y_1) 
\end{matrix}\right]
\\ =& \left[\begin{matrix}
\mu_{y_0} \\ \var(\x)^{-1} \cov(\x, y_0) \\ \mu_{y_1} \\ \var(\x)^{-1} \cov(\x, y_1) 
\end{matrix}\right]
\end{align*}
Hence under suitable regularity conditions $\widehat{b}^{ols}_\II\rightarrow {b}^{ols}_\II$ so that $\widehat{\delta}^{\tr,ols}_\II$ is asymptotically optimal.
\end{proofatend}
\begin{proof}
	Provided in Appendix.
\end{proof}


\subsection{Tyranny of the minority is optimal for completely randomized designs with specification I}\label{section.tyranny.opt}

In this section, it will be shown that an optimal coefficient for the fixed-coefficient generalized regression estimator for completely randomized designs and specification I is the ``tyranny of the minority" coefficient, call it $b^{tyr}_\sI$, and a WLS estimator of the coefficient will be defined.  It is noteworthy that, by contrast, there is no OLS analogue that is generally asymptotically optimal for specification I for completely randomized experiments. The section will also show that tyranny of the minority can achieve asymptotic precision using specification I that is as good as optimal estimators that use specification II. 

First define the tyranny of the minority coefficient for specification I and its estimator.

\begin{definition}[Tyranny of the minority coefficient]\label{def.tyr}
	The ``tyranny of the minority" coefficient for specification \emph{I} is given by
	\emph{\begin{align*}
		b^{tyr}_\sI:=&\left({\xx_\sI}'  \left(\I_{\scriptscriptstyle 2n}-\bpi\right) \xx_\sI \right)^{-1} {\xx_\sI}' \left(\I_{\scriptscriptstyle 2n}-\bpi\right) y
		\\=&\left[\begin{matrix}
		\mu_{y_0} \\ 
		\mu_{y_1} \\
		\frac{n_1}{n} \var(\x)^{(-)}\cov(\x, y_0) + \frac{n_0}{n} \var(\x)^{(-)}\cov(\x, y_1)
		\end{matrix}\right]
		\end{align*}}where $\mu_{y_0}$ and $\mu_{y_1}$ are means of control and treatment potential outcomes, respectively, and $\I_{\scriptscriptstyle 2n}$ is a $2n \times 2n$ identity matrix.
\end{definition}

Note in the that $\var( \x )^{(-)}\cov(\x, y_0)$ is the population least squares coefficients when regressing $y_0$ on $\x$, and $\var(\x)^{(-)}\cov(\x, y_1)$ is, likewise, the population least squares coefficients when regressing $y_1$ on $\x$. The weights for combining these two components, $\frac{n_1}{n}$ and $\frac{n_0}{n}$, respectively, are such that the coefficient for the arm with fewer units gets more weight. Hence, the name ``tyranny of the minority".

\begin{definition}[Tyranny of the minority coefficient estimator]\label{def.tyr.est} The ``tyranny of the minority coefficient estimator" for specification \emph{I} is
	\emph{\begin{align*}
		\widehat{b}^{tyr}_\sI=& \left({\xx_\sI}' \R \left(\bpi^{-1}- \I_{\scriptscriptstyle 2n}\right) \xx_\sI \right)^{-1} {\xx_\sI}' \R \left(\bpi^{-1}- \I_{\scriptscriptstyle 2n}\right) y.
		\end{align*}} where $\I_{\scriptscriptstyle 2n}$ is a $2n \times 2n$ identity matrix. 	
\end{definition}

To prove that $b^{tyr}_\sI$ in Definition \ref{def.tyr} is an optimal choice of coefficient for the fixed-coefficient generalized regression estimator, first define an equivalent coefficient for specification II.

\begin{definition}[Tyranny of the minority coefficient for specification II]\label{def.tyr.II}
	The ``tyranny of the minority coefficient for specification \emph{II}" is 
	\emph{\begin{align*}
		b^{tyr}_\II = \left[\begin{matrix}
		\mu_{y_0} \\ \frac{n_1}{n}\var({\x})^{-1}\cov({\x}, y_0)+\frac{n_0}{n}\var({\x})^{-1}\cov(\x, y_1)
		\\ 		 \mu_{y_1} \\ \frac{n_1}{n}\var({\x})^{-1}\cov( {\x}, y_0)+\frac{n_0}{n}\var({\x})^{-1}\cov({\x}, y_1)
		\end{matrix} \right].
		\end{align*}}
\end{definition}

Comparing Definition \ref{def.tyr.II} to Definition \ref{def.tyr} reveals that the ``slope" coefficients, given by $\frac{n_1}{n}\var(\x)^{-1}\cov(\x, y_0)+\frac{n_0}{n}\var(\x)^{-1}\cov(\x, y_1)$, are identical for the two specifications.  The implication is that $\xx_\sI b^{tyr}_\sI=\xx_\II b^{tyr}_\II$ and hence, the conjugate ATE estimators are algebraically equivalent. Therefore, if $b^{tyr}_\II$ is in the set of optimal choices for a fixed-coefficient in specification II, then $b^{tyr}_\sI$ must be among the optimal coefficients for the fixed-coefficient generalized regression estimator for specification I.

\begin{theorem}
	For completely randomized experiments with specification \emph{II}, the tyranny of the minority coefficient given in Definition \ref{def.tyr.II} is an optimal coefficient for the fixed-coefficient generalized regression estimator.
\end{theorem}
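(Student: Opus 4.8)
The cleanest route is to bypass the $z$-machinery of Lemma~\ref{lemma.allbopt} and verify the normal equation directly. By the derivation in Theorem~\ref{thrm.bopt}, the variance $n^{-2}u'\dmat u$ with $u=y-\xx_\II b^f$ is a convex quadratic in $b^f$ (convex because $\dmat=\V(1'_{\scriptscriptstyle 2n}\bpi^{-1}\R)$ is positive semi-definite), so $b^f$ is optimal for the fixed-coefficient generalized regression estimator if and only if it satisfies the stationarity condition $(\ref{foc})$, i.e.\ $(\xx_\II'\dmat\xx_\II)b^f=\xx_\II'\dmat y$; equivalently, by Lemma~\ref{lemma.allbopt}, if and only if it lies in $\{b^{opt,gz}\}$. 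So the plan is: plug $b^{tyr}_\II$ from Definition~\ref{def.tyr.II} into the normal equation and check both sides agree.

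First I would write $\xx_\II=\bigl[\begin{smallmatrix}-\tx & 0\\ 0 & \tx\end{smallmatrix}\bigr]$ with $\tx=[1_{\scriptscriptstyle n}\ \x]$ and invoke Lemma~\ref{LemmaCovar} to obtain, in completely randomized designs,
\begin{align*}
\xx_\II'\dmat\xx_\II=\begin{bmatrix} c_{00}\var(\tx) & -c_{01}\var(\tx)\\ -c_{10}\var(\tx) & c_{11}\var(\tx)\end{bmatrix},\qquad
\xx_\II'\dmat y=\begin{bmatrix} c_{00}\cov(\tx,y_0)-c_{01}\cov(\tx,y_1)\\ -c_{10}\cov(\tx,y_0)+c_{11}\cov(\tx,y_1)\end{bmatrix},
\end{align*}
with $c_{00},c_{01}=c_{10},c_{11}$ the constants of Lemma~\ref{LemmaCovar}. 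Since the leading column of $\tx$ is constant, $\var(\tx)=\bigl[\begin{smallmatrix}0 & 0\\ 0 & \var(\x)\end{smallmatrix}\bigr]$ and $\cov(\tx,y_j)=(0;\cov(\x,y_j))$, so the ``intercept'' rows of both sides of the normal equation vanish identically and only the ``slope'' rows carry content. Writing $b^{tyr}_\II=(\gamma_0;\gamma_1)$ with $\gamma_0=(\mu_{y_0};w)$, $\gamma_1=(\mu_{y_1};w)$ and $w:=\tfrac{n_1}{n}\var(\x)^{-1}\cov(\x,y_0)+\tfrac{n_0}{n}\var(\x)^{-1}\cov(\x,y_1)$, the key observation is that $\var(\tx)$ annihilates the intercepts $\mu_{y_j}$ and makes the two blocks coincide: $\var(\tx)\gamma_0=\var(\tx)\gamma_1=(0;\ \tfrac{n_1}{n}\cov(\x,y_0)+\tfrac{n_0}{n}\cov(\x,y_1))$.

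Consequently the top block of $(\xx_\II'\dmat\xx_\II)b^{tyr}_\II$ is $(c_{00}-c_{01})$ times this vector; using $c_{00}-c_{01}=\tfrac{n^3}{(n-1)n_0}$ and the explicit forms of $c_{00},c_{01}$, this collapses to $(0;\ c_{00}\cov(\x,y_0)-c_{01}\cov(\x,y_1))$, which is exactly the top block of $\xx_\II'\dmat y$. The bottom block is identical with $-c_{10}+c_{11}=\tfrac{n^3}{(n-1)n_1}$, so the normal equation holds and $b^{tyr}_\II$ is optimal. (If $\var(\x)$ is singular one replaces $\var(\x)^{-1}$ by the Moore--Penrose inverse and uses that $\cov(\x,y_j)$ lies in the column space of $\var(\x)$, exactly as in the OLS proof.) There is no conceptual obstacle; the only things needing care are the sign pattern of the off-diagonal blocks of $\xx_\II'\dmat\xx_\II$ and the arithmetic check that the particular tyranny weights $\tfrac{n_1}{n},\tfrac{n_0}{n}$ are precisely what reproduces the right-hand side --- that they are is the real content of the statement. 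An alternative, parallel to the proof that OLS is optimal, is to exhibit the explicit vector $z=(\mu_{y_0};\ 0_{\scriptscriptstyle k};\ \mu_{y_1};\ w)$ together with the generalized inverse $g$ of the proof of Lemma~\ref{lemma.sep}, and then simplify $b^{opt,gz}_\II$ using Lemma~\ref{LemmaCovar} and the reflexive property of the Moore--Penrose inverse; the same cancellation then yields $b^{tyr}_\II$.
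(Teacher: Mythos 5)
Your proof is correct, but it takes a different route from the paper's. The paper's own proof is a one-liner that leans on machinery already built for the OLS result: it starts from Lemma \ref{lemma.allbopt}, reuses equation (\ref{b.optgZ}) — which itself depends on the Rohde partitioned generalized inverse and the separability argument of Lemma \ref{lemma.sep} — and simply exhibits the choice $z=(\mu_{y_0};\,0_{\scriptscriptstyle k};\,\mu_{y_1};\,\tfrac{n_1}{n}\var(\x)^{-1}\cov(\x,y_0)+\tfrac{n_0}{n}\var(\x)^{-1}\cov(\x,y_1))$, leaving the simplification to the reader. You instead verify the normal equation $(\xx_\II'\dmat\xx_\II)b^{tyr}_\II=\xx_\II'\dmat y$ directly, using only Lemma \ref{LemmaCovar}, the block structure of $\xx_\II$, and convexity of the quadratic (which you rightly make explicit, since $\xx'\dmat\xx$ is positive semi-definite and hence any stationary point is a global minimizer — a point the paper leaves implicit). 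I checked your arithmetic: the blocks of $\xx_\II'\dmat\xx_\II$ and $\xx_\II'\dmat y$ are as you state, $\var(\tx)\gamma_0=\var(\tx)\gamma_1$ kills the intercepts and equalizes the two sub-blocks, and with $c_{00}-c_{01}=\tfrac{n^3}{(n-1)n_0}$ and $c_{11}-c_{10}=\tfrac{n^3}{(n-1)n_1}$ both blocks of the normal equation close. What your approach buys is self-containedness and transparency — it isolates exactly why the weights $\tfrac{n_1}{n},\tfrac{n_0}{n}$ are the right ones, without routing through the partitioned-inverse construction; what the paper's approach buys is brevity given that (\ref{b.optgZ}) was already derived, and it displays the tyranny coefficient as one member of the explicitly parametrized solution set alongside the OLS coefficient. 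Your closing remark correctly identifies the paper's actual route as the alternative.
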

\begin{proof}
	Beginning with Lemma \ref{lemma.allbopt} and again arriving at equation (\ref{b.optgZ}), this time let
	\begin{align*}
	z = & \left[\begin{matrix} \mu_{y_0} \\ 0_{\scriptscriptstyle k} \\  \mu_{y_1}  \\  \frac{n_1}{n}\var(\x)^{-1}\cov(\x, y_0)+\frac{n_0}{n}\var(\x)^{-1}\cov(\x, y_1)  \end{matrix} \right].
	\end{align*}
	The result follows.
\end{proof}
\begin{corollary}
	For completely randomized experiments with specification \emph{I}, the tyranny of the minority coefficient given in Definition \ref{def.tyr} is an optimal coefficient for the fixed-coefficient generalized regression estimator.
\end{corollary}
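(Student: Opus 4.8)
The plan is to deduce the corollary directly from the theorem immediately preceding it, using the algebraic identity $\xx_\sI b^{tyr}_\sI = \xx_\II b^{tyr}_\II$ that was already flagged in the discussion after Definition~\ref{def.tyr.II}. First I would recall that, by Lemma~\ref{lemma.finiteVar}, the variance of a fixed-coefficient generalized regression estimator depends on its coefficient $b^f$ only through the residual vector $u = y - \xx b^f$, namely $n^{-2} u'\dmat u$. More strongly, by form~(\ref{greg.a}) the estimator $\widehat{\delta}^{\tr}$ itself depends on $\xx$ and $b^f$ only through the fitted vector $\xx b^f$. So any two specification/coefficient pairs that produce the same fitted vector produce literally the same conjugate ATE estimator, hence the same variance.

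Second, I would verify $\xx_\sI b^{tyr}_\sI = \xx_\II b^{tyr}_\II$ by direct block multiplication. Writing $\beta := \frac{n_1}{n}\var(\x)^{(-)}\cov(\x, y_0) + \frac{n_0}{n}\var(\x)^{(-)}\cov(\x, y_1)$ for the common ``slope'' block appearing in both Definition~\ref{def.tyr} and Definition~\ref{def.tyr.II}, one checks that both products equal the length-$2n$ vector whose top block is $-\mu_{y_0} 1_{\scriptscriptstyle n} - \x\beta$ and whose bottom block is $\mu_{y_1} 1_{\scriptscriptstyle n} + \x\beta$. Consequently the residual vectors coincide, $y - \xx_\sI b^{tyr}_\sI = y - \xx_\II b^{tyr}_\II$, so the conjugate of $b^{tyr}_\sI$ under specification~I and the conjugate of $b^{tyr}_\II$ under specification~II have the same (finite-sample) variance $n^{-2} u'\dmat u$.

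Third, I would invoke the preceding theorem, which states that $b^{tyr}_\II$ lies in the set of optimal fixed-coefficients of Lemma~\ref{lemma.allbopt} for specification~II; hence the common variance value above is the minimum of $n^{-2} u'\dmat u$ over all coefficients $b^f$ attainable with $\xx_\II$. To upgrade this to optimality within specification~I, I would note that every column of $\xx_\sI$ lies in the column span of $\xx_\II$ — the block $(-\x;\,\x)$ equals $(-\x;\,0_{\scriptscriptstyle n}) + (0_{\scriptscriptstyle n};\,\x)$ — so the set of fitted vectors $\xx_\sI b^f$ is contained in the set of fitted vectors $\xx_\II b^f$, and therefore the specification-I minimum of $n^{-2}u'\dmat u$ is no smaller than the specification-II minimum. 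Since $b^{tyr}_\sI$ already attains the latter, it attains the former, so $b^{tyr}_\sI$ is an optimal coefficient for the fixed-coefficient generalized regression estimator under specification~I.

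I do not expect a genuine obstacle here; the only step requiring care is the last one, since establishing merely that $b^{tyr}_\sI$ achieves a particular variance value does not by itself show that value is minimal within the (smaller) specification-I class — the column-span containment is what makes the conclusion airtight, and it is precisely the fact that specification~I is a linear restriction of specification~II.
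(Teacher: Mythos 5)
Your proposal is correct and follows essentially the same route as the paper, which deduces the corollary from the preceding theorem via the identity $\xx_\sI b^{tyr}_\sI=\xx_\II b^{tyr}_\II$ and the resulting algebraic equivalence of the conjugate ATE estimators. Your third step — observing that the columns of $\xx_\sI$ lie in the column span of $\xx_\II$, so that attaining the specification-II minimum within the specification-I class suffices — makes explicit a step the paper leaves implicit, but it is the same argument.
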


\begin{theorem}
	Under Assumptions 1 and 2, in a completely randomized design with specification \emph{I}, the tyranny of the minority coefficient estimator given in Definition \ref{def.tyr.est} has a conjugate ATE estimator that obtains asymptotic minimum variance within the class of generalized regression estimators. 
\end{theorem}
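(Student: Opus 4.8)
The plan is to follow the template of the proof of Theorem~\ref{b.ols.est.isOpt}. The preceding corollary already shows that the population tyranny-of-the-minority coefficient $b^{tyr}_\sI$ of Definition~\ref{def.tyr} lies in the set of finite-sample-optimal fixed coefficients described by Lemma~\ref{lemma.allbopt}. Hence, by the asymptotic theorem of Section~\ref{section.asymptotic.arguments}, the only thing left to establish is that the estimator $\widehat{b}^{tyr}_\sI$ of Definition~\ref{def.tyr.est} converges in probability to $b^{tyr}_\sI$; Assumption~1 is taken as given, and once $\widehat{b}^{tyr}_\sI \arrowp b^{tyr}_\sI$ (in fact at root-$n$ rate, so Assumption~2 holds with $r = 1/2$), the conclusion follows immediately.

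To identify the probability limit, I would first take expectations of the two random matrices defining $\widehat{b}^{tyr}_\sI$. Writing $\R(\bpi^{-1}-\I_{\scriptscriptstyle 2n}) = \R\bpi^{-1} - \R$ and using $\E[\R\bpi^{-1}] = \I_{\scriptscriptstyle 2n}$ together with $\E[\R] = \bpi$ gives $\E[\R(\bpi^{-1}-\I_{\scriptscriptstyle 2n})] = \I_{\scriptscriptstyle 2n} - \bpi$, so that $\E[{\xx_\sI}'\R(\bpi^{-1}-\I_{\scriptscriptstyle 2n})\xx_\sI] = {\xx_\sI}'(\I_{\scriptscriptstyle 2n}-\bpi)\xx_\sI$ and $\E[{\xx_\sI}'\R(\bpi^{-1}-\I_{\scriptscriptstyle 2n})y] = {\xx_\sI}'(\I_{\scriptscriptstyle 2n}-\bpi)y$. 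In a completely randomized design $\I_{\scriptscriptstyle 2n}-\bpi$ is the diagonal matrix with blocks $\tfrac{n_1}{n}\I_{\scriptscriptstyle n}$ and $\tfrac{n_0}{n}\I_{\scriptscriptstyle n}$, and, since $\x$ is zero-centered, a short block computation analogous to the one in the proof of Theorem~\ref{b.ols.est.isOpt} gives
\begin{align*}
{\xx_\sI}'(\I_{\scriptscriptstyle 2n}-\bpi)\xx_\sI = \text{diag}\!\left(n_1,\, n_0,\, n\,\var(\x)\right), \qquad
{\xx_\sI}'(\I_{\scriptscriptstyle 2n}-\bpi)y = \left[\begin{matrix} n_1\mu_{y_0} \\ n_0\mu_{y_1} \\ n_1\cov(\x,y_0)+n_0\cov(\x,y_1) \end{matrix}\right],
\end{align*}
and the ratio of the second to the first reproduces exactly the $b^{tyr}_\sI$ of Definition~\ref{def.tyr}.

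Next I would argue consistency. Each entry of $n^{-1}{\xx_\sI}'\R(\bpi^{-1}-\I_{\scriptscriptstyle 2n})\xx_\sI$ and of $n^{-1}{\xx_\sI}'\R(\bpi^{-1}-\I_{\scriptscriptstyle 2n})y$ is a fixed linear combination of Horvitz--Thompson-type estimators of finite-population means of products of columns of $[\,y\ \ \xx_\sI\,]$; under Assumption~1 these estimators have variances of order $n^{-1}$, so Chebyshev's inequality gives convergence in probability of each to its (normalized) expectation computed above. Because the limiting denominator $\text{diag}(n_1/n,\, n_0/n,\, \var(\x))$ is invertible under the standard regularity condition that the columns of $\x$ are not asymptotically collinear, the continuous mapping theorem (Slutsky) yields $\widehat{b}^{tyr}_\sI \arrowp b^{tyr}_\sI$. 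Combining this with the preceding corollary (optimality of $b^{tyr}_\sI$), Assumption~1, and the asymptotic theorem of Section~\ref{section.asymptotic.arguments} shows that the conjugate $\widehat{\delta}^{\tr,tyr}_\sI$ has limiting variance $\lim_{n\rightarrow\infty} n^{-1}(y-\xx_\sI b^{tyr}_\sI)'\dmat(y-\xx_\sI b^{tyr}_\sI)$, which by optimality of $b^{tyr}_\sI$ is the smallest such limit over the whole class --- i.e., asymptotic minimum variance among generalized regression estimators.

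The main obstacle is the consistency step for this random-denominator ratio: making rigorous that the entries of the numerator and denominator matrices really are HT-type estimators whose variances vanish (this is where Assumption~1 does the work) and that the limiting denominator stays bounded away from singularity. This is precisely the ``suitable regularity conditions'' point that is also invoked in the proof of Theorem~\ref{b.ols.est.isOpt}, and I would treat it at the same level of rigor; the remaining ingredients --- the expectation computation and the appeal to the earlier optimality and asymptotic-normality results --- are routine.
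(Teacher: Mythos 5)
Your proposal is correct and follows essentially the same route as the paper's proof: take expectations of the numerator ${\xx_\sI}'\R(\bpi^{-1}-\I_{\scriptscriptstyle 2n})y$ and denominator ${\xx_\sI}'\R(\bpi^{-1}-\I_{\scriptscriptstyle 2n})\xx_\sI$, observe that their ratio is exactly $\left({\xx_\sI}'(\I_{\scriptscriptstyle 2n}-\bpi)\xx_\sI\right)^{-1}{\xx_\sI}'(\I_{\scriptscriptstyle 2n}-\bpi)y = b^{tyr}_\sI$, and then appeal to consistency under regularity conditions together with the finite-sample optimality of $b^{tyr}_\sI$ and the asymptotic theorem. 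Your version is in fact slightly more explicit than the paper's (which stops at the expectation identity and invokes ``suitable regularity conditions'' without the Chebyshev/Slutsky details or the block computation), but the underlying argument is the same.
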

\begin{proofatend}
	First,
	\begin{align*}
	\E\left[\xx_\sI' \R \left( \bpi^{-1}-\I_{\scriptscriptstyle 2n} \right) \xx_\sI \right]=& \xx_\sI' \bpi \left( \bpi^{-1}-\I_{\scriptscriptstyle 2n}\right) \xx_\sI
	\\=& \xx_\sI' \left( \I_{\scriptscriptstyle 2n}-\bpi\right) \xx_\sI
	\end{align*}
	and 
	\begin{align*}
	\E\left[\xx_\sI' \R \left( \bpi^{-1}-\I_{\scriptscriptstyle 2n} \right) y \right]=& \xx_\sI' \bpi \left( \bpi^{-1}-\I_{\scriptscriptstyle 2n}\right) y
	\\=& \xx_\sI' \left( \I_{\scriptscriptstyle 2n}-\bpi\right) y
	\end{align*}
	so that 
	\begin{align*}
	\E\left[\xx_\sI' \R \left( \bpi^{-1}-\I_{\scriptscriptstyle 2n} \right) \xx_\sI \right]^{-1}\E\left[\xx_\sI' \R \left( \bpi^{-1}-\I_{\scriptscriptstyle 2n} \right) y \right]= & \left(\xx_\sI' \left( \I_{\scriptscriptstyle 2n}-\bpi\right) \xx_\sI\right)^{-1} \xx_\sI' \left( \I_{\scriptscriptstyle 2n}-\bpi\right) y
	\end{align*}
	which is just the coefficient given in Definition \ref{def.tyr}.  Thus, under suitable regularity conditions $\widehat{b}^{tyr}_\sI \rightarrow b^{tyr}_\sI$ so that its conjugate ATE estimator is asymptotically optimal.
\end{proofatend}
	\begin{proof}
		Provided in Appendix.
	\end{proof}
	

	\subsection{OLS coefficients minimize AS bound for completely randomized designs with specification II}
	
	Given that OLS with specification II (see Section \ref{section.ols.opt}) and the tyranny of the minorty estimator with specification I (see Section \ref{section.tyranny.opt}) can be equally precise, it is unclear which might be preferable.  One way to evaluate this is to see which leads to a tighter variance bound.  In this section, it is shown that in completely randomized designs and specification II, the coefficients that minimize the AS variance bound are given by $b^{ols}_\II$ in Definition \ref{def.ols}.  The result suggests that, when using the AS bound, OLS with specification II will tend to lead to smaller intervals than tyranny of the minority.
	
	\begin{theorem}\label{proof.ols.specII}
	In the completely randomized design with specification \emph{II}, if we let $u=y-\xx_\II b^f_\II$ with $b^f_\II$ being a fixed-coefficient, then a value of $b^f_\II$ that minimizes the bound on the variance, $n^{-2}u' \tilde{\dmat} u$, is \emph{$b^{ols}_\II$} from Definition \ref{def.ols}.
	\end{theorem}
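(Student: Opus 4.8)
The plan is to reduce the statement to a first-order condition via Theorem \ref{UBest}, and then to evaluate that condition explicitly for the completely randomized design by recycling the matrix identities already proved for that design. By Theorem \ref{UBest} (whose proof mirrors that of Theorem \ref{thrm.bopt}), a fixed coefficient $b^f_\II$ minimizes the bound $n^{-2}u'\tilde{\dmat}^\tas u$ if and only if it solves the linear system $(\xx_\II'\tilde{\dmat}^\tas\xx_\II)\,b^f_\II=\xx_\II'\tilde{\dmat}^\tas y$; this is a global minimum because $\tilde{\dmat}^\tas$ is positive semi-definite (it dominates the variance-covariance matrix $\dmat$). So it suffices to verify that $b^{ols}_\II$ of Definition \ref{def.ols} solves this system.

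The first substantive step is to pin down $\tilde{\dmat}^\tas$ for complete randomization. Inspecting the four partitions of $\dmat$, the entry $-1$ occurs exactly on the diagonals of $\dmat_{10}$ and $\dmat_{01}$ (the $2y_{1i}y_{0i}$ terms), so $\bfI(\dmat=-1)$ is the block matrix with $\I_n$ in the off-diagonal blocks and zeros on the diagonal blocks, whence $\mathrm{diag}(\bfI(\dmat=-1)1_{\scriptscriptstyle 2n})=\I_{\scriptscriptstyle 2n}$ and therefore $\tilde{\dmat}^\tas_{jk}=\dmat_{jk}+\I_n$ for every block. Next I would compute the action of $\tilde{\dmat}^\tas_{jk}$ on $\tx=[1_n\ \ \x]$: Corollary \ref{corollaryDeMean} gives $\tilde{\dmat}^\tas_{jk}1_n=1_n$, and Lemma \ref{LemmaDeMean} together with the fact that $\x$ is zero-centered (so $\tx-1_n\mu_{\tx}=[\,0_n\ \ \x\,]$) gives $\tilde{\dmat}^\tas_{jk}\x=a_{jk}\x$ with $a_{11}=\tfrac{nn_0}{(n-1)n_1}+1$, $a_{00}=\tfrac{nn_1}{(n-1)n_0}+1$, and $a_{10}=a_{01}=-\tfrac{1}{n-1}$. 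Hence $\tilde{\dmat}^\tas_{jk}\tx=[\,1_n\ \ a_{jk}\x\,]$, and likewise Lemma \ref{LemmaCovar} plus the numerical identity $c_{jk}+n=n\,a_{jk}$ gives $\tilde{\dmat}^\tas_{jk}y_l$ in the analogous form involving $\mu_{y_l}$ and $a_{jk}\cov(\x,y_l)$.

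With these in hand I would assemble the two sides of the first-order condition. Writing $\xx_\II=\bigl[\begin{smallmatrix}-\tx&0\\0&\tx\end{smallmatrix}\bigr]$ and using $1_n'\x=0$, $\x'\x=n\var(\x)$, $1_n'y_l=n\mu_{y_l}$, $\x'y_l=n\cov(\x,y_l)$, the matrix $\xx_\II'\tilde{\dmat}^\tas\xx_\II$ collapses to a block form whose scalar (intercept) entries are $\pm n$ and whose $k\times k$ (slope) blocks are proportional to $\var(\x)$, while $\xx_\II'\tilde{\dmat}^\tas y$ becomes the stacked vector with intercept entries $\pm n(\mu_{y_0}-\mu_{y_1})$ and slope entries built from $a_{jk}\cov(\x,y_l)$. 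Multiplying the first matrix by $b^{ols}_\II=[\mu_{y_0},\ \var(\x)^{-1}\cov(\x,y_0),\ \mu_{y_1},\ \var(\x)^{-1}\cov(\x,y_1)]'$ then cancels the $\var(\x)$ factors row by row and reproduces $\xx_\II'\tilde{\dmat}^\tas y$ exactly, so $b^{ols}_\II$ solves the first-order condition and therefore minimizes the AS bound, as claimed.

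The main obstacle is the middle step: correctly identifying $\bfI(\dmat=-1)$ for the completely randomized design (which is what yields the clean form $\tilde{\dmat}^\tas_{jk}=\dmat_{jk}+\I_n$) and then tracking how the added $\I_n$ interacts with the de-meaning behavior of $\dmat_{jk}$ from Lemma \ref{LemmaDeMean} — in particular checking the identity $c_{jk}+n=n\,a_{jk}$ that makes the slope entries on the two sides of the first-order condition match. Everything after that is routine block-matrix multiplication.
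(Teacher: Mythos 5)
Your proposal is correct and its skeleton matches the paper's: both reduce the problem to the normal equations $(\xx_\II'\tilde{\dmat}^\tas\xx_\II)\,b=\xx_\II'\tilde{\dmat}^\tas y$, both identify $\tilde{\dmat}^\tas-\dmat$ for complete randomization as the block matrix with $\I_{\scriptscriptstyle n}$ in every $n\times n$ block, and both lean on Lemmas \ref{LemmaDeMean} and \ref{LemmaCovar} to reduce the blocks to multiples of $\var(\x)$ and $\cov(\x,y_0)$, $\cov(\x,y_1)$. The difference is in the last step. The paper \emph{solves} the system: it reorders the columns into an equivalent specification $\xx_\II^*$, applies Rohde's formula (\ref{GenInv}) for the generalized inverse of the partitioned matrix, carries out a Kronecker-product computation to extract the slope coefficients $\var(\x)^{(-)}\cov(\x,y_l)$, and then---because the $2\times2$ intercept block is singular---appeals to the arbitrariness of the generalized inverse to argue that $(\mu_{y_0},\mu_{y_1})$ is among the admissible intercepts. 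You instead \emph{verify} that the candidate $b^{ols}_\II$ satisfies the first-order condition, which needs only the action of $\tilde{\dmat}^\tas_{jk}$ on $1_{\scriptscriptstyle n}$ and on the zero-centered $\x$ (your constants $a_{jk}$ and the identity $c_{jk}+n=na_{jk}$ check out) followed by routine block multiplication. Since the theorem claims only that $b^{ols}_\II$ is \emph{a} minimizer, and the objective is a convex quadratic (as $\tilde{\dmat}^\tas\succeq\dmat\succeq 0$), exhibiting it as one solution of the normal equations suffices; your route is shorter and sidesteps both Rohde's formula and the singular-intercept-block delicacy. What the paper's longer computation buys in exchange is a description of the full solution set, which makes visible that the slope coefficients are pinned down while the intercepts are not.
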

	\begin{proofatend} First, let $\xx_{\II}^*$ be an equivalent specification to specification II defined as 
	\begin{align*}
	\xx_{\II}^*= & \left[
	\begin{matrix}
	-1_{\scriptscriptstyle n} & 0_{\scriptscriptstyle n} & -\x & 0_{(\scriptscriptstyle n \times {\scriptscriptstyle k} )}
	\\0_{\scriptscriptstyle n}& 1_{\scriptscriptstyle n}&  0_{(\scriptscriptstyle n \times {\scriptscriptstyle k})} & \x 
	\end{matrix} \right].
	\end{align*}
	Then,
	\begin{align*}
	{\xx_{\II}^*}' \tilde{\dmat} \xx_{\II}^* = & {\xx_{\II}^*}' {\dmat} \xx_{\II}^* + {\xx_{\II}^*}' \left[ \begin{matrix}
	\I & \I \\ \I & \I 
	\end{matrix}\right]\xx_{\II}^*
	\\ = & \left[ \begin{matrix}
	n & -n & 0 & 0 
	\\ -n & n & 0 & 0
	\\ 0 & 0 & \x' \tilde{\dmat}_{00} \x & -\x' \tilde{\dmat}_{01} \x
	\\ 0 & 0 & -\x' \tilde{\dmat}_{10} \x & \x' \tilde{\dmat}_{11} \x
	\end{matrix} \right]
	\end{align*}
	Now recall that $\x$ is zero-centered and using Lemma \ref{LemmaCovar} we have for a completely randomized design
		\begin{align*}
			\x' \tilde{\dmat}_{00} \x =& \x' {\dmat}_{00} \x + \x'\x
			\\ =& \frac{n^2 n_1}{ (n-1)n_0}\var(\x)+n \var(\x)
			\\ = & c_a \var(\x)
		\end{align*}
	where $c_a :=\frac{n^2 n_1+ n(n-1)n_0}{ (n-1)n_0}$.  Likewise,
		\begin{align*}
		\x' \tilde{\dmat}_{11} \x = & c_b \var(\x),
		\\ -\x' \tilde{\dmat}_{01} \x = & c_c \var(\x),
		\\ \text{and} \hspace{2mm} 	-\x' \tilde{\dmat}_{10} \x = & c_c \var(\x)
		\end{align*}
	with $c_b := \frac{n^2 n_0+ n(n-1)n_1}{ (n-1)n_1} $ and $c_c := \frac{-n^2+n-1}{(n-1)}$. Next, letting $c_q := c_b -c_c^2 c_a^{-1}$ and given that a generalized inverse of a partitioned matrix is given in (\ref{GenInv}),
	\begin{align*}
	\left({\xx^*_{\II}}' \tilde{\dmat} \xx^*_\II \right)^{(g)} = \text{Bdiag}\left(
	\left[\begin{matrix}
	n & -n \\ -n & n
	\end{matrix}\right]^{(-)}, \left[\begin{matrix}
	c_a^{-1}+c_a^{-2}c_c^2 c_q^{-1} & -c_a^{-1} c_c c_q^{-1} \\ -c_a^{-1} c_c c_q^{-1} & c_q^{-1}
	\end{matrix}\right] \otimes \var(\x)^{(-)} \right)
	\end{align*}
	where $\text{Bdiag}\left( \mathbf{a}, \mathbf{b}\right)$ makes a block diagonal matrix out of matrices $\mathbf{a}$ and $\mathbf{b}$ and $\otimes$ is the Kronecker product. 
	Similarly, 
	\begin{align*}
	\x' \tilde{\dmat}_{00} y_0 =& c_a \cov(\x, y_0)
	\\ \x' \tilde{\dmat}_{11} {y_1} = & c_b \cov(\x, y_1),
	\\ -\x' \tilde{\dmat}_{01} y_1 = & c_c \cov(\x, y_1),
	\\ \text{and} \hspace{2mm} -\x' \tilde{\dmat}_{10} y_0 = & c_c \cov(\x, y_0),
	\end{align*}
	so that
	\begin{align*}
	{\xx_\II}' \tilde{\dmat} y= \left[\begin{matrix}
	-1'_{\scriptscriptstyle 2n}y \\  1'_{\scriptscriptstyle 2n}y \\
	\left[\begin{matrix}
	 c_a \\ c_c \end{matrix} \right] \otimes \cov(\x, y_0) +\left[\begin{matrix}
	 c_c \\ c_b \end{matrix} \right] \otimes \cov(\x, y_1)
	 \end{matrix} \right].
	\end{align*}
	Therefore,
	\begin{align*}
	\left({\xx^*_\II}' \tilde{\dmat} \xx^*_\II \right)^{(g)} {\xx^*_\II}' \tilde{\dmat} y =& \left[ \begin{matrix} \left[\begin{matrix}
	n & -n \\ -n & n
	\end{matrix}\right]^{(-)} \left[\begin{matrix}
	-1'_{\scriptscriptstyle 2n}y \\  1'_{\scriptscriptstyle 2n}y \end{matrix} \right] 
	\\ \left( \left[\begin{matrix}
	c_a^{-1}+c_a^{-2}c_c^2 c_q^{-1} & -c_a^{-1} c_c c_q^{-1} \\ -c_a^{-1} c_c c_q^{-1} & c_q^{-1}
	\end{matrix}\right] \otimes \var(\x)^{(-)}\right)\left( \left[\begin{matrix}
	c_a \\ c_c \end{matrix} \right] \otimes \cov(\x, y_0)
	+\left[\begin{matrix}
	c_c \\ c_b \end{matrix} \right] \otimes \cov(\x, y_1)\right)
	\end{matrix} \right].
	\end{align*}
	Focusing on the last $2k$ coefficients we have,
	\begin{align*}
	\left( \left[\begin{matrix}
	c_a^{-1}+c_a^{-2}c_c^2 c_q^{-1} & -c_a^{-1} c_c c_q^{-1} \\ -c_a^{-1} c_c c_q^{-1} & c_q^{-1}
	\end{matrix}\right] \otimes \var(\x)^{(g)}\right)&\left( \left[\begin{matrix}
	c_a \\ c_c \end{matrix} \right] \otimes \cov(\x,y_0)
	+\left[\begin{matrix}
	c_c \\ c_b \end{matrix} \right] \otimes \cov(\x,y_1)\right) 
	\\ =& \left( \left[\begin{matrix}
	c_a^{-1}+c_a^{-2}c_c^2 c_q^{-1} & -c_a^{-1} c_c c_q^{-1} \\ -c_a^{-1} c_c c_q^{-1} & c_q^{-1}
	\end{matrix}\right]\left[\begin{matrix}
	c_a \\ c_c \end{matrix} \right] \right)\otimes \var(\x)^{(-)} \cov(\x,y_0)
	\\&+\left( \left[\begin{matrix}
	c_a^{-1}+c_a^{-2}c_c^2 c_q^{-1} & -c_a^{-1} c_c c_q^{-1} \\ -c_a^{-1} c_c c_q^{-1} & c_q^{-1}
	\end{matrix}\right]\left[\begin{matrix}
	c_c \\ c_b \end{matrix} \right] \right)\otimes \var(\x)^{(-)} \cov(\x,y_1)
	\\ =& \left[\begin{matrix}
	1 \\ 0 \end{matrix} \right] \otimes \var(\x)^{(-)} \cov(\x,y_0) 
	+\left[\begin{matrix}
	0\\ 1 \end{matrix} \right] \otimes \var(\x)^{(-)} \cov(\x,y_1) 
	\\ =& \left[\begin{matrix}
	\var(\x)^{(-)} \cov(\x,y_0) \\ \var(\x)^{(-)} \cov(\x,y_1) \end{matrix} \right].
	\end{align*}
	The first equality follows from the mixed-product property of Kronecker products. The following line applies algebra and the definition of $c_q$.  As long as there is no perfect collinearity in $\x$, $\var(\x)^{(-)}$ represents the usual inverse matrix.  
	The intercept coefficients are
	\begin{align*}
	\left[\begin{matrix}
	n & -n \\ -n & n
	\end{matrix}\right]^{(-)} \left[\begin{matrix}
	-1'_{\scriptscriptstyle 2n}y \\  1'_{\scriptscriptstyle 2n}y \end{matrix} \right]=\frac{1}{2} \left[\begin{matrix}
	-\delta \\  \delta
	\end{matrix}\right], 
	\end{align*}
	but recognizing that the choice of generalized inverse was arbitrary, it can be seen that the full range of optimal intercepts includes 
	\begin{align*}
	\left[\begin{matrix}
	\mu_{y_0} \\ \mu_{y_1} 
	\end{matrix}\right].
	\end{align*}
\end{proofatend}
\begin{proof}
	Provided in Appendix.
\end{proof}

\subsection{{\rara} is algebraically equivalent to OLS for completely randomized designs with specification II}

It has been shown that OLS is asymptotically optimal in completely randomized experiments. In this section, it is demonstrated that the two-step optimal estimator, $\widehat{\delta}^{\tr,\trara}$, is algebraically equivalent to the OLS estimator, $\widehat{\delta}^{\tr,ols}$.  
 
\begin{theorem}
	The vector of residuals, $\R_1 \bpi^{-1}\widehat{u}_1$, is orthogonal to the weights $\dmat_{11} \tx (\tx'\dmat_{11} \tx)^{(g)}$.
\end{theorem}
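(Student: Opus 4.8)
The plan is to reduce the claimed orthogonality to the normal equations of the $\bpi_1^{-1}$-weighted least squares fit, after using Lemma~\ref{LemmaDeMean} to trade $\dmat_{11}$ for a scalar multiple of the de-meaning operator. Throughout, $\widehat{u}_1 := y_1 - \tx\widehat{b}_1^{\pi wls}$ is the treatment-arm residual, where $\widehat{b}_1^{\pi wls} = (\tx'\R_1\bpi_1^{-1}\tx)^{-1}\tx'\R_1\bpi_1^{-1}y_1$ is the treatment-arm subvector of $\widehat{b}^{\pi wls}$ and $\bpi_1^{-1}$ is the treatment-arm block of $\bpi^{-1}$. Saying that $\R_1\bpi_1^{-1}\widehat{u}_1$ is orthogonal to the columns of the weight matrix $\dmat_{11}\tx(\tx'\dmat_{11}\tx)^{(g)}$ amounts to the identity $(\tx'\dmat_{11}\tx)^{(g)}\tx'\dmat_{11}\R_1\bpi_1^{-1}\widehat{u}_1 = 0$, so it suffices to establish that $\tx'\dmat_{11}\R_1\bpi_1^{-1}\widehat{u}_1 = 0$; once this factor vanishes, premultiplication by $(\tx'\dmat_{11}\tx)^{(g)}$ finishes the argument and the choice of generalized inverse becomes irrelevant.

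First I would record the two ingredients. (i) Since $\widehat{b}_1^{\pi wls}$ is the $\bpi_1^{-1}$-weighted least squares fit of $y_1$ on $\tx$, its normal equations give $\tx'\R_1\bpi_1^{-1}\widehat{u}_1 = 0$; and because $\tx = [\,1_{\scriptscriptstyle n}\ \ \x\,]$ carries a leading constant column, the first coordinate of this vector identity is $1_{\scriptscriptstyle n}'\R_1\bpi_1^{-1}\widehat{u}_1 = 0$ (neither statement uses anything about the design). (ii) By Lemma~\ref{LemmaDeMean}, in a completely randomized design $\dmat_{11}$ is symmetric with $\dmat_{11}\tx = \frac{n n_0}{(n-1)n_1}(\tx - 1_{\scriptscriptstyle n}\mu_{\tx})$, hence $\tx'\dmat_{11} = \frac{n n_0}{(n-1)n_1}(\tx - 1_{\scriptscriptstyle n}\mu_{\tx})'$. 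Substituting (ii) and then applying (i) termwise,
\[
\tx'\dmat_{11}\R_1\bpi_1^{-1}\widehat{u}_1 = \frac{n n_0}{(n-1)n_1}\Big(\tx'\R_1\bpi_1^{-1}\widehat{u}_1 - \mu_{\tx}'\,1_{\scriptscriptstyle n}'\R_1\bpi_1^{-1}\widehat{u}_1\Big) = 0,
\]
which is the required identity.

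The only delicate point — the ``main obstacle,'' such as it is — is the bookkeeping: one must be explicit that $\widehat{u}_1$ is the $\bpi_1^{-1}$-weighted-least-squares residual so that the \emph{entire} normal-equation vector, and in particular its intercept coordinate $1_{\scriptscriptstyle n}'\R_1\bpi_1^{-1}\widehat{u}_1 = 0$, is available; and one must note that the generalized inverse in the weight matrix is harmless precisely because the factor $\tx'\dmat_{11}\R_1\bpi_1^{-1}\widehat{u}_1$ is already exactly zero rather than merely annihilated by $(\tx'\dmat_{11}\tx)^{(g)}$. Everything else is the substitution from Lemma~\ref{LemmaDeMean} together with linearity.
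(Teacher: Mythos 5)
Your proposal is correct and follows essentially the same route as the paper: apply Lemma \ref{LemmaDeMean} to turn $\tx'\dmat_{11}$ into a constant multiple of the de-meaning operator, then invoke the normal equations (orthogonality of the observed covariate columns, including the intercept, to the residuals) to conclude the product vanishes. Your version is slightly more careful in keeping the $\bpi^{-1}$ weights explicit in the normal equations rather than appealing directly to OLS, but the two arguments are the same in substance.
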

\begin{proof}
From the lemmas above we have
\begin{align*}
\tx'\dmat_{11} \R_1 \bpi^{-1} \widehat{u}_1= &c\sum_i (\tx_i-\mu_x)\widehat{u}_{1i} R_i
\\ =&c \times 0
\end{align*}
where $c=\frac{n}{n_1}\frac{n^2}{n_1}\left(1-\frac{n_1-1}{n-1}\right)$ (with the first $\frac{n}{n_1}$ coming from $\bpi^{-1}$). The last line follows from the fact that we know that for OLS that the column space of (the observed) $\tx$'s is orthogonal to the residuals.
\end{proof}
The result shows that the two-step optimal will not make any adjustment to the OLS estimates in the completely randomized case.  The estimators are algebraically equivalent. 

\section{Optimal Regression for Cluster-Randomization}\label{section.cluster}

This section reports on results for experiments with complete randomization of clusters. As above, it is assumed that we have an identified design and that every arm has at least two units of randomization (clusters) assigned to it.

It is also assumed that there is no second-stage selection from within clusters, which is to say that covariates and outcomes are available for all cluster members. Extensions that account for second-stage sampling are possible but beyond the scope of the paper.
	
When analyzing cluster randomized experiments, one approach to estimating the ATE is to regress the individual-level data on the treatment indicator and covariates using OLS, but it is not asymptotically optimal. By contrast, as the next subsections will show, regression using cluster totals is asymptotically optimal.\footnote{It should also be noted that an alternative approach is to first take cluster averages before running regression. This approach is biased and not generally consistent for the ATE \citep{middleton08}. However, if one were content to estimate the average of cluster-level average effects, this approach may be acceptable. There are benefits to doing this. For example, results from Section \ref{section.completelyrand} can be applied directly.  Moreover, compared to analyzing cluster totals, high leverage observations, which can foul normal-theory inference, are less likely.  Moreover, in the presence of treatment effects, summing to create cluster-level totals is likely to induce a correlation between the leverage of an observation and its treatment effect (in this case the sum of treatment effects for units in the cluster). The first-order term in regression's bias is the correlation between leverage and treatment effect \citep{lin, freedman08a, freedman08b}.}

\subsection{OLS with cluster totals is optimal for cluster-randomized designs with specification II}

In this subsection, it will be shown that regression with cluster totals is asymptotically optimal for cluster randomized experiments using specification II. 

First, let $m$, $m_0$, and $m_1$ be the number of clusters, the number of clusters in treatment and the number of clusters in control, respectively. Meanwhile, let $c_i$ give the cluster id number for the cluster that includes unit $i$, and let $\tx^c_{\scriptscriptstyle n}$ represent an $n \times (k+1)$ matrix of cluster totals, i.e., with $i^{th}$ row giving the sum of rows in $\x$ associated with units in cluster $c_i$. By contrast, let $\tx^c$ represent an $m \times (k+1)$ matrix of cluster totals, with $g^{th}$ row giving the sum of rows of $\tx$ associated with units in cluster $g$.

\begin{definition}[OLS with cluster totals]\label{def.ols.w.totals} The "OLS with cluster totals" coefficient for specification \emph{II} is 
	\emph{\begin{align*}
		b^{ols,c}_\II = \left[\begin{matrix} 
		\var({\tx^c})^{(-)} \cov({\tx^c}, y^c_0) \\ \var(	{\tx^c})^{(-)} \cov({\tx^c}, y^c_1)
		\end{matrix} \right]
		\end{align*}}where ${\tx^c}$ is the $m \times (k+1)$ matrix with row $g$ including cluster totals for the $g^{th}$ cluster. Likewise, $y^c_0$ and $y^c_1$ are length $m$ with entry $g$ representing cluster totals for the $g^{th}$ cluster's $y_{0i}$ and $y_{1i}$ values, respectively.
\end{definition}

Next, to define the corresponding coefficient estimator, first let specification II$^c$ be as follows

\begin{align*}
\xx^c_{\II}= \left[\begin{matrix}
-1_{\scriptscriptstyle m} &   0_{\scriptscriptstyle m} & -\tx^c & 0_{\scriptscriptstyle m \times (k+1)}
\\ 0_{\scriptscriptstyle m}  & 1_{\scriptscriptstyle m} & 0_{\scriptscriptstyle m \times (k+1)} & \tx^c 
\end{matrix}\right].
\end{align*}
Note $\xx^c_{\II}$ has $2k+4$ columns where $\xx_{\II}$ has only $2k+2$. 

\begin{definition}[OLS with cluster totals coefficient estimator]\label{def.ols.w.totals.est}
	The ``OLS with cluster totals coefficient estimator" for specification \emph{II} is
	\emph{\begin{align*}
		\left[ \begin{matrix}
		\widehat{a}_0 \\ \widehat{a}_1 \\ \widehat{b}^{ols,c}_{\II}
		\end{matrix}
		\right] = \left({\xx^c_{\II}}' \R^c  {\xx^c_{\II}} \right)^{-1} {\xx^c_{\II}}' \R^c y^c
		\end{align*}}
	where $\widehat{a}_0$ and $\widehat{a}_1$ are scalars and \emph{$\widehat{b}^{ols,c}_{\II}$} has length $2k+2$ and $\R^c$ (an analog to $\R$) is a $2m\times 2m$ diagonal matrix with cluster-level assignment indicators on the diagonal.
\end{definition}

The two lemmas that follow will lead into the final result of the section. Lemma \ref{lemma.clustertotals} shows that, for cluster-randomized experiments, multiplying $\dmat_{11}$, $\dmat_{00}$, $\dmat_{01}$, or $\dmat_{10}$ by a length-$n$ column vector returns a length-$n$ vector of cluster totals, zero-centered and multiplied by a constant. Lemma \ref{lemma.clustertotalsII} will show that matrices such as $\tx'\dmat_{11}\tx$ and $\tx'\dmat_{11}{y_{1}}$ represent finite-population covariance matrices for cluster totals rescaled by constants.

\begin{lemma}\label{lemma.clustertotals}
	In a cluster randomized experiment, $\dmat_{11}\tx=\frac{m m_0}{(m-1)m_1}(\tx^c_{\scriptscriptstyle n}- \frac{n}{m}1_{\scriptscriptstyle n} \mu_{\tx})$ where $\frac{n}{m}1_{\scriptscriptstyle n} \mu_{\tx}$ is a matrix that subtracts off the average cluster totals.  
	Likewise, in a cluster-randomized experiment, $\dmat_{00}\tx=\frac{m m_1}{(m-1)m_0}(\tx^c_{\scriptscriptstyle n}-\frac{n}{m}1_{\scriptscriptstyle n} \mu_{\tx})$.  And $\dmat_{10}\tx=\dmat_{01}\tx=-\frac{m}{m-1}(\tx^c_{\scriptscriptstyle n}-\frac{n}{m}1_{\scriptscriptstyle n} \mu_{\tx})$. 
\end{lemma}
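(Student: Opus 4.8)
The plan is to follow the template of the proof of Lemma \ref{LemmaDeMean}, computing each $n \times n$ block of $\dmat$ entrywise from the cluster-level assignment probabilities, recognizing that each block is a fixed scalar multiple of a single structured matrix, and then reading off its action on $\tx$. Concretely, let $\mathbf{A}$ be the $n\times n$ co-membership matrix with $A_{ij}=1$ when $c_i=c_j$ (so $A_{ii}=1$) and $A_{ij}=0$ otherwise. Two identities carry the whole argument: $\mathbf{A}\tx=\tx^c_{\scriptscriptstyle n}$, since row $i$ of $\mathbf{A}\tx$ sums the rows of $\tx$ over the cluster containing $i$, which is the definition of $\tx^c_{\scriptscriptstyle n}$; and $1_{\scriptscriptstyle n}1_{\scriptscriptstyle n}'\tx=n\,1_{\scriptscriptstyle n}\mu_{\tx}$, since $1_{\scriptscriptstyle n}'\tx$ is the row vector of column sums of $\tx$.

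Next I would compute the blocks. Under complete randomization of $m$ clusters with $m_1$ assigned to treatment, every unit has $\pi_{1i}=m_1/m$; for $i,j$ in the same cluster $\pi_{1i1j}=m_1/m$, and for $i,j$ in different clusters $\pi_{1i1j}=\frac{m_1}{m}\frac{m_1-1}{m-1}$. Plugging these into the $ij$ entry $\frac{\pi_{1i1j}-\pi_{1i}\pi_{1j}}{\pi_{1i}\pi_{1j}}$ of $\dmat_{11}$ gives $m_0/m_1$ for same-cluster pairs and $-\frac{m_0}{m_1(m-1)}$ for different-cluster pairs, so that
\begin{align*}
\dmat_{11}=\frac{m_0}{m_1(m-1)}\bigl(m\mathbf{A}-1_{\scriptscriptstyle n}1_{\scriptscriptstyle n}'\bigr)=\frac{m\,m_0}{m_1(m-1)}\Bigl(\mathbf{A}-\tfrac1m\,1_{\scriptscriptstyle n}1_{\scriptscriptstyle n}'\Bigr),
\end{align*}
and hence $\dmat_{11}\tx=\frac{m\,m_0}{(m-1)m_1}\bigl(\tx^c_{\scriptscriptstyle n}-\tfrac{n}{m}1_{\scriptscriptstyle n}\mu_{\tx}\bigr)$, the claimed formula. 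Repeating the same computation with the roles of $m_0$ and $m_1$ exchanged (using $\pi_{0i}=m_0/m$ and the corresponding within/between joint probabilities) yields the $\dmat_{00}$ statement. For $\dmat_{10}$ (and identically for $\dmat_{01}$), note that two units in the same cluster can never be split across arms, so $\pi_{1i0j}=0$ when $c_i=c_j$, while $\pi_{1i0j}=\frac{m_1}{m}\frac{m_0}{m-1}$ when $c_i\neq c_j$; the $ij$ entry $\frac{\pi_{1i0j}-\pi_{1i}\pi_{0j}}{\pi_{1i}\pi_{0j}}$ is then $-1$ within clusters and $\frac{1}{m-1}$ between, so $\dmat_{10}=-\frac{1}{m-1}\bigl(m\mathbf{A}-1_{\scriptscriptstyle n}1_{\scriptscriptstyle n}'\bigr)$ and $\dmat_{10}\tx=-\frac{m}{m-1}\bigl(\tx^c_{\scriptscriptstyle n}-\tfrac{n}{m}1_{\scriptscriptstyle n}\mu_{\tx}\bigr)$.

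The only thing requiring care — and the place an arithmetic slip is most likely — is the bookkeeping of the two levels: the blocks $\dmat_{\cdot\cdot}$ and the covariate matrix $\tx$ are indexed by the $n$ units, but the probabilities live at the level of the $m$ clusters, so the entrywise arithmetic mixes $m,m_0,m_1$ with the $n$-versus-$m$ rescaling that surfaces when $1_{\scriptscriptstyle n}1_{\scriptscriptstyle n}'\tx$ is rewritten through $\mu_{\tx}$. The standing assumptions (identified design, $m_0,m_1\ge 2$) guarantee $m-1\ge 1$ and $0<\pi_{1i}<1$, so no denominator vanishes. There is no deeper obstacle; the result is precisely the cluster-level analogue of Lemma \ref{LemmaDeMean}, to which it reduces when every cluster is a singleton.
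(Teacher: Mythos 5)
Your proof is correct and follows essentially the same route as the paper's: compute the entries of each block of $\dmat$ from the cluster-level joint assignment probabilities, observe that each block is a constant multiple of a matrix whose action on $\tx$ is ``sum within clusters, then subtract the average cluster total,'' and read off the result. The only difference is cosmetic --- you make the co-membership matrix $\mathbf{A}$ explicit where the paper describes the same rescaled matrix $\dmat^*_{11}$ verbally --- and your entrywise values and constants all check out.
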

\begin{proofatend}
	By the definition of $\dmat_{11}$ above, in a cluster randomized designs the $ij$ element of $\dmat_{11}$ when units $i$ and $j$ are in the same cluster is
	\begin{align*}
	\frac{\pi_{1i1j}-\pi_{1i}\pi_{1j}}{\pi_{1i}\pi_{1j}}=&\frac{\frac{m_1}{m}-\frac{m_1}{m}\frac{m_1}{m}}{\frac{m_1}{m}\frac{m_1}{m}}
	\\=&\frac{m-m_1}{m_1}
	\\ =&\frac{m_0}{m_1}
	\end{align*}
	and for $i,j$ not in the same cluster
	\begin{align*}
	\frac{\pi_{1i1j}-\pi_{1i}\pi_{1j}}{\pi_{1i}\pi_{1j}}=&\frac{\frac{m_1}{m}\frac{m_1-1}{m-1}-\frac{m_1}{m}\frac{m_1}{m}}{\frac{m_1}{m}\frac{m_1}{m}}
	\\=&-\frac{1}{m-1}\frac{m_0}{m_1}.
	\end{align*}
	Now define
	\begin{align*}
	\dmat^*_{11}=\frac{m_1(m-1)}{m_0 m}\dmat_{11}
	\end{align*}
	then $\dmat^*_{11}$ has $i,j$ element equal to $\frac{m-1}{m}$ if $i$ and $j$ are in the same cluster and equal to $-\frac{1}{m}$ otherwise. So, $\dmat^*_{11}\tx$ returns a length $n$ vector $(\tx^c_{\scriptscriptstyle n}- \frac{n}{m}1_{\scriptscriptstyle n} \mu_{\tx})$ with the $i^{th}$ row of $\tx^c_n$ equal to the sums of $x$'s for cluster $c_i$ and with $\frac{n}{m}1_{\scriptscriptstyle n} \mu_{\tx}$ doing the work of subtracting off the average of cluster totals. Therefore, $\dmat_{11}\tx=\frac{m m_0}{(m-1)m_1}(\tx^c_{\scriptscriptstyle n}- \frac{n}{m}1_{\scriptscriptstyle n} \mu_{\tx})$. The proofs for $\dmat_{00}\tx$, $\dmat_{01}\tx$ and $\dmat_{01}\tx$ are analogous.
\end{proofatend}
\begin{proof}
	Provided in Appendix.
\end{proof}

\begin{lemma}\label{lemma.clustertotalsII}
	In a cluster randomized experiment, $\tx' \dmat_{11}\tx=\frac{m^2m_0}{(m-1)m_1}\var(\tx^c )$.
	Likewise, the $\tx' \dmat_{11}y_1=\frac{m^2m_0}{(m-1)m_1}\cov(\tx^c, y^c_1)$ where $y^c_1$ is an length-$m$ vector with the $g^{th}$ element representing cluster totals for the $g^{th}$ cluster's $y_{1i}$ values.  
\end{lemma}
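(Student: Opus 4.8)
The plan is to obtain this as the exact cluster analog of how Lemma~\ref{LemmaCovar} follows from Lemma~\ref{LemmaDeMean} in the completely randomized case, with the role of ``de-meaning $\tx$'' played by ``de-meaning the cluster totals.'' I would start from Lemma~\ref{lemma.clustertotals}, which gives $\dmat_{11}\tx = \frac{m m_0}{(m-1)m_1}\bigl(\tx^c_{\scriptscriptstyle n} - \frac{n}{m} 1_{\scriptscriptstyle n}\mu_{\tx}\bigr)$, and note first that $\frac{n}{m}\mu_{\tx}$ is precisely $\mu_{\tx^c}$, the mean of the $m$ cluster totals, since $\sum_g \tx^c_g = \sum_i \tx_i$ implies $\mu_{\tx^c} = \frac1m\sum_g\tx^c_g = \frac{n}{m}\mu_{\tx}$. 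Thus the parenthesized factor is the matrix of cluster totals, repeated across the $n$ rows, with the cluster-total mean subtracted off. Using symmetry of $\dmat_{11}$, we then have $\tx'\dmat_{11}\tx = \frac{m m_0}{(m-1)m_1}\,\tx'\bigl(\tx^c_{\scriptscriptstyle n} - 1_{\scriptscriptstyle n}\mu_{\tx^c}\bigr)$.

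The one substantive step is the combinatorial identity $\tx'\tx^c_{\scriptscriptstyle n} = (\tx^c)'\tx^c$, which I would prove entrywise: the $(a,b)$ entry of the left-hand side is $\sum_i \tx_{ia}\,(\tx^c_{\scriptscriptstyle n})_{ib} = \sum_i \tx_{ia}\sum_{j:\,c_j=c_i}\tx_{jb} = \sum_i\sum_{j:\,c_j=c_i}\tx_{ia}\tx_{jb}$, and reorganizing this double sum by cluster $g$ gives $\sum_g\bigl(\sum_{i:\,c_i=g}\tx_{ia}\bigr)\bigl(\sum_{j:\,c_j=g}\tx_{jb}\bigr) = \sum_g (\tx^c_g)_a(\tx^c_g)_b$, the $(a,b)$ entry of $(\tx^c)'\tx^c$. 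Combining this with $\tx'1_{\scriptscriptstyle n} = n\mu_{\tx}' = m\mu_{\tx^c}'$ yields $\tx'\bigl(\tx^c_{\scriptscriptstyle n} - 1_{\scriptscriptstyle n}\mu_{\tx^c}\bigr) = (\tx^c)'\tx^c - m\,\mu_{\tx^c}'\mu_{\tx^c}$, and the standard finite-population identity $\sum_g (\tx^c_g)'\tx^c_g - m\,\mu_{\tx^c}'\mu_{\tx^c} = \sum_g (\tx^c_g - \mu_{\tx^c})'(\tx^c_g - \mu_{\tx^c}) = m\var(\tx^c)$ finishes the first claim after multiplying through by $\frac{m m_0}{(m-1)m_1}$.

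For the second claim I would run the identical argument with $y_1$ in place of the right-hand $\tx$: $\tx'\dmat_{11}y_1 = \frac{m m_0}{(m-1)m_1}\,(\tx^c_{\scriptscriptstyle n} - 1_{\scriptscriptstyle n}\mu_{\tx^c})'y_1$; the same cluster reindexing gives $(\tx^c_{\scriptscriptstyle n})'y_1 = (\tx^c)'y^c_1$, the centering term collapses to $m\,\mu_{\tx^c}'\mu_{y^c_1}$ using $\frac{n}{m}\mu_{y_1} = \mu_{y^c_1}$, and the covariance version of the finite-population identity gives $(\tx^c)'y^c_1 - m\,\mu_{\tx^c}'\mu_{y^c_1} = m\cov(\tx^c, y^c_1)$, whence $\tx'\dmat_{11}y_1 = \frac{m^2 m_0}{(m-1)m_1}\cov(\tx^c, y^c_1)$.

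I do not expect a real obstacle here: everything reduces to the bookkeeping of the cluster reindexing and the two mean relations $\mu_{\tx^c} = \frac{n}{m}\mu_{\tx}$ and $\mu_{y^c_1} = \frac{n}{m}\mu_{y_1}$. The only point worth flagging is that $\tx = [1_{\scriptscriptstyle n}\ \x]$ carries the leading column of ones, so the first column of $\tx^c$ records cluster \emph{sizes} rather than ones; this is harmless (it simply means the first row and column of $\var(\tx^c)$ need not vanish) and is consistent with the intercept structure of the cluster-level specification $\xx^c_{\II}$.
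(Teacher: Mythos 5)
Your proposal is correct and follows essentially the same route as the paper's proof: apply Lemma \ref{lemma.clustertotals}, reindex the resulting sum by cluster so that $\tx'\tx^c_{\scriptscriptstyle n}$ collapses to $(\tx^c)'\tx^c$, and recognize the finite-population variance (resp.\ covariance). You simply make explicit the reindexing identity and the relation $\mu_{\tx^c}=\tfrac{n}{m}\mu_{\tx}$ that the paper's three-line proof leaves implicit, and you also spell out the $y_1$ case, which the paper's proof omits.
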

\begin{proofatend}
	Write
	\begin{align*}
	\tx'\dmat_{11}\tx=&\frac{m m_0}{(m-1)m_1} \tx' \left(\tx^c_{\scriptscriptstyle n}- \frac{n}{m}1_{\scriptscriptstyle n} \mu_{\tx} \right)
	\\=&\frac{m m_0}{(m-1)m_1} {{\tx^{c\prime}}_{\scriptscriptstyle m}} \left(\tx^c_{\scriptscriptstyle m}- \frac{n}{m}1_{\scriptscriptstyle m} \mu_{\tx} \right)
	\\ = & \frac{m^2m_0}{(m-1)m_1} \var( \tx^c_{\scriptscriptstyle m})
	\end{align*}
	where $\tx^c_{\scriptscriptstyle m}$ is an $m \times (k-1)$ vector (one row per cluster) with the $g^{th}$ row representing cluster totals of the rows of $\tx$ associated with members of the $g^{th}$ cluster.  
\end{proofatend}
\begin{proof}
Provided in Appendix.
\end{proof}

\begin{theorem}
	For cluster randomized experiments, the OLS with cluster totals coefficient in Definition \ref{def.ols.w.totals} is optimal for the fixed-coefficient generalized regression estimator. 
\end{theorem}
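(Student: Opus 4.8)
The plan is to reuse, essentially verbatim, the strategy of Section~\ref{section.ols.opt} for completely randomized designs, replacing the ingredients that were specific to complete randomization (Lemma~\ref{LemmaCovar} and its companions) by their cluster analogues, Lemmas~\ref{lemma.clustertotals} and~\ref{lemma.clustertotalsII}. The first step is to observe that complete random assignment of clusters is an equal-$\pi_1$ design, so Lemma~\ref{lemma.sep} applies: the fixed-coefficient generalized regression estimator with the ``separated'' coefficient $b^{sep}_\II$ of~(\ref{equation.sep}) attains finite-sample minimum variance, i.e.\ $b^{sep}_\II$ lies in the optimal set of Lemma~\ref{lemma.allbopt}. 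It therefore suffices to establish the \emph{algebraic} identity $b^{sep}_\II = b^{ols,c}_\II$, reading $b^{ols,c}_\II$ of Definition~\ref{def.ols.w.totals} as a coefficient vector for specification~II applied to the individual-level data (the dimensions agree, $2(k+1)$).

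The second step is to simplify each of the two blocks of $b^{sep}_\II$. For the treatment block, Lemma~\ref{lemma.clustertotalsII} gives $\tx'\dmat_{11}\tx = \frac{m^2 m_0}{(m-1)m_1}\var(\tx^c)$ and $\tx'\dmat_{11}y_1 = \frac{m^2 m_0}{(m-1)m_1}\cov(\tx^c,y^c_1)$; since $(cA)^{(-)} = c^{-1}A^{(-)}$ for a positive scalar $c$, the common constant cancels, leaving $(\tx'\dmat_{11}\tx)^{(-)}\tx'\dmat_{11}y_1 = \var(\tx^c)^{(-)}\cov(\tx^c,y^c_1)$. I would then record the matching $\dmat_{00}$ statement, obtained by the same telescoping of cluster cross-products used in the proof of Lemma~\ref{lemma.clustertotalsII} but starting from $\dmat_{00}\tx = \frac{m m_1}{(m-1)m_0}(\tx^c_{\scriptscriptstyle n} - \frac{n}{m}1_{\scriptscriptstyle n}\mu_{\tx})$ of Lemma~\ref{lemma.clustertotals}, so that $(\tx'\dmat_{00}\tx)^{(-)}\tx'\dmat_{00}y_0 = \var(\tx^c)^{(-)}\cov(\tx^c,y^c_0)$. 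Stacking the control block over the treatment block reproduces exactly the vector of Definition~\ref{def.ols.w.totals}, giving $b^{sep}_\II = b^{ols,c}_\II$ and hence the theorem. A point worth flagging is the contrast with the completely randomized case: there the intercept slot of the separated coefficient was zero and had to be shifted to $\mu_{y_j}$ by choosing $z$ in Lemma~\ref{lemma.allbopt}, whereas here the constant column of specification~II behaves at the design level like the vector of cluster sizes (because $\dmat_{11}1_{\scriptscriptstyle n}$ is, up to scaling, the centered cluster-size vector), so its coefficient is genuinely nontrivial and $b^{sep}_\II$ already equals $b^{ols,c}_\II$ with no $z$-shift required.

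Most of this is bookkeeping; the one spot that needs care is the $\dmat_{00}$ counterpart of Lemma~\ref{lemma.clustertotalsII}, which is not stated in the excerpt and must be checked to carry the control-arm constant $\frac{m^2 m_1}{(m-1)m_0}$ (so that it still cancels under the Moore-Penrose inverse) rather than the treatment-arm constant. I would also make explicit the genericity assumption already implicit in Definition~\ref{def.ols.w.totals}, namely that the cluster totals of $\tx$ are not perfectly collinear, so that $\var(\tx^c)^{(-)}$ acts as an ordinary inverse; should that fail, the generalized-inverse expression is still one admissible member of the optimal set, which is all the theorem requires.
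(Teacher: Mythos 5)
Your proposal is correct and follows essentially the same route as the paper: invoke Lemma \ref{lemma.sep} (cluster randomization being an equal-$\pi_1$ design) to reduce to the separated coefficient $b^{sep}_\II$, then apply Lemma \ref{lemma.clustertotalsII} blockwise so the design constants cancel under the generalized inverse, yielding $b^{sep}_\II=b^{ols,c}_\II$. The extra care you flag about the unstated $\dmat_{00}$ analogue of Lemma \ref{lemma.clustertotalsII} and about the intercept slot behaving as a cluster-size control is a slightly more explicit rendering of what the paper leaves implicit (and states in its subsequent remark), not a different argument.
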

\begin{proof}
	Since $\pi_{1i}$ is equal for all $i$ in a cluster randomized experiment, then using Lemma \ref{lemma.sep}, we have that the optimal solution includes the separated coefficients in equation (\ref{equation.sep}). So, by Lemma \ref{lemma.clustertotalsII}, for cluster randomized experiments
	\begin{align*}\label{equation.sep}
	b^{sep}_\II= &\left[\begin{matrix} 
	(\tx ' \dmat_{00} \tx )^{(-)} \tx'\dmat_{00} y_0 \\ (\tx ' \dmat_{11} \tx )^{(-)} \tx'\dmat_{11} y_1 
	\end{matrix} \right]
	\\=& \left[\begin{matrix} 
	\var(	{\tx^c})^{(-)} \cov({\tx^c}, y^c_0) \\ \var(	{\tx^c})^{(-)} \cov({\tx^c}, y^c_1)
	\end{matrix} \right].
	\end{align*}
\end{proof}
\begin{remark}
	Note that the ``intercept'' terms are no longer constants when $\tx$ is collapsed to $\tx^c$.  In a sense, the intercept terms now ``control" for cluster size in OLS with cluster totals.
\end{remark}

\begin{theorem}
	Under Assumptions 1 and 2, in a cluster-randomized design with specification \emph{II}, the OLS coefficient with cluster totals estimator given in Definition \ref{def.ols.w.totals.est} has a conjugate ATE estimator that obtains asymptotic minimum variance within the class of generalized regression estimators.
\end{theorem}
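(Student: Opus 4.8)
The plan is to mirror the proof of Theorem~\ref{b.ols.est.isOpt}: reduce the statement to a probability-limit computation and then evaluate first moments. By the preceding theorem, $b^{ols,c}_\II$ of Definition~\ref{def.ols.w.totals} is an optimal fixed-coefficient for the generalized regression estimator under cluster randomization, so by Theorem~\ref{thrm.bopt} the quantity $n^{-2}u'\dmat u$ with $u=y-\xx_\II b^{ols,c}_\II$ is the minimum finite-sample variance in the class. By the theorem of Section~\ref{section.asymptotic.arguments}, it then suffices to take Assumption~1 as a hypothesis (as in Theorem~\ref{b.ols.est.isOpt}) and to verify Assumption~2 with limit $b=b^{ols,c}_\II$, i.e., $\widehat b^{ols,c}_\II - b^{ols,c}_\II = O(n^{-r})$ for some $r>0$; the conjugate $\widehat\delta^{\tr,ols,c}_\II$ then has limiting variance $\lim_{n\to\infty} n^{-1}u'\dmat u$, which is the asymptotic minimum over the generalized regression class. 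So the whole task is the convergence of $\widehat b^{ols,c}_\II$.

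First I would compute the first moments of the numerator and denominator in Definition~\ref{def.ols.w.totals.est}. Under complete randomization of the $m$ clusters, $\E[\R^c]$ is diagonal with control entries $m_0/m$ and treatment entries $m_1/m$. Because the control- and treatment-arm columns of $\xx^c_\II$ are supported on disjoint halves of the $2m$ rows, the treatment$\times$control cross-blocks of $\E[{\xx^c_\II}'\R^c\xx^c_\II]$ vanish, and writing $Z:=[1_{\scriptscriptstyle m}\ \tx^c]$ one obtains
\begin{align*}
\E[{\xx^c_\II}'\R^c\xx^c_\II]=\mathrm{Bdiag}\!\left(\tfrac{m_0}{m}Z'Z,\ \tfrac{m_1}{m}Z'Z\right),\qquad \E[{\xx^c_\II}'\R^c y^c]=\begin{bmatrix}\tfrac{m_0}{m}Z'y^c_0\\[2pt]\tfrac{m_1}{m}Z'y^c_1\end{bmatrix}.
\end{align*}
The $m_0/m$ and $m_1/m$ factors cancel, so $\E[\cdot]^{-1}\E[\cdot]$ is the stacked least-squares fit of $y^c_0$ and $y^c_1$ on $Z$; partialling out the constant column via Frisch--Waugh--Lovell leaves the $\tx^c$-block equal to $[\var(\tx^c)^{(-)}\cov(\tx^c,y^c_0);\ \var(\tx^c)^{(-)}\cov(\tx^c,y^c_1)]$, which is exactly $b^{ols,c}_\II$. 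That the first column of $\tx^c$ records cluster sizes rather than ones is immaterial here, since the separate constant column in $\xx^c_\II$ absorbs it.

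Next I would pass from exact moments to convergence. Each of ${\xx^c_\II}'\R^c\xx^c_\II$ and ${\xx^c_\II}'\R^c y^c$ is a sum over the $m$ clusters of a bounded term --- an assignment indicator times a fixed cluster-total quantity --- so $\widehat b^{ols,c}_\II$ is a smooth (ratio) function of sample averages of such terms. Under the regularity already needed for Assumption~1 --- clusters proliferating, cluster totals and $\var(\tx^c)$ uniformly controlled and the latter bounded away from singularity --- a finite-population law of large numbers gives $m^{-1}{\xx^c_\II}'\R^c\xx^c_\II$ and $m^{-1}{\xx^c_\II}'\R^c y^c$ converging in probability to $m^{-1}$ times their means, with deviations $O_p(m^{-1/2})$. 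Slutsky's theorem then yields $\widehat b^{ols,c}_\II - b^{ols,c}_\II = O_p(m^{-1/2}) = O_p(n^{-r})$ for some $r>0$, verifying Assumption~2, and the theorem of Section~\ref{section.asymptotic.arguments} completes the proof.

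The main obstacle is not the algebra --- which is routine once the arm-block structure of $\xx^c_\II$ is used --- but making the convergence step honest: a few very large clusters can simultaneously defeat the law of large numbers and spoil the $O(n^{-r})$ rate, and a degenerating $\var(\tx^c)$ would destabilize the inverse $(\,{\xx^c_\II}'\R^c\xx^c_\II)^{-1}$. These are exactly the ``suitable regularity conditions'' left implicit in Theorem~\ref{b.ols.est.isOpt}; the delicate part is phrasing them so that they are compatible with, and essentially subsumed by, Assumption~1 applied to $\mathbb{z}=[y\ \xx_\II]$ under cluster randomization.
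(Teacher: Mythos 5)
Your proposal is correct and follows essentially the approach the paper uses for the analogous complete-randomization result (Theorem \ref{b.ols.est.isOpt}): compute $\E[{\xx^c_\II}'\R^c\xx^c_\II]$ and $\E[{\xx^c_\II}'\R^c y^c]$, observe that the $m_0/m$ and $m_1/m$ factors cancel so the ratio of expectations recovers $b^{ols,c}_\II$ after partialling out the intercept columns, and then appeal to regularity for convergence and to the asymptotic theorem of Section \ref{section.asymptotic.arguments}. In fact the paper's source contains no appendix proof for this particular theorem despite the ``Provided in Appendix'' pointer, and your write-up -- including the observations that the cluster-size column is absorbed by the separate constants and that the convergence step is where the real regularity conditions live -- supplies exactly the intended argument, if anything more carefully than the paper's template.
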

\begin{proof}
	Provided in Appendix.
\end{proof}

	\subsection{Tyranny of the minority with cluster totals is optimal for cluster randomized experiments with specification I}

In this section, it will be shown that an optimal coefficient for the fixed-coefficient generalized regression estimator for cluster-randomized designs and specification I is the ``tyranny of the minority with cluster totals" coefficient, call it $b^{tyr,c}_\sI$. A WLS estimator of the coefficient will be defined. The section will also show that tyranny of the minority with cluster totals can achieve asymptotic precision using specification I that is as good as optimal estimators that use specification II.

First define the tyranny of the minority coefficient for cluster totals for specification I and its estimator.

\begin{definition}[Tyranny of the minority with cluster totals coefficient]\label{def.tyr.c}
	The ``tyranny of the minority" with cluster totals coefficient for specification \emph{I} is given by
	\emph{\begin{align*}
		b^{tyr,c}_\sI
		:=&
		\frac{m_1}{m} \var(\tx^c)^{(-)}\cov(\tx^c, y^c_0) + \frac{m_0}{m} \var(\tx^c)^{(-)}\cov(\tx^c, y^c_1).
		\end{align*}}
\end{definition}

Next, to define the corresponding coefficient estimator, first let specification I$^c$ be as follows

\begin{align*}
\xx^c_{\sI}:= \left[\begin{matrix}
-1_{\scriptscriptstyle m} &   0_{\scriptscriptstyle m} & -\tx^c
\\ 0_{\scriptscriptstyle m}  & 1_{\scriptscriptstyle m} & \tx^c
\end{matrix}\right].
\end{align*}
Note $\xx^c_{\sI}$ has $l+1$ columns where $\xx_{\sI}$ has only $l$.

\begin{definition}[Tyranny of the minority with cluster totals coefficient estimator]\label{def.tyr.c.est} The ``tyranny of the minority with cluster totals coefficient estimator" for specification \emph{I} is
	\emph{\begin{align*}
		\left[ \begin{matrix}
		\widehat{a}_0 \\ \widehat{a}_1 \\
		\widehat{b}^{tyr,c}_\sI
		\end{matrix} \right]
		:=& \left({\xx^c_\sI}' \R^c \left(({\bpi^c})^{-1}- \I_{\scriptscriptstyle 2m}\right) \xx^c_\sI \right)^{-1} {\xx^c_\sI}' \R^c \left(({\bpi^c})^{-1}- \I_{\scriptscriptstyle 2m}\right) y^c.
		\end{align*}} where ${\bpi^c}$ is a $2m \times 2m$ matrix giving probabilities of assignment along the diagonal. 	
\end{definition}

To prove that $b^{tyr,c}_\sI$ is an optimal choice of coefficient for the fixed-coefficient generalized regression estimator, first define an equivalent coefficient for specification II.

\begin{definition}[Tyranny of the minority with cluster totals for specification II]\label{def.tyr.c.II}
	The ``tyranny of the minority with cluster totals coefficient" for specification \emph{II} is 
	\emph{\begin{align*}
		b^{tyr,c}_\II = \left[\begin{matrix}
		  \frac{m_1}{m}\var({\tx^c})^{-1}\cov({\tx^c}, y^c_0)+\frac{m_0}{m}\var({\tx^c})^{-1}\cov(\tx^c, y^c_1)
		\\ 		  \frac{m_1}{m}\var({\tx^c})^{-1}\cov( {\tx^c}, y^c_0)+\frac{m_0}{m}\var({\tx^c})^{-1}\cov({\tx^c}, y^c_1)
		\end{matrix} \right].
		\end{align*}}
\end{definition}

Comparing Definition \ref{def.tyr.c.II} to Definition \ref{def.tyr.c} reveals that the ``slope" coefficients are identical in the two specifications.  The implication is that $\xx^c_\sI b^{tyr,c}_\sI=\xx^c_\II b^{tyr,c}_\II$ and hence, the conjugate ATE estimators are algebraically equivalent. Therefore, if $b^{tyr,c}_\II$ is in the set of optimal choices for a fixed-coefficient in specification II, then $b^{tyr,c}_\sI$ must be among the optimal coefficients for the fixed-coefficient generalized regression estimator for specification I.

\begin{theorem}
	For cluster-randomized experiments with specification \emph{II}, the tyranny of the minority with cluster totals coefficient given in Definition \ref{def.tyr.c.II} is an optimal coefficient for the fixed-coefficient generalized regression estimator.
\end{theorem}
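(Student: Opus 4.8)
The plan is to run the very same argument that proved the specification~II tyranny-of-the-minority theorem for completely randomized designs, with cluster counts $m,m_0,m_1$ and cluster totals $\tx^c,y^c_0,y^c_1$ playing the roles of $n,n_0,n_1$ and $\x,y_0,y_1$. The point of entry is that a cluster-randomized design is an equal-$\pi_1$ design (every unit has $\pi_{1i}=m_1/m$), so Lemma~\ref{lemma.sep} applies: the set of optimal fixed-coefficients for specification~II contains every vector of the form (\ref{b.optgZ}) as $z$ ranges over $\mathds{R}^l$. So it suffices to exhibit a $z$ for which $b^{opt,gz}_\II$ equals the $b^{tyr,c}_\II$ of Definition~\ref{def.tyr.c.II}.

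First I would substitute the cluster-total identities into the blocks appearing in (\ref{b.optgZ}). By Lemma~\ref{lemma.clustertotalsII} and its analogues for $\dmat_{00},\dmat_{01},\dmat_{10}$, each of $\tx'\dmat_{00}\tx$, $\tx'\dmat_{01}\tx$, $\tx'\dmat_{10}\tx$ is a fixed scalar multiple of $\var(\tx^c)$ — namely $\tfrac{m^2 m_1}{(m-1)m_0}\var(\tx^c)$, $-\tfrac{m^2}{m-1}\var(\tx^c)$, $-\tfrac{m^2}{m-1}\var(\tx^c)$ — and $\tx'\dmat_{00}y_0$, $\tx'\dmat_{01}y_1$ are the same multiples of $\cov(\tx^c,y^c_0)$, $\cov(\tx^c,y^c_1)$. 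Pulling these scalars through the Moore--Penrose inverse, $(\tx'\dmat_{00}\tx)^{(-)}\tx'\dmat_{01}\tx$ collapses to $-\tfrac{m_0}{m_1}\mathbf{I}$ and $(\tx'\dmat_{00}\tx)^{(-)}(\tx'\dmat_{00}y_0-\tx'\dmat_{01}y_1)$ to $\var(\tx^c)^{(-)}\cov(\tx^c,y^c_0)+\tfrac{m_0}{m_1}\var(\tx^c)^{(-)}\cov(\tx^c,y^c_1)$, so the control-arm block of (\ref{b.optgZ}) becomes this expression minus $\tfrac{m_0}{m_1}$ times the treatment-arm sub-vector of $z$, while the treatment-arm block is exactly that sub-vector of $z$ — the identical pattern that appeared in the completely randomized proof.

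Then I would set the treatment-arm sub-vector of $z$ equal to $b^{tyr,c}_\sI=\tfrac{m_1}{m}\var(\tx^c)^{(-)}\cov(\tx^c,y^c_0)+\tfrac{m_0}{m}\var(\tx^c)^{(-)}\cov(\tx^c,y^c_1)$ (taking its control-arm sub-vector to be zero in the generic case). The one-line simplification that made the completely randomized proof a one-liner — here using $1-\tfrac{m_0}{m}=\tfrac{m_1}{m}$ and $\tfrac{m_0}{m_1}-\tfrac{m_0^2}{m_1 m}=\tfrac{m_0}{m}$ — shows the control-arm block of the resulting $b^{opt,gz}_\II$ is also $b^{tyr,c}_\sI$, so $b^{opt,gz}_\II$ is the two-block vector of Definition~\ref{def.tyr.c.II}, i.e.\ $b^{tyr,c}_\II$. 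By Lemma~\ref{lemma.allbopt} this is an optimal fixed-coefficient, which is the claim.

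The main obstacle is bookkeeping rather than conceptual: one must carry the constants $\tfrac{m^2 m_1}{(m-1)m_0},\ \tfrac{m^2 m_0}{(m-1)m_1},\ -\tfrac{m^2}{m-1}$ faithfully through the generalized-inverse algebra so that the cancellations really yield the $\tfrac{m_1}{m},\tfrac{m_0}{m}$ weights, and — exactly as in the proof of Lemma~\ref{lemma.sep} — one must invoke the reflexive property of the Moore--Penrose inverse, not an ordinary inverse, in the degenerate case where $\var(\tx^c)$ is rank-deficient (for instance when all clusters have equal size, so the cluster-size column of $\tx^c$ is constant), in which case the residual free direction of $z$ plays the role the intercept component played in the completely randomized argument.
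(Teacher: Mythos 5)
Your proposal is correct and follows essentially the same route as the paper: invoke Lemma \ref{lemma.allbopt} and the separated form in equation (\ref{b.optgZ}), substitute the cluster-total identities of Lemma \ref{lemma.clustertotalsII}, and choose $z$ with control block zero and treatment block equal to the tyranny-of-the-minority slope vector. The paper's proof simply states this choice of $z$ and asserts "the result follows," whereas you have (correctly) carried out the cancellation $1-\tfrac{m_0}{m}=\tfrac{m_1}{m}$ and $\tfrac{m_0}{m_1}-\tfrac{m_0^2}{m_1 m}=\tfrac{m_0}{m}$ that the paper leaves implicit.
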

\begin{proof}
	Beginning with Lemma \ref{lemma.allbopt} and again arriving at equation (\ref{b.optgZ}), this time let
	\begin{align*}
	z = & \left[\begin{matrix}  0_{\scriptscriptstyle (k+1)} \\  \frac{m_1}{m}\var(\tx^c)^{-1}\cov(\tx^c, y^c_0)+\frac{m_0}{m}\var(\tx^c)^{-1}\cov(\tx^c, y^c_1)  \end{matrix} \right].
	\end{align*}
	The result follows.
\end{proof}
\begin{corollary}
	For cluster-randomized experiments with specification \emph{I}, the tyranny of the minority coefficient given in Definition \ref{def.tyr.c} is an optimal coefficient for the fixed-coefficient generalized regression estimator.
\end{corollary}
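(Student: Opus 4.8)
The plan is to deduce the corollary from the specification-II theorem just proved, exactly as the analogous specification-I statement is obtained in the completely randomized case. The one fact doing the work is that, by Lemma \ref{lemma.finiteVar} (equivalently, by inspection of form (\ref{greg.a})), the fixed-coefficient generalized regression estimator and its variance $n^{-2}u'\dmat u$, with $u=y-\xx b^f$, depend on $b^f$ only through the fitted-value vector $\xx b^f$. Hence two fixed coefficients --- possibly attached to different covariate specifications --- that produce the same fitted values have algebraically identical conjugate ATE estimators and therefore the same variance.

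First I would record that every column of $\xx^c_\sI$ lies in the column space of $\xx^c_\II$: the constant columns $-1_{\scriptscriptstyle m}$ and $1_{\scriptscriptstyle m}$ are columns of $\xx^c_\II$, and each remaining column $(-\tx^c_j,\ \tx^c_j)'$ of $\xx^c_\sI$ is the sum of the $j$th columns of the third and fourth blocks of $\xx^c_\II$. Consequently the set of fitted-value vectors realizable under specification I$^c$ is contained in that realizable under specification II$^c$, so the minimum of $n^{-2}u'\dmat u$ over specification-I$^c$ coefficients is at least the minimum over specification-II$^c$ coefficients, the latter attained at $b^{tyr,c}_\II$ by the preceding theorem. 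Second, I would verify the fitted-value identity $\xx^c_\sI b^{tyr,c}_\sI = \xx^c_\II b^{tyr,c}_\II$: comparing Definitions \ref{def.tyr.c} and \ref{def.tyr.c.II}, the ``slope'' block $\frac{m_1}{m}\var(\tx^c)^{(-)}\cov(\tx^c,y^c_0)+\frac{m_0}{m}\var(\tx^c)^{(-)}\cov(\tx^c,y^c_1)$ is common to both and is applied to the same columns $\pm\tx^c$ within each arm, while the two arm-specific constants can be chosen equal (the optimal set leaves these free). Writing $u_\sI=y-\xx^c_\sI b^{tyr,c}_\sI$, $u_\II=y-\xx^c_\II b^{tyr,c}_\II$, and letting $\min_{\mathrm{I}^{c}},\min_{\mathrm{II}^{c}}$ range over fixed coefficients of the two specifications, these observations together with the theorem give
\begin{align*}
n^{-2}u_\sI'\dmat u_\sI = n^{-2}u_\II'\dmat u_\II = \min_{\mathrm{II}^{c}} n^{-2}u'\dmat u \le \min_{\mathrm{I}^{c}} n^{-2}u'\dmat u \le n^{-2}u_\sI'\dmat u_\sI .
\end{align*}
All displayed quantities are therefore equal, so $b^{tyr,c}_\sI$ attains the minimum variance over specification-I$^c$ coefficients; equivalently it solves the first-order condition $(\xx^c_\sI{}'\dmat\xx^c_\sI)b^f=\xx^c_\sI{}'\dmat y$ and so lies in the optimal set of Lemma \ref{lemma.allbopt}, which is the assertion.

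The delicate point I expect is the fitted-value identity itself: in the clustered specifications the leading constant columns together with the cluster-size column inside $\tx^c$ jointly act as an intercept that adjusts for cluster size, so one must confirm that a single common choice of the two arm-specific constants actually makes $\xx^c_\sI b^{tyr,c}_\sI$ and $\xx^c_\II b^{tyr,c}_\II$ coincide --- the analogue of the trivial matching of $\mu_{y_0},\mu_{y_1}$ in the completely randomized case, here carrying a bit more structure. If instead ``optimal coefficient'' is read as attaining the minimum variance across the whole class of generalized regression estimators, then $b^{tyr,c}_\II$ already attains that global minimum by the theorem, the fitted-value identity transfers it verbatim to $b^{tyr,c}_\sI$, and the column-space step is not needed.
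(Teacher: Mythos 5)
Your argument is correct and follows essentially the same route as the paper: the paper derives the specification-I corollary from the specification-II theorem by observing that the slope blocks in Definitions \ref{def.tyr.c} and \ref{def.tyr.c.II} coincide, so that $\xx^c_\sI b^{tyr,c}_\sI=\xx^c_\II b^{tyr,c}_\II$ and the conjugate ATE estimators are algebraically equivalent. The one thing you add --- the column-space containment of $\xx^c_\sI$ in $\xx^c_\II$, which is what licenses concluding specification-I optimality from attainment of the specification-II minimum --- is left implicit in the paper but is exactly the right step to make the transfer airtight.
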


\begin{theorem}
	Under Assumptions 1 and 2, in a cluster-randomized design with specification \emph{I}, the tyranny of the minority with cluster totals coefficient estimator given in Definition \ref{def.tyr.c.est} has a conjugate ATE estimator that obtains asymptotic minimum variance within the class of generalized regression estimators. 
\end{theorem}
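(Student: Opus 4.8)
The plan is to follow the template established by the analogous results for completely randomized designs (the theorem after Definition \ref{def.tyr.est}) and for OLS with cluster totals (the theorem after Definition \ref{def.ols.w.totals.est}): show that the estimator of Definition \ref{def.tyr.c.est} is consistent for the population coefficient $b^{tyr,c}_\sI$ of Definition \ref{def.tyr.c}, which the two immediately preceding results (the specification-\emph{II} theorem and its corollary) have already shown to be an optimal fixed coefficient, and then invoke the asymptotic theorem of Section \ref{section.asymptotic.arguments}. Recall also that $\xx^c_\sI b^{tyr,c}_\sI = \xx^c_\II b^{tyr,c}_\II$, so that the conjugate of $b^{tyr,c}_\sI$ is algebraically the same ATE estimator whether one views it through specification \emph{I} or specification \emph{II}; this lets the optimality (which was proven for specification \emph{II}) transfer directly.

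First I would compute the expectations of the ``numerator'' and ``denominator'' random matrices in Definition \ref{def.tyr.c.est}. Since $\E[\R^c]=\bpi^c$, $\E\!\left[{\xx^c_\sI}' \R^c \left(({\bpi^c})^{-1}- \I_{\scriptscriptstyle 2m}\right) \xx^c_\sI\right] = {\xx^c_\sI}' \bpi^c\left(({\bpi^c})^{-1}- \I_{\scriptscriptstyle 2m}\right) \xx^c_\sI = {\xx^c_\sI}'\left(\I_{\scriptscriptstyle 2m}-\bpi^c\right)\xx^c_\sI$, and similarly $\E\!\left[{\xx^c_\sI}' \R^c \left(({\bpi^c})^{-1}- \I_{\scriptscriptstyle 2m}\right) y^c\right] = {\xx^c_\sI}'\left(\I_{\scriptscriptstyle 2m}-\bpi^c\right)y^c$. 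Because all members of a cluster share an assignment indicator, these cross-products are sums over clusters of cluster-total outer products (the same collapsing used in Lemmas \ref{lemma.clustertotals}--\ref{lemma.clustertotalsII}), so the ratio of the two expectations equals $({\xx^c_\sI}'(\I_{\scriptscriptstyle 2m}-\bpi^c)\xx^c_\sI)^{-1}{\xx^c_\sI}'(\I_{\scriptscriptstyle 2m}-\bpi^c)y^c$, whose ``slope'' block is precisely $b^{tyr,c}_\sI$ (the intercept blocks being immaterial for the conjugate ATE estimator). Then, under regularity conditions that supply a design-based law of large numbers for these cluster-level HT-type cross-products, the random ``denominator'' converges in probability to its nonrandom expectation, which is nonsingular on the relevant subspace in the limit; Slutsky's theorem and the continuous mapping theorem give $\widehat{b}^{tyr,c}_\sI \xrightarrow{p} b^{tyr,c}_\sI$, and under the usual root-$m$ concentration this convergence is at the rate needed for Assumption 2 to hold with some $r>0$.

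With consistency in hand, the conclusion is immediate: $b^{tyr,c}_\sI$ minimizes $n^{-2}u'\dmat u$ among fixed coefficients (by the preceding theorem/corollary, via $\xx^c_\sI b^{tyr,c}_\sI = \xx^c_\II b^{tyr,c}_\II$ and Lemma \ref{lemma.sep}), so under Assumptions 1 and 2 the asymptotic theorem of Section \ref{section.asymptotic.arguments} shows that $\sqrt{n}(\widehat{\delta}^{\tr,tyr,c}_\sI-\delta)$ has limiting variance $\lim_{n\to\infty} n^{-1} u'\dmat u$ with $u = y-\xx^c_\sI b^{tyr,c}_\sI$, and this limiting variance is the minimum attainable over the class of generalized regression estimators, which is the claim.

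The main obstacle is the consistency step, not the expectation algebra, which is routine. Because Definition \ref{def.tyr.c.est} carries a random ``denominator'' matrix, one must supply (or assume) regularity conditions — analogous to Assumption 1 but applied to the cluster-level HT estimators of the cross-products ${\xx^c_\sI}'\R^c((\bpi^c)^{-1}-\I_{\scriptscriptstyle 2m})\xx^c_\sI$ and ${\xx^c_\sI}'\R^c((\bpi^c)^{-1}-\I_{\scriptscriptstyle 2m})y^c$ — under which these matrices concentrate around their expectations and the limiting denominator is nonsingular on the subspace spanned by the retained regressors. Without such conditions the ratio need not be well behaved (for instance if a handful of large clusters dominate), which is exactly the degeneracy flagged in the footnote on cluster totals; with them in place the argument goes through as above, and the detailed verification can be deferred to the appendix as in the companion proofs.
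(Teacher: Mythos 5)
Your proposal matches the paper's own argument: the paper likewise computes $\E\bigl[{\xx_\sI}'\R(\bpi^{-1}-\I_{\scriptscriptstyle 2n})\xx_\sI\bigr]$ and $\E\bigl[{\xx_\sI}'\R(\bpi^{-1}-\I_{\scriptscriptstyle 2n})y\bigr]$, shows their ratio recovers the population tyranny-of-the-minority coefficient, and then asserts convergence ``under suitable regularity conditions'' so that optimality of the fixed coefficient (already established via the specification-\emph{II} theorem and its corollary) transfers to the conjugate ATE estimator. Your added care about the cluster-level collapsing and the explicit flagging of the consistency/regularity step is, if anything, more complete than the paper's appendix computation, which is written at the unit level.
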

\begin{proofatend}
	\begin{align*}
	\E\left[\xx_\sI' \R \left( \bpi^{-1}-\I_{\scriptscriptstyle 2n} \right) \xx_\sI \right]=& \xx_\sI' \bpi \left( \bpi^{-1}-\I_{\scriptscriptstyle 2n}\right) \xx_\sI
	\\=& \xx_\sI' \left( \I_{\scriptscriptstyle 2n}-\bpi\right) \xx_\sI
	\\ =& \left[\begin{matrix}
	0' & 1' \\ -1' & 1'\\ -\x' & \x'
	\end{matrix}\right] \left( \I_{\scriptscriptstyle 2n}-\bpi\right) \left[\begin{matrix}
	0 & -1 & -\x \\ 1 & 1 & \x
	\end{matrix}\right] 
	\\ =& \left[\begin{matrix}
	0' & 1' \\ -1' & 1'\\ -\x' & \x'
	\end{matrix}\right] \left[\begin{matrix}
	0 & -\frac{n_1}{n}1 & -\frac{n_1}{n}\x \\ \frac{n_0}{n}1 & \frac{n_0}{n}1 & \frac{n_0}{n}\x
	\end{matrix}\right] 
	\\ =& \left[\begin{matrix}
	n_0 & n_0 & 0 \\ n_0 & n & 0
	\\ 0 & 0 & n \var(\x)
	\end{matrix}\right]
	\end{align*}
	and 
	\begin{align*}
	\E\left[\xx_\sI' \R \left( \bpi^{-1}-\I_{\scriptscriptstyle 2n} \right) y \right]=& \xx_\sI' \bpi \left( \bpi^{-1}-\I_{\scriptscriptstyle 2n}\right) y
	\\=& \xx_\sI' \left( \I_{\scriptscriptstyle 2n}-\bpi\right) \xx_\sI
	\\ =& \left[\begin{matrix}
	0' & 1' \\ -1' & 1'\\ -\x' & \x'
	\end{matrix}\right] \left( \I_{\scriptscriptstyle 2n}-\bpi\right) \left[\begin{matrix}
	-y_0  \\ y_1
	\end{matrix}\right] 
	\\ =& \left[\begin{matrix}
	0' & 1' \\ -1' & 1'\\ -\x' & \x'
	\end{matrix}\right] \left[\begin{matrix}
	-\frac{n_1}{n}y_0  \\ \frac{n_0}{n}y_1
	\end{matrix}\right] 
	\\ =& \left[\begin{matrix}
	n_0 \mu_{y_1} \\ n_1 \mu_{y_0} + n_0 \mu_{y_1} \\ n_1 \cov(\x, y_0)+ n_0 \cov(\x, y_1)
	\end{matrix}\right]
	\end{align*}
	so that 
	\begin{align*}
	\E\left[\xx_\sI' \R \left( \bpi^{-1}-\I_{\scriptscriptstyle 2n} \right) \xx_\sI \right]^{-1}&\E\left[\xx_\sI' \R \left( \bpi^{-1}-\I_{\scriptscriptstyle 2n} \right) y \right]
	\\=& \left[\begin{matrix}
	n_0 & n_0 & 0 \\ n_0 & n & 0
	\\ 0 & 0 & n \var(\x)
	\end{matrix}\right] ^{-1} \left[\begin{matrix}
	n_0 \mu_{y_1} \\ n_1 \mu_{y_0} + n_0 \mu_{y_1} \\ n_1 \cov(\x, y_0)+ n_0 \cov(\x, y_1)
	\end{matrix}\right]
	\\=& \left[\begin{matrix}
	nn_1^{-1}n_0^{-1} & - n_1^{-1} & 0 
	\\ -n_1^{-1} & n_1^{-1} & 0
	\\ 0 & 0 & n^{-1} \var(\x)^{-1}
	\end{matrix}\right] \left[\begin{matrix}
	n_0 \mu_{y_1} \\ n_1 \mu_{y_0} + n_0 \mu_{y_1} \\ n_1 \cov(\x, y_0)+ n_0 \cov(\x, y_1)
	\end{matrix}\right]
	\\=&\left[\begin{matrix}
	\delta \\ \mu_{y_0} \\ \frac{n_1}{n} \var(\x)^{-1}\cov(\x, y_0)+ \frac{n_0}{n} \var(\x)^{-1} \cov(\x, y_1)
	\end{matrix}\right].
	\end{align*}
	Thus, under suitable regularity conditions $\widehat{b}^{tyr}_\sI \rightarrow b^{tyr}_\sI$ so that $\widehat{\delta}^{\tr,tyr}_\sI$ is asymptotically optimal.
\end{proofatend}
\begin{proof}
	Provided in Appendix.
\end{proof}

	\subsection{A tighter variance bound for cluster randomized experiments}
	
	There are tighter bounds than that implied by Aronow and Samii's $\tilde{\dmat}^{\tas}$ for cluster randomized experiments. Suppose, for example, you had complete randomization of clusters and at least two clusters assigned to each arm. Then $\tilde{\dmat}$ associated with the variance bound implied by \cite{middletonaronow} could be written 
	\begin{align*}
	\tilde{\dmat}^c =\dmat +\left[\begin{array}{ccc}
	\bfI \left( \dmat_{01}=-1 \right) & \bfI \left( \dmat_{01}=-1 \right) 
	\\ \bfI \left( \dmat_{01}=-1 \right) & \bfI\left( \dmat_{01}=-1 \right)\end{array}\right]
	\end{align*}

	\begin{theorem}
		For cluster randomized experiments, $n^{-2} y'\tilde{\dmat}^c y$ represents an identified bound for $n^{-2}y'{\dmat}y$. In other words, for all $ y \in \mathds{R}^{\scriptscriptstyle 2n}$, $ n^{-2} y' \tilde{\dmat} y \leq  n^{-2} y' \tilde{\dmat}^c y$ and $n^{-2}y'\tilde{\dmat}^c y$ is identified. 
	\end{theorem}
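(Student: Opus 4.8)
The plan is to verify directly the two requirements that make $n^{-2}y'\tilde{\dmat}^c y$ an identified variance bound in the sense of Definition~\ref{def.identified.bound}: by Lemma~\ref{psd} it suffices to show (i) $\tilde{\dmat}^c-\dmat$ is positive semi-definite, and (ii) every entry of $\dmat$ equal to $-1$ sits at a position where $\tilde{\dmat}^c$ is $0$. Write $\mathbf{J}:=\bfI(\dmat_{01}=-1)$ for the $n\times n$ block appearing in the definition of $\tilde{\dmat}^c$, so that $\tilde{\dmat}^c-\dmat$ is the $2n\times 2n$ matrix all four of whose $n\times n$ blocks equal $\mathbf{J}$. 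For any $y\in\mathds{R}^{\scriptscriptstyle 2n}$ split into two $n$-vectors $y=(a',b')'$, one has $y'(\tilde{\dmat}^c-\dmat)y=(a+b)'\mathbf{J}(a+b)$, so property (i) reduces to positive semi-definiteness of the single block $\mathbf{J}$.

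Next I would identify $\mathbf{J}$ explicitly. The $ij$ entry of $\dmat_{01}$ is $(\pi_{0i1j}-\pi_{0i}\pi_{1j})/(\pi_{0i}\pi_{1j})$, which equals $-1$ exactly when $\pi_{0i1j}=0$, i.e.\ when it is impossible for $i$ to be in control while $j$ is in treatment. Under complete randomization of clusters this occurs precisely when $i$ and $j$ lie in the same cluster (the case $i=j$ being the ``fundamental problem of causal inference''). Hence, after reordering the units by cluster, $\mathbf{J}$ is block diagonal with each block an all-ones matrix $\mathbf{1}\mathbf{1}'$; each such block is positive semi-definite, so $\mathbf{J}$ is positive semi-definite and with it $\tilde{\dmat}^c-\dmat$. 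By Lemma~\ref{psd}, $n^{-2}y'\tilde{\dmat}^c y$ is a variance bound.

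For property (ii) I would locate all the $-1$ entries of $\dmat$. Because the design has at least two clusters in each arm, every pair of units has strictly positive probability of being jointly assigned to control and strictly positive probability of being jointly assigned to treatment, so $\dmat_{00}$ and $\dmat_{11}$ contain no $-1$ entries; the $-1$ entries of $\dmat$ therefore all lie in the off-diagonal blocks $\dmat_{01}$ and $\dmat_{10}$. Running the same computation on $\dmat_{10}$ shows $[\dmat_{10}]_{ij}=-1$ iff $i,j$ are in the same cluster, so $\bfI(\dmat_{10}=-1)=\bfI(\dmat_{01}=-1)=\mathbf{J}$ (the same-cluster indicator is symmetric). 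Adding $\mathbf{J}$ to each off-diagonal block of $\dmat$ thus replaces each $-1$ by $-1+1=0$, which is exactly the condition $\bfI(\dmat=-1)\circ\bfI(\tilde{\dmat}^c=0)=\bfI(\dmat=-1)$. Together with the variance-bound property this gives the claim.

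The step I expect to be the main obstacle is the bookkeeping in (ii): one must check that the block the definition adds to $\dmat_{10}$ is written as $\bfI(\dmat_{01}=-1)$ yet still annihilates the $-1$ entries of $\dmat_{10}$, which comes down to the symmetry of ``same cluster'' noted above, and one must also confirm that no $-1$ hides in $\dmat_{00}$ or $\dmat_{11}$, which is where the ``at least two clusters per arm'' hypothesis is used. Everything else is a direct verification, and the connection to the bound of \cite{middletonaronow} can be noted in passing but is not needed for the proof.
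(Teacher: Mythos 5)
Your proof is correct, and it reaches the paper's conclusion by a slightly different route. The paper works scalar-by-scalar: it isolates the unobservable within-cluster cross terms $\phi=2\sum_i\sum_j y_{1i}y_{0j}\bfI(c_i=c_j)$, writes the added terms $\sum_i\sum_j(y_{0i}y_{0j}+y_{1i}y_{1j})\bfI(c_i=c_j)$ as $\phi+\sum_i\sum_j\tau_i\tau_j\bfI(c_i=c_j)$ via the substitution $y_{1i}=y_{0i}+\tau_i$, and observes that the remainder is a sum of squared cluster totals of $\tau$. You instead invoke Lemma \ref{psd} and show $\tilde{\dmat}^c-\dmat$ is positive semi-definite by collapsing the four identical blocks to $(a+b)'\mathbf{J}(a+b)$ with $\mathbf{J}$ block-diagonal in rank-one all-ones blocks; since $a+b=y_1-y_0=\tau$ in the paper's signed stacking, this is literally the same quantity $\sum_g\bigl(\sum_{i\in g}\tau_i\bigr)^2\ge 0$, so the computational core coincides. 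What your packaging buys is twofold: it ties the result cleanly to the general machinery of Lemma \ref{psd} rather than ad hoc term accounting, and — more substantively — it actually verifies the \emph{identified} part of the claim (Definition \ref{def.identified.bound}), locating all $-1$ entries of $\dmat$, checking that none occur in $\dmat_{00}$ or $\dmat_{11}$ under the two-clusters-per-arm assumption, and confirming that the added block annihilates the $-1$ entries of both off-diagonal blocks. The paper's proof addresses only the bounding inequality and leaves identification implicit, so your version is the more complete of the two.
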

	
	\begin{proof}
		The quantity we are trying to bound is $\phi=2 \sum_i \sum_j y_{1i} y_{0j} \bfI \left(c_i = c_j \right)$, where $c_{i}$ gives the cluster id for the $i^{th}$ unit. The additional terms in the bound implied by $\tilde{\dmat}^c$ are 
		\begin{align*}
		\sum_i \sum_j \left(y_{0i} y_{0j}+y_{1i} y_{1j}\right) \bfI \left(c_i = c_j \right)=&\sum_i \sum_j \left(y_{1i} y_{0j}+y_{1i} y_{0j}+\tau_i \tau_j \right) \bfI \left(c_i = c_j \right)\\ = & \phi + \sum_i \sum_j \tau_i \tau_j \bfI \left(c_i = c_j \right).
		\end{align*}
		The sum $\sum_i \sum_j \tau_i \tau_j \bfI \left(c_i = c_j \right)$ must be positive for each cluster.
	\end{proof}
	
	\begin{theorem}
		For cluster randomized experiments, if the sharp null holds then $n^{-2}y'{\dmat}y=n^{-2}y'\tilde{\dmat}^c y$.
	\end{theorem}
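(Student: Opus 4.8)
The plan is to show that the matrix $\tilde{\dmat}^c-\dmat$ kills $y$ whenever the sharp null holds, so the two quadratic forms coincide term by term. First I would record that under the sharp null $y_1=y_0$, so the vector of unit-level effects $\tau:=y_1-y_0$ is identically zero. Next I would pin down the pattern of $-1$'s in $\dmat_{01}$: since its $ij$ entry is $(\pi_{1i0j}-\pi_{1i}\pi_{0j})/(\pi_{1i}\pi_{0j})$, that entry equals $-1$ exactly when $\pi_{1i0j}=0$, and under complete randomization of clusters $\pi_{1i0j}=0$ if and only if $i$ and $j$ belong to the same cluster. Hence $\bfI(\dmat_{01}=-1)$ is the $n\times n$ same-cluster indicator matrix; call it $\mathbf{s}$, with $ij$ entry $\bfI(c_i=c_j)$.

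Then I would substitute the block form $\tilde{\dmat}^c-\dmat=\begin{bmatrix}\mathbf{s}&\mathbf{s}\\\mathbf{s}&\mathbf{s}\end{bmatrix}$ together with the stacked representation $y=\begin{bmatrix}-y_0\\y_1\end{bmatrix}$ into the difference of the two quadratic forms:
\begin{align*}
n^{-2}y'\tilde{\dmat}^c y-n^{-2}y'\dmat y=n^{-2}y'(\tilde{\dmat}^c-\dmat)y=n^{-2}(y_1-y_0)'\mathbf{s}(y_1-y_0)=n^{-2}\tau'\mathbf{s}\tau.
\end{align*}
Equivalently, this is the quantity $\sum_i\sum_j\tau_i\tau_j\bfI(c_i=c_j)$ that already appears at the end of the proof of the preceding theorem. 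Under the sharp null $\tau=\mathbf{0}$, so the difference vanishes and $n^{-2}y'\dmat y=n^{-2}y'\tilde{\dmat}^c y$, which is the claim.

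The argument is a one-line computation once the bookkeeping is in place, so I do not anticipate a genuine obstacle. The only two points requiring a little care are (i) the sign convention in the stacked vector $y=[-y_0;\,y_1]$ — the minus sign on $y_0$ is precisely what turns the off-diagonal blocks into $-y_0'\mathbf{s}y_1$ and assembles the perfect square $(y_1-y_0)'\mathbf{s}(y_1-y_0)$ — and (ii) confirming that $\bfI(\dmat_{01}=-1)$ is exactly the same-cluster indicator and not some larger set of pairs; both follow directly from the definitions in Section \ref{section.framework} and the joint-assignment structure of cluster randomization. As a sanity check one can note the proof never uses positive semi-definiteness, consistent with the fact that the claimed identity (not merely an inequality) holds only under the sharp null.
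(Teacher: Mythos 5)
Your proof is correct and amounts to the same argument the paper relies on: the paper states this theorem without a separate proof because the computation in the proof of the preceding theorem already shows that the gap between the two quadratic forms is exactly $\sum_i\sum_j \tau_i\tau_j\,\bfI(c_i=c_j)$, which vanishes under the sharp null. Your matrix-form derivation, $y'(\tilde{\dmat}^c-\dmat)y=(y_1-y_0)'\mathbf{s}(y_1-y_0)$ with $\mathbf{s}$ the same-cluster indicator, is just a cleaner packaging of that identical identity, and your verification that $\bfI(\dmat_{01}=-1)$ equals $\mathbf{s}$ is the right bookkeeping step.
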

	
	\begin{theorem}
		$\tilde{\dmat}^c$ provides a tighter bound under complete randomization of clusters than $\tilde{\dmat}^\tas$ of associated with the Aronow-Samii bound.
	\end{theorem}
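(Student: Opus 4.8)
The strategy is to reduce the claim to a positive–semidefiniteness check via Corollary~\ref{tighter.bound}: since both $\tilde{\dmat}^\tas$ and $\tilde{\dmat}^c$ are obtained by adding a correction to the \emph{same} matrix $\dmat$, the difference $\tilde{\dmat}^\tas-\tilde{\dmat}^c$ is just the difference of the two corrections, and it suffices to show this difference is positive semi-definite. So the first step is to identify the sign pattern of $\dmat$ under complete randomization of clusters. Writing $\mathbf{C}$ for the $n\times n$ symmetric $0$–$1$ matrix whose $ij$ entry is $1$ exactly when units $i$ and $j$ lie in the same cluster (equivalently, block-diagonal with one all-ones block per cluster, after grouping rows by cluster), I would check directly from the partition formulas in (\ref{dmat.quad}) that: (i) $\dmat_{00}$ and $\dmat_{11}$ contain no entry equal to $-1$, because, given at least two clusters assigned to each arm, any two control (resp.\ treatment) potential outcomes can be jointly observed, so the relevant joint probabilities are strictly positive; and (ii) the $ij$ entries of $\dmat_{01}$ and $\dmat_{10}$ equal $-1$ precisely when $c_i=c_j$ (then $\pi_{1i0j}=\pi_{0i1j}=0$), while the across-cluster entries equal $\tfrac{m_1 m_0/(m(m-1))-(m_1/m)(m_0/m)}{(m_1/m)(m_0/m)}\neq-1$. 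Hence $\bfI(\dmat_{01}=-1)=\bfI(\dmat_{10}=-1)=\mathbf{C}$ and $\bfI(\dmat=-1)=\left[\begin{smallmatrix}\mathbf{0}&\mathbf{C}\\\mathbf{C}&\mathbf{0}\end{smallmatrix}\right]$.

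Next I would compute the two corrections explicitly. Since $\bfI(\dmat=-1)1_{\scriptscriptstyle 2n}$ has, in each of its two halves, $i$th entry equal to the size $n_{c_i}$ of $i$'s cluster, set $\mathbf{N}:=\mathrm{diag}(\mathbf{C}1_{\scriptscriptstyle n})$, the $n\times n$ diagonal matrix with $n_{c_i}$ in position $i$. Then the Aronow–Samii correction is $\bfI(\dmat=-1)+\mathrm{diag}(\bfI(\dmat=-1)1_{\scriptscriptstyle 2n})=\left[\begin{smallmatrix}\mathbf{N}&\mathbf{C}\\\mathbf{C}&\mathbf{N}\end{smallmatrix}\right]$, while the correction defining $\tilde{\dmat}^c$ is $\left[\begin{smallmatrix}\mathbf{C}&\mathbf{C}\\\mathbf{C}&\mathbf{C}\end{smallmatrix}\right]$. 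Subtracting and using that the $\dmat$ parts cancel gives
\[
\tilde{\dmat}^\tas-\tilde{\dmat}^c=\left[\begin{smallmatrix}\mathbf{N}-\mathbf{C}&\mathbf{0}\\\mathbf{0}&\mathbf{N}-\mathbf{C}\end{smallmatrix}\right],
\]
so the whole matrix is positive semi-definite if and only if $\mathbf{N}-\mathbf{C}$ is.

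Finally I would verify $\mathbf{N}-\mathbf{C}$ is positive semi-definite. It is block-diagonal over clusters, the block for cluster $g$ being $n_g\I_{n_g}-1_{n_g}1_{n_g}'$; its eigenvalues are $0$ (eigenvector $1_{n_g}$, multiplicity one) and $n_g$ (multiplicity $n_g-1$), so the block is positive semi-definite — equivalently $v'(n_g\I_{n_g}-1_{n_g}1_{n_g}')v=n_g\sum_i v_i^2-(\sum_i v_i)^2\ge 0$ by Cauchy–Schwarz. Alternatively, and more in the spirit of the AS-bound proof, each row $i$ of $\mathbf{N}-\mathbf{C}$ has diagonal entry $n_{c_i}-1$ and off-diagonal absolute row sum equal to $n_{c_i}-1$, so the matrix is symmetric and weakly diagonally dominant with nonnegative diagonal, whence positive semi-definite by the Gershgorin circle theorem. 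Therefore $\tilde{\dmat}^\tas-\tilde{\dmat}^c$ is positive semi-definite, and by Corollary~\ref{tighter.bound} $\tilde{\dmat}^c$ corresponds to a tighter variance bound than $\tilde{\dmat}^\tas$ under complete randomization of clusters.

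\textbf{Main obstacle.} Everything downstream of the sign-pattern identification is forced, so the only delicate point is verifying that claim precisely: that $\dmat_{00},\dmat_{11}$ contribute no $-1$'s (this is exactly where the standing assumption of at least two clusters per arm is used), and that the $-1$'s in $\dmat_{01},\dmat_{10}$ occur on the within-cluster pattern and nowhere else. Once this is nailed down, the block cancellation and the PSD check are routine.
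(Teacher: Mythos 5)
Your proof is correct, and it is more careful than the paper's own argument. The paper's proof works directly on the scalar quadratic forms: it asserts that the AS method adds $\sum_i(y_{0i}^2+y_{1i}^2)$ and then claims $\sum_{i\in g}\sum_{j\in g} y_{0i}y_{0j}\leq \sum_{i\in g} y_{0i}^2$ ``by Jensen's inequality'' for each cluster $g$. As written that inequality is false (take all $y_{0i}=1$ in a cluster of size $2$); what saves the theorem is exactly the point you identify, namely that under clustering the diagonal term $\mathrm{diag}(\bfI(\dmat=-1)1_{\scriptscriptstyle 2n})$ contributes the cluster size $n_{c_i}$, not $1$, so the correct per-cluster comparison is $\sum_{i\in g}\sum_{j\in g} y_{0i}y_{0j}\leq n_g\sum_{i\in g} y_{0i}^2$, which is Cauchy--Schwarz. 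Your route makes this explicit at the matrix level: you identify the sign pattern $\bfI(\dmat=-1)=\left[\begin{smallmatrix}\mathbf{0}&\mathbf{C}\\ \mathbf{C}&\mathbf{0}\end{smallmatrix}\right]$ (correctly using the two-clusters-per-arm assumption to rule out $-1$'s in $\dmat_{00},\dmat_{11}$), observe that the cross-arm blocks $\mathbf{C}$ cancel in $\tilde{\dmat}^\tas-\tilde{\dmat}^c$, and reduce to positive semi-definiteness of $\mathbf{N}-\mathbf{C}$, verified per cluster either spectrally or by diagonal dominance. This buys a proof that plugs directly into Corollary~\ref{tighter.bound} (so ``tighter'' holds in the paper's formal sense, for all $y\in\mathds{R}^{2n}$ simultaneously), and it repairs the gap in the published sketch. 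The only cosmetic point: you could note explicitly that the across-cluster entries of $\dmat_{01}$ equal $-1/(m-1)\neq -1$, but your displayed formula already shows this.
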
	
	\begin{proof}
		The additional terms being added by the AS method to bound the variance are $  \sum_i  \left( y_{0i}^2 +y_{1i}^2 \right)$.  So the question is whether $\sum_i \sum_j \left(y_{0i} y_{0j}+y_{1i} y_{1j}\right) \bfI \left(c_i = c_j \right) <  \sum_i  \left( y_{0i}^2 +y_{1i}^2 \right)$. To simplify, for a single cluster, $g$, consider whether $\sum_{i\in g} \sum_{j\in g} y_{0i} y_{0j} \leq \sum_{i \in g}  y_{0i}^2$. This inequality holds by Jensen's inequality. 
	\end{proof}

\section{Simulations }\label{section.sims}

Simulations illustrate the potential for efficiency gain in a hypothetical cluster-randomized experiment. 

\subsection{Data generation}

A simulated data set was created with 1000 units and 100 clusters, 40 assigned to treatment.  To create a range of cluster sizes, cluster membership, $c_i$, was determined by the equation
\begin{align*}
c_i = \text{trunc}(1+100 \times ((i-.5)/1000)^{1.2}).
\end{align*}
for $i \in \{1,2, ... , 1000\}$. A table of cluster sizes can be seen in Table 1 below. The distribution of cluster sizes is right skewed.

\begin{table}[!ht] \centering {Table 1: Cluster Sizes} \\ \footnotesize
	\begin{tabular}{c c}
		cluster size & number \\ \hline
		8 & 13 \\
		9 & 41 \\
		10 & 21 \\
		11 & 10 \\
		12 & 6 \\
		13 & 3 \\
		14 & 3 \\
		16 & 2 \\
		22 & 1 \\ \hline
		total & 100 \\
	\end{tabular}
\end{table}

Data were generated as follows:
\begin{align*}
x_{ci}=& \alpha_c + \epsilon_{xi}
\\ y_{1ci}= y_{0ci} = & -\alpha_c + x^*_{ci} + n_c -0.025 n_c^2 +\epsilon_{ i}
\end{align*}
where $x_{ci}$ is a covariate for individual $i$ in cluster $c$, $\alpha_c$ is drawn from a standard normal at the cluster level, $\epsilon_{xi}$ drawn from a standard normal at the individual level, $\epsilon_{i}$ is drawn from $N(0,5)$ at the individual level, $n_c$ is number of units in cluster $c$, and $y_{0ci}$ and $y_{1ci}$ are the potential outcomes under control and treatment, respectively. Note that $y_{1ci}=y_{0ci}$, i.e., the sharp null holds. Random components were drawn independently from one another.\footnote{To give an idea about the relative contribution of $\epsilon_{i}$ to the overall variability of $y_{0ci}$ ($y_{1ci}$), regressing $y_{0ci}$ ($y_{1ci}$) on $x_{ci}$, $\overline{x}_{c}$, $n_c$ and $n_c^2$, yielded an $R^2$ of 0.173.}  A single finite population was generated using the above DGP and maintained across all simulations.\footnote{Using alternative random number seeds does not meaningfully change the results.}

\subsection{Competing estimators}

Competing Estimators:
\begin{enumerate} 
	\item {\it WLS/OLS.} The benchmark estimator is a generalized regression estimator using the WLS with $\bpi^{-1}$ weights coefficient. This is equivalent to OLS in this case since $\pi_{1i}$ are equal for all $i$ and specification II is used. 
	\item \emph{\hththt.} The generalized regression estimator with coefficient given in Definition \ref{def.3ht}. 
	\item \emph{\rara .} The generalized regression estimator with coefficient given in Definition \ref{TSopt.definition}. 
	\item {\it OLS with cluster totals.} OLS with cluster totals as described in Definition \ref{def.ols.est}.
\end{enumerate}

All estimators used specification II.  There were four $\x$ specifications:
\begin{enumerate} 
	\item {$x$.}
	\item { $x$, and $\overline{x}_c$} 
	\item {$x$, $\overline{x}_c$ and $n_c$} 
	\item {$x$, $\overline{x}_c$, $n_c$ and ${n_c^2}$} 
\end{enumerate}

\subsection{Results}

Figure \ref{sim.res} presents the results of the simulations. Clockwise from the top left, the subfigures present the MSE, squared SE, the percent reduction in MSE (relative to WLS/OLS) and bias squared.  From left to right on the $x$-axis are the four specifications.

Results suggest that the {\hththt} has relatively poor performance overall.  In particular the MSE is very high in part due to a substantial bias.  The WLS/OLS estimator performs relatively poorly in terms of MSE for the first two specifications.  For these specifications the regression adjusted {\rara} and OLS with cluster totals performs well, obtaining an MSE that is about 60\% lower than the benchmark, WLS/OLS. For the third specification ($x$, $\overline{x}$ and $n_c$) the {\rara} performs the best, with an MSE that is about 13\% lower than the benchmark WLS/OLS.  For the final specification ($x$, $\overline{x}$, $n_c$ and $n_c^2$) the {\rara} and WLS/OLS perform about equally well, though both show evidence of model-overfit, i.e., an increase in MSE over the third specification. 

\begin{figure} 
	  \includegraphics[width=6.5in]{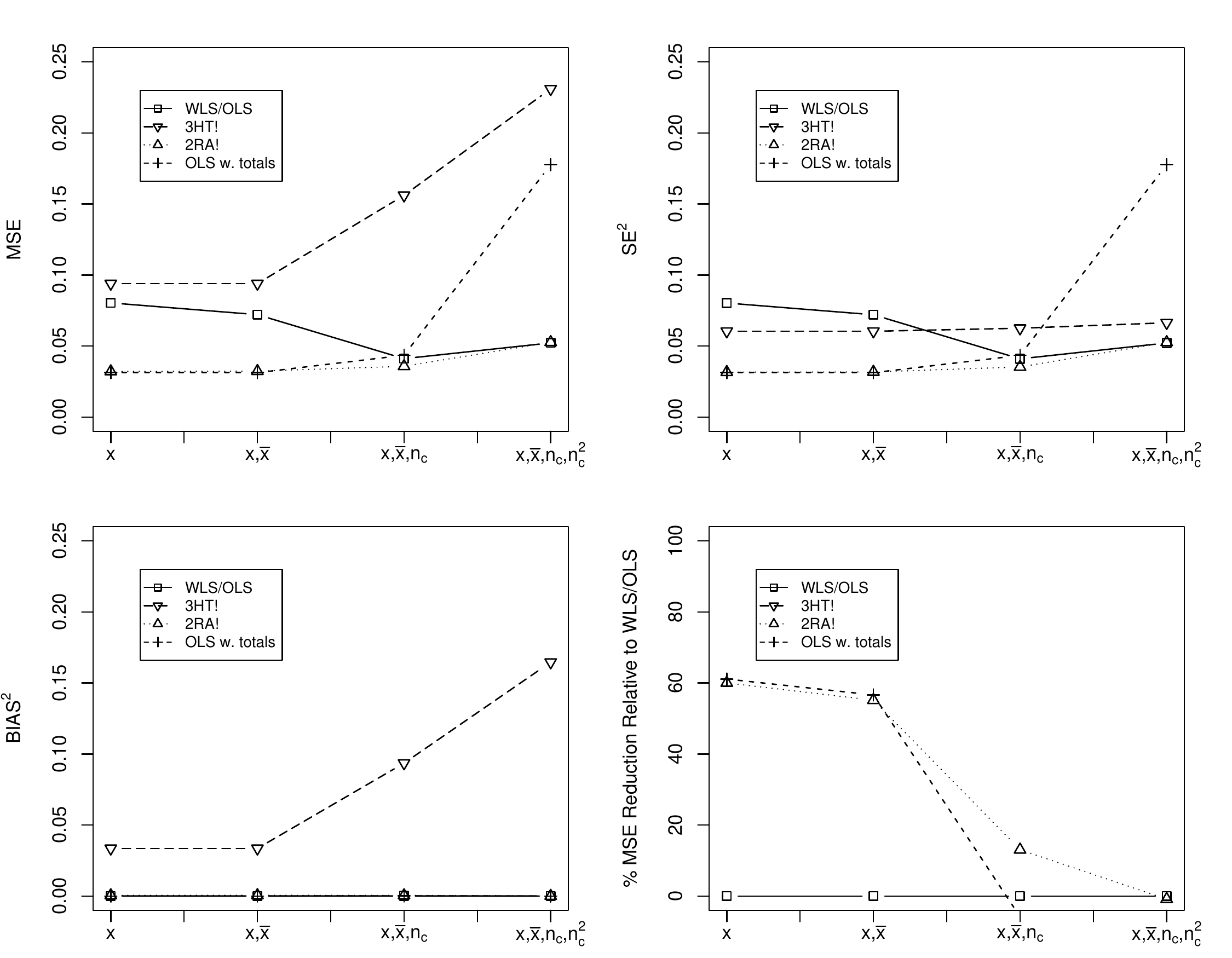}\caption{Results are from 5000 randomizations of a simulated cluster randomized experiment. Along the $x$-axis are four different covariate specifications, increasing in number of predictors from left to right. Each line depicts a coefficient estimation strategy. They are compared (clockwise from top left) in terms of MSE, SE$^2$, \% MSE Reduction and Bias$^2$.}\label{sim.res}
\end{figure}

\newpage

\pagebreak
\begin{appendix}
	\section{Notation Index}
	\begin{tabular}{| C{2cm} | L{14cm} | } \hline &
		\\ $n$ & Number of units in the finite population in the experiment
		\\ &	\\  	$1_{\scriptscriptstyle 2n}$  & Length-$2n$ column vector of 1's. In matrix notation, serves as a replacement for the more common summation symbol, $\Sigma$
		\\ & \\$y_{0i}$, $y_{1i}$ & The control and treatment potential outcomes for the $i^{th}$ unit, respectively
		\\ & \\  $y_{0}$, $y_{1}$ & Length-$n$ vectors of control and treatment potential outcomes, respetively
		\\ & \\ $y$ & Length-$2n$ vector of all potential outcomes. The first $n$ elements are control potential outcomes multiplied by $-1$, followed by the treatment potential outcomes. Multiplication of control potential outcomes by $-1$ allows for the compact representation of the ATE as the sum of the elements of this vector divided by $n$
		\\ &	\\  	$\delta$ & Average treatment effect (ATE), the parameter of interest
		\\ & \\ $R_{0i}$, $R_{1i}$ & Random indicators of the $i^{th}$ unit's assignment to control and treatment, respectively
		\\ & \\ $R_{0}$, $R_{1}$ & Length-$n$ vectors of assignment indicators for control and treatment, respectively
		\\ & \\ $\R$ &$2n \times 2n$ diagonal matrix of assignment indicators. The first $n$ diagonal elements represent the control indicators, followed by $n$ treatment indicators
		\\ & \\ $\pi_{0i}$, $\pi_{1i}$ & For the $i^{th}$ unit, the probability of assignment to control and treatment, respectively
		\\ & \\ $\pi_{0}$, $\pi_{1}$ & Length-$n$ vectors of probabilities of assignment to control and treatment, respectively
		\\ &  \\	$\bpi$ & $2n\times2n$ diagonal matrix of assignment probabilities. The first $n$ diagonal elements give the control probabilities, followed by the treatment probabilities
		\\ & \\  $\pi_{0i0j}$, $\pi_{0i1j}$,   & Joint assignment probabilities for units $i$ and $j$. For example, $\pi_{1i0j}$ is the probability that 
		\\ $\pi_{1i0j}$, $\pi_{1i1j}$    & $i$ is in treatment and $j$ is in control
		\\ & \\ $\dmat$ & $2n\times2n$ ``design" matrix that gives the variance-covariance matrix of the vector $1'_{\scriptscriptstyle 2n}\bpi^{-1}\R$. Allows for compact representation of variance of HT estimators as a quadratic in matrix form
		\\ & \\ $\dmat_{00}$, $\dmat_{01}$, & The four $n\times n$ partitions of the matrix $\dmat$. For example, the top-right partition, $\dmat_{01}$, has
		\\ $\dmat_{10}$, $\dmat_{11}$ &  $i,j$ element $\frac{\pi_{0i1j}-\pi_{0i}\pi_{1j}}{\pi_{0i}\pi_{1j}}$
		\\ & \\ $\tilde{\dmat}$ & A modified version of $\dmat$ that allows for compact representation of a variance {\it bound} for HT estimators as a quadratic in matrix form. While the variance of the HT estimator is not identified, a variance bound may be
		\\ & \\ 
		\hline
	\end{tabular}
	\begin{tabular}{| C{2cm}| L{14cm} | } \hline &		
		\\ & \\ $\matp$ & $2n\times2n$ ``probability" matrix that gives the joint assignment probabilities
		\\ & \\ $\matp_{00}$, $\matp_{01}$, & The four $n\times n$ quadrants of the matrix $\matp$. For example, $\matp_{01}$ has $ij$ element $\pi_{0i1j}$
		\\ $\matp_{10}$, $\matp_{11}$ & 		
		\\ & \\ $\tilde{\matp}$ & A modified version of $\matp$ that replaces zeros with ones. Allows for division by $\tilde{\matp}$ without division-by-zero error 
		\\ & \\ $x_i$ & Length-$k$ vector of covariates associated with the $i^{th}$ unit	
		\\ &  \\ $\x$ & An $n \times k$ matrix of covariates	
		\\ &  \\ $\tx$ & An $n \times (k+1)$ matrix representing the concatenation of an intercept vector, $1_n$, and $\x$
		\\ &  \\ $\xx$ & A $2n \times l$ matrix of covariates. The first $n$ rows are multiplied by $-1$ to mirror the vector $y$. Represents an arbitrary specification
		\\ &  \\ $\xx_{\sI}$ & A $2n \times (k+2) $ matrix of covariates. The ``common slopes" specification. Elements in the first $n$ rows are multiplied by -1 to mirror the vector $y$
		\\ &  \\ $\xx_{\II}$ & A $2n \times (2k+2)$ matrix of covariates. The ``separate slopes" specification. Elements in the first $n$ rows are multiplied by -1 to mirror the vector $y$
		\\ & \\ 
		\hline
	\end{tabular}
	\pagebreak
\section{Supplementary Proofs}
\printproofs
\end{appendix}

\end{document}